\newcommand{\rn}{\mathbb{R}^{\mathbb{N}}}
\newcommand{\rpn}{\mathbb{R}^{\mathbb{N}}_{+}}
\newcommand{\ut}{\underline{t}}
\newcommand{\be}{\begin{equation}}
\newcommand{\ee}{\end{equation}}
\newcommand{\bea}{\begin{eqnarray}}
\newcommand{\eea}{\end{eqnarray}}
\newcommand{\bean}{\begin{eqnarray*}}
\newcommand{\eean}{\end{eqnarray*}}
\newcommand{\brray}{\begin{array}}
\newcommand{\erray}{\end{array}}
\newcommand{\bdefinition}{\begin{definition}\rm}
\newcommand{\bthm}{\begin{thm}}
\newcommand{\blmma}{\begin{lmma}}
\newcommand{\bppsn}{\begin{ppsn}}
\newcommand{\bcrlre}{\begin{crlre}}
\newcommand{\bxmpl}{\begin{xmpl}}
\newcommand{\brmrk}{\begin{rmrk}\rm}
\newcommand{\edefinition}{\end{definition}}
\newcommand{\ethm}{\end{thm}}
\newcommand{\elmma}{\end{lmma}}
\newcommand{\eppsn}{\end{ppsn}}
\newcommand{\ecrlre}{\end{crlre}}
\newcommand{\exmpl}{\end{xmpl}}
\newcommand{\ermrk}{\end{rmrk}}
\newcommand{\bbc}{\mathbb{C}}
\newcommand{\bbz}{\mathbb{Z}}
\newcommand{\bbn}{\mathbb{N}}
\newcommand{\bbr}{\mathbb{R}}
\newcommand{\rp}{\mathbb{R}_{+}}
\newcommand{\rpinf}{\mathbb{R}_{+}^{\infty}}
\newcommand{\cla}{\mathcal{A}}
\newcommand{\clb}{\mathcal{B}}
\newcommand{\clf}{\mathcal{F}}
\newcommand{\clh}{\mathcal{H}}
\newcommand{\clk}{\mathcal{K}}
\newcommand{\cll}{\mathcal{L}}
\newcommand{\clr}{\mathcal{R}}
\newcommand{\clg}{\mathcal{G}}
\newcommand{\dplus}{\mathbb{R}^{d}_{+}}
\let\@wraptoccontribs\wraptoccontribs
\title{Doubly Commuting Semigroups of Isometries}
\author{C.H. Namitha$^{\dagger}$}
\address{ $^{\dagger}$ (Formerly) The Institute of Mathematical Sciences (HBNI), 4th cross street, CIT Campus, Taramani, Chennai, India, 600113}
\email{namithachanguli7@gmail.com}
\author{S. Sundar$^{\ast \dagger}$}
\address{$^{\ast \dagger}$ The Institute of Mathematical Sciences (HBNI), 4th cross street, CIT Campus, Taramani, Chennai, India, 600113}
\email{ sundarsobers@gmail.com}
\author{Shankar Veerabathiran $^{\ast}$}
\address{ $^{\ast}$ Department of Mathematics, SRM University AP, 
Amaravati, Andhra Pradesh, India, 522502}
\email{shankarunom@gmail.com}
\newtheorem{definition}{Definition}[section]
\newtheorem{prop}[definition]{Proposition}
\newtheorem{theorem}[definition]{Theorem}
\newtheorem{lemma}[definition]{Lemma}
\newtheorem{remark}[definition]{Remark}
\numberwithin{equation}{section}
\newcommand{\RNum}[1]{\uppercase\expandafter{\romannumeral #1\relax}}
\begin{document}
\maketitle

\begin{abstract}
    In this paper, we discuss the structure of doubly commuting semigroups of isometries. We record a new proof of Cooper's theorem in the Hilbert module setting. We discuss the Fell topology on the set of equivalence classes of irreducible, doubly commuting isometric representations of $\bbr_{+}^{d}$.  We show that if $d$ is finite, the topology is $T_0$. We indicate the pathologies that occur when $d=\infty$. In particular, we show that  Wold decomposition fails for isometric representations of $\bbr_{+}^{\infty}$ and prove that the Fell topology on the set of equivalence classes of irreducible, doubly commuting isometric representations of $\bbr_{+}^{\infty}$ is not $T_0$.
\end{abstract}

\noindent {\bf AMS Classification No. :} {Primary 47D03; Secondary 47A65.}  \\
{\textbf{Keywords :} Semigroups of isometries, Doubly Commuting Semigroups, Fell topology}

 \section{Introduction}
The study of semigroups of isometries (also called isometric representations)  on  Hilbert spaces is an active area of research, and they can be studied from various perspectives leading to a rich interaction between several branches of functional analysis. For example, the role of the Hardy space $H^2(\mathbb {D})$ and the associated complex analysis tools in the study of the shift operator $S$ on $\ell^2(\bbn_0)$ is well known (see \cite{Nagy_Foias_new}). Similary,  the $C^{*}$-algebra associated with $S$, i.e. the Toeplitz algebra $\mathcal{T}$ plays a crucial role in Cuntz' approach (\cite{Cuntz_Bott}) to Bott periodicity in $K$-theory. 
Over the last couple of decades, there have been many papers on $C^{*}$-algebras associated with discrete semigroups (we merely cite \cite{Li_Oberwolfach} and refer the reader to the references therein) and also on its representation theory, or in other words isometric representations of discrete semigroups.  Isometric representations in the continuous case and the associated $C^{*}$-algebras have also been studied by many authors (\cite{Arveson_semigroup}, \cite{Bhat_Skeide},   \cite{Vijay_Kumar}, \cite{Renault_Muhly},  \cite{Piyasa_NYJM},  \cite{Sundar_Ore}). 

 In this paper, we revisit some aspects of isometric representations in the continuous case. We focus only on the doubly commuting semigroups of isometries of  $\bbr_{+}^{d}$ and $\bbr_{+}^{\infty}$. The goal of this paper is to give a self contained exposition to the structure of doubly commuting semigroups of isometries.  We do not take recourse to $C^{*}$-algebras  but adopt  a purely operator theoretic viewpoint. For, we believe that it is interesting to study isometric representations of semigroups that are not locally compact (like the case of $\bbr_{+}^{\infty}$ and $\ell^2_{+}$), where the operator algebraic techniques may not work.  Hence, we think that it will be of some use to record our approach for future reference.

  We start with a new proof of Cooper's theorem in the Hilbert module setting. 
Cooper's theorem is a fundamental result in the theory of semigroups of isometries, and it asserts that if $\{V_t\}_{t \geq 0}$ is a strongly continuous, pure isometric representation of $\bbr_{+}$ on a Hilbert space $\clh$, then $\clh \cong L^{2}(\bbr_{+}) \otimes \cll$ for some Hilbert space $\cll$ and $V_t \cong S_t \otimes 1$, where $\{S_t\}_{t \geq 0}$ is the usual shift semigroup. There are many proofs of this result. The older proofs (\cite{Cooper}, \cite{Masani}) rely on the theory of unbounded operators which is delicate with domain issues. There are also  operator algebraic proofs (\cite{Sundar_Cooper}, \cite{Zacharias}), and this demands building enough machinery (crossed products/groupoids/Stone-von Neumman theorem) a priori to force the proof.  
The proof that we present in this paper relies only on basic operator theory and functional analysis. Another such proof can be found in \cite{Bhat_Skeide}.  We believe that our proof is simple and will be of independent interest. Moreover, our proof is extremely analogous to the proof in the discrete case.

 The main ideas  are explained next. First, let us recall the proof in the discrete case for the structure of a single isometry $V$ on a Hilbert space $\clh$ which is pure, i.e. $V^{*n} \to 0$ in SOT as $n \to \infty$.   The proof has three steps, and they are follows: 
\begin{enumerate}
    \item[\textbf{Step} 1:] Set 
\[
K_V:=\{\xi \in \clh: V^{*n}\xi=0 \textrm{~for all $n \geq 1$}\}.\]
We first observe that $K_V \neq 0$.
\item[\textbf{Step} 2:]  In this step, we show that the map 
\[
U: \ell^2(\bbn_0) \otimes K_V \ni \delta_n \otimes \xi  \to V^n\xi \in \clh
\]
is an isometry by computing inner products on a total set of vectors. Moreover, by definition, it follows that 
\[
U(S \otimes 1)=VU.\]
Also, the range of $U$ is reducing for $\{V^n: n \geq 1\}$. 
\item[\textbf{Step} 3:] Consider the restriction $W:=V|_{(U\clh)^{\perp}}$ which is again a pure isometry by hypothesis. Applying Step 1 to $W$, we get  $0 \neq K_W \subset K_V \subset U\clh$ which is a contradiction unless $(U\clh)^{\perp}=0$. Hence, $U$ is a unitary, and now the proof is over. 

\end{enumerate}

We cannot adapt the above strategy step by step for a $1$-parameter strongly continuous semigroup of isometries $V=\{V_t\}_{t \geq 0}$ as $L_V=0$ in this case as $V$ is strongly continuous. The modification required is that rather than seeking a common eigenvector corresponding to the eigenvalue $0$, we look for a common eigenvector corresponding to a non-zero eigenvalue (or a non-zero character in the continuous case).  Then, the steps are as follows.
Let $V=\{V_t\}_{t \geq 0}$ be a strongly continuous pure (i.e. $V_tV_t^* \to 0$ in SOT as $t \to \infty$) semigroup of isometries on a Hilbert space $\clh$. 
\begin{enumerate}
    \item[(1)] Let $z$ be a complex number such that $Re(z)>0$. Set 
    \[
    K_{z}:=\{\xi \in H: V_{t}^{*}\xi=e^{-zt}\xi \textrm{~for all $t>0$}\}. 
    \]
    We show that $K_z$ is non-zero by appealing to the discrete case, and by an averaging trick. 
    \item[(2)] Let $\{S_t\}_{t \geq 0}$ be the shift semigroup on $L^2((0,\infty))$, and set $f_z(t):=\sqrt{2 Re(z)}e^{-zt}$. Then, it is straightforward to compute inner products on the total set  $\{S_tf_z \otimes \xi: t \geq 0, \xi \in K_z\}$ to deduce that there exists an isometry 
$    U:L^2((0,\infty)) \otimes K_z \ni S_tf_z \otimes  \xi \to V_t\xi \in \clh$ such that 
\[
U(S_t\otimes 1)=V_tU
\]
for every $t \geq 0$. The rest of the arguments are exactly the same as in the discrete case. Note that the role played by `$\delta_0$' in the discrete case is now played by $f_z$ in the continuous case. 
\end{enumerate}
We generalise the above to the doubly commuting case and to the Hilbert module setting.  Here, we consider strictly continuous semigroups as the strict topology is a more appropriate topology for Hilbert modules. The module version creates a few issues at Step 3 but the main idea remains the same. 
Thus, we obtain a  proof of the following theorem; the case $d=1$ is first due to Bhat and Skeide (\cite{Bhat_Skeide}) in the module setting (see also \cite{Sundar_Cooper} for a groupoid proof).

\begin{theorem}
\label{main_intro}
Let $d \geq 1$ be a natural number. Let $\cla$ be a separable $C^{*}$-algebra, and let $E$ be a countably generated Hilbert $\cla$-module. Suppose $V=\{V_{\underline{t}}\}_{\underline{t} \in \bbr_{+}^{d}}$ is a  strictly continuous semigroup of isometries which is doubly commuting, i.e. for $i \neq j$, \[V_{te_i}V_{se_j}^{*}=V_{se_j}^{*}V_{te_i}\] for $s,t \in \bbr_{+}$. Here, $\{e_j\}_{j=1}^{d}$ is the standard basis for $\bbr^d$. Suppose $V$ is strongly pure, i.e. for every $i \in \{1,2,\cdots, d\}$, $\displaystyle \lim_{t \to \infty}V_{te_i}V_{te_i}^{*}=0$ in the strict topology.   Then, there exists a Hilbert $\cla$-module $K$ and a unitary $U:L^2(\bbr_{+}^{d}) \otimes K \to E$ such that 
\[
U(S_{\underline{t}}\otimes 1)U^{*}=V_{\underline{t}}
\]
for every $\underline{t} \in \bbr_{+}^{d}$, where  $\{S_{\underline{t}}\}_{\underline{t} \in \bbr_{+}^{d}}$ is the shift semigroup on $L^2(\bbr_{+}^{d})$ defined by \begin{align}S_{\underline{t}}f(\underline{x}):=\begin{cases}
		f(\underline{x}-\underline{t}), & \textrm{ if  $\underline{x}-\underline{t} \in \bbr_+^d$,} \\
			0 & \textrm{otherwise}.
		\end{cases}
		  \end{align}
        %  The tensor product $L^{2}(\bbr_{+}^{d})\otimes L$ is the external tensor product. 
\end{theorem}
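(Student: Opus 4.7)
The plan is to follow the three-step strategy of the introduction in its multi-parameter Hilbert module incarnation. Fix once and for all $\underline{z}=(z_1,\dots,z_d)\in\bbc^{d}$ with $\mathrm{Re}(z_i)>0$ for every $i$ and define the joint eigenspace
\[
K_{\underline{z}}:=\{\xi\in E:V_{te_i}^{*}\xi=e^{-z_it}\xi\text{ for all }i\in\{1,\dots,d\},\ t>0\},
\]
a closed $\cla$-submodule of $E$. Set $f_{\underline{z}}(\underline{x}):=\prod_{i=1}^{d}\sqrt{2\mathrm{Re}(z_i)}\,e^{-z_ix_i}$, a unit vector in $L^{2}(\bbr_{+}^{d})$. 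The target unitary is $U:L^{2}(\bbr_{+}^{d})\otimes K_{\underline{z}}\to E$ defined on the total set by $S_{\underline{t}}f_{\underline{z}}\otimes\xi\longmapsto V_{\underline{t}}\xi$.

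The core of the argument, and the step I would spend most effort on, is showing $K_{\underline{z}}\neq 0$. I would handle this one coordinate at a time, using double commutation to make successive constructions compatible. For a single coordinate $i$, strong purity makes $V_{e_i}$ a pure isometry, so the module version of the discrete Wold decomposition gives $\ker V_{e_i}^{*}\neq 0$. For a nonzero $\zeta\in\ker V_{e_i}^{*}$, form
\[
\tilde T_i\zeta:=\int_{0}^{\infty}e^{-z_is}V_{se_i}\zeta\,ds+\int_{0}^{1}e^{z_iu}V_{ue_i}^{*}\zeta\,du,
\]
the integrals converging in the strict topology since $\mathrm{Re}(z_i)>0$. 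Splitting the range of integration at $s=t$ and using $V_{se_i}^{*}\zeta=0$ for $s\geq 1$, a direct computation gives $V_{te_i}^{*}(\tilde T_i\zeta)=e^{-z_it}\tilde T_i\zeta$ for every $t\geq 0$. Nontriviality of $\tilde T_i\zeta$ comes from an explicit matrix coefficient calculation: composing $\langle\tilde T_i\zeta,\zeta\rangle_{\cla}$ with a state $\phi$ on $\cla$ chosen so that $\phi(\langle\zeta,\zeta\rangle)>0$ and taking $z_i$ real and sufficiently large, the strict continuity of $s\mapsto\langle V_{se_i}\zeta,\zeta\rangle$ at $s=0$ combined with its vanishing for $s\geq 1$ forces the resulting scalar integral to be positive (by an initial value estimate). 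Iteration in the remaining coordinates succeeds because, for $j\neq i$, double commutation gives $V_{se_j}V_{te_i}^{*}=V_{te_i}^{*}V_{se_j}$ and likewise with $V_{se_j}^{*}$, so the partial joint eigenspace built over the first $k$ coordinates is invariant under the remaining $d-k$ semigroups and their adjoints; strict continuity and strong purity are inherited on this invariant submodule, so the one-coordinate construction applies again.

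With $K_{\underline{z}}\neq 0$, the remainder is largely a formal verification. A coordinatewise inner product computation yields
\[
\langle S_{\underline{t}}f_{\underline{z}},S_{\underline{s}}f_{\underline{z}}\rangle_{L^{2}}\,\langle\xi,\eta\rangle_{\cla}=\langle V_{\underline{t}}\xi,V_{\underline{s}}\eta\rangle_{\cla}
\]
for $\xi,\eta\in K_{\underline{z}}$ and all $\underline{t},\underline{s}\in\bbr_{+}^{d}$, by reducing each coordinate to the one-parameter case and using the eigenvalue relation $V_{re_i}^{*}\xi=e^{-z_ir}\xi$. This makes $U$ an adjointable isometry of Hilbert $\cla$-modules with $U(S_{\underline{t}}\otimes 1)=V_{\underline{t}}U$. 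For surjectivity, double commutation together with the eigenvalue relation gives $V_{\underline{s}}^{*}V_{\underline{t}}\xi=e^{-\underline{z}\cdot\underline{s}'}V_{\underline{t}'}\xi$ for $\xi\in K_{\underline{z}}$ (with $t_i'=\max(t_i-s_i,0)$ and $s_i'=\max(s_i-t_i,0)$), so $\mathrm{ran}\,U$ is invariant under both $V_{\underline{t}}$ and $V_{\underline{t}}^{*}$. Hence $F:=(\mathrm{ran}\,U)^{\perp}$ is an orthogonally complementable submodule on which $V$ restricts to a strictly continuous, doubly commuting, strongly pure isometric semigroup. If $F\neq 0$, the Step 1 construction applied to $V|_F$ produces a nonzero joint eigenspace inside $F$, which by its very definition sits inside $K_{\underline{z}}\subset\mathrm{ran}\,U$, contradicting $F\cap\mathrm{ran}\,U=0$; thus $F=0$ and $U$ is unitary. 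The principal obstacle throughout is the nonvanishing in Step 1: the averaging identity itself is a short manipulation, but showing $\tilde T_i\zeta\neq 0$ in the module setting and closing the coordinatewise iteration via double commutation is the delicate part.
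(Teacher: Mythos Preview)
Your overall architecture is right, and the inner-product computation defining $U$ together with the reducing property of $L_{\underline{z}}=\operatorname{ran}U$ goes through as you describe. The genuine gap is in Step~1, specifically in the nonvanishing of $\tilde T_i\zeta$.

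Your eigenvalue identity $V_{te_i}^{*}\tilde T_i\zeta=e^{-z_it}\tilde T_i\zeta$ is correct, but the nonvanishing argument you sketch does not work. You propose to look at $\phi(\langle\tilde T_i\zeta,\zeta\rangle)$ for a state $\phi$ and take $z_i$ real and large, using that $s\mapsto\langle V_{se_i}\zeta,\zeta\rangle$ is continuous at $0$ and vanishes for $s\geq 1$. That controls the first integral $\int_0^\infty e^{-z_is}\langle V_{se_i}\zeta,\zeta\rangle\,ds$, but it says nothing about the second one, $\int_0^1 e^{z_iu}\langle V_{ue_i}^{*}\zeta,\zeta\rangle\,du$, whose weight $e^{z_iu}$ blows up as $z_i\to\infty$; there is no ``initial value estimate'' here, and the sign of the sum is not forced. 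In fact, already in the model $E=L^2(\bbr_+)$ with $V=S$, one computes $\tilde T_i\zeta=\bigl(\int_0^1 e^{z u}\zeta(u)\,du\bigr)\,e^{-zx}$ for $\zeta\in\ker S_1^{*}=L^2([0,1))$, so $\tilde T_i$ annihilates a codimension-one subspace of $\ker V_{e_i}^{*}$: you cannot take $\zeta$ arbitrary, and nothing in your argument rules out landing in that kernel, especially after the first step of the coordinate iteration when $\zeta$ is further constrained to lie in a partial eigenspace. A second, related gap is that even if your construction produced a nonzero eigenvector, it would only do so for some $\underline z$ depending on $\zeta$; you then use the \emph{same} $\underline z$ in Step~3, which requires knowing $K_{\underline z}\neq 0$ on $F$ for that fixed $\underline z$. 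You need an intermediate lemma of the form ``$K_{\underline z_0}\neq 0$ for one $\underline z_0$ implies $K_{\underline z}\neq 0$ for all $\underline z$'', which you do not supply.

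The paper's route avoids both issues. For nonvanishing it first forms the \emph{discrete} eigenvector $\eta=\sum_{\underline n\in\bbz_+^d}e^{-\langle\underline z,\underline n\rangle}V_{\underline n}\xi$ with $\xi\in\bigcap_i\ker V_{e_i}^{*}$, which is manifestly nonzero by orthogonality; it then shows, via a Fourier decomposition of the $\bbn^d$-periodic rescaled semigroup $T_{\underline t}=e^{\langle\underline z,\underline t\rangle}V_{\underline t}^{*}$ on the discrete eigenspace, that some projection $P_{\underline m}\eta$ lies in $K_{\underline z+2\pi i\underline m}$ and is nonzero. A separate lemma (embedding the cyclic module generated by a single eigenvector as a copy of the shift, where all $K_{\underline z}$ are visibly nonzero) then upgrades this to $K_{\underline z}\neq 0$ for every $\underline z$. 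Finally, rather than your Step~3 contradiction, the paper proves $E=L_{\underline z}$ directly: it shows $L_{\underline z}$ is independent of $\underline z$, hence contains the full discrete eigenspace and therefore $\bigcap_i\ker V_{e_i}^{*}$, and then the discrete Wold decomposition $E=\bigoplus_{\underline n}V_{\underline n}\bigl(\bigcap_i\ker V_{e_i}^{*}\bigr)$ finishes. Your Step~3 would also work once Step~1 is fixed to hold for all $\underline z$, but as written the argument is circular at exactly the point you flag as ``the delicate part''.
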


The above theorem together with an application of Wold decomposition gives a complete list of irreducible doubly commuting isometric representations of $\bbr_{+}^{d}$ on Hilbert spaces. For example, for $d=2$, the set of irreducible doubly commuting isometric representations can be identified with $\bbr^{2} \sqcup \bbr \sqcup \bbr \sqcup \{\star\}$, where $\star$ corresponds to the shift semigroup on $L^2(\bbr_{+}^{2})$. The above forms the content of Section 2. 

In the remaining sections, we focus our attention on the study of the Fell topology on the set of irreducible doubly commuting isometric representations of $\bbr_{+}^{d}$ on Hilbert spaces when $d$ is finite as well as when $d$ is infinite.  To understand how complex the representation theory of a group $G$ (or a $C^{*}$-algebra $A$), it is classical to consider the dual $\widehat{G}$ (or $\widehat{A}$) which is the set of equivalence classes of irreducible representations of $G$ with the Fell topology. Whether the topology on $\widehat{G}$ is good or pathological indicates whether the representation theory of $G$ is `tractable' (type I) or `wild'. We consider a similar notion for isometric representations of $\bbr_{+}^{d}$ and $\bbr_{+}^{\infty}$. We show that if restrict attention to the doubly commuting case, then the topology is $T_0$ for $\bbr_{+}^{d}$ when $d<\infty$. However,   the topology is not $T_0$ for $\bbr_{+}^{\infty}$. In fact, by considering the analogue of shift semigroups on subspaces of $L^2(\bbr^\bbn,\bigotimes_{n=1}^{\infty}\gamma)$ (where $\gamma$ is the standard  Gaussian measure),  we construct a continuum of irreducible, strongly pure doubly commuting isometric representations which are mutually inequivalent but whose equivalence classes have the same closure in the Fell topology.  This says that the representation theory of $\bbr_{+}^{\infty}$ is `wild'. 

We must mention here that in the finite case, i.e. the case of $\bbr_{+}^{d}$ with $d<\infty$, the Fell topology can be analysed via $C^{*}$-algebraic methods by considering the associated Wiener-Hopf algebra (see Thm. 7.8 of \cite{Sundar_Ore}). This is quite well known for $d=1$. However, up to the authors' knowledge, the details have not been explicitly spelt out for $d>1$.  We chose to work directly with the isometric representations and avoid the language of $C^{*}$-algebras. One reason is that $C^{*}$-algebraic techniques do not adapt well in the non-locally compact space, and for semigroups like $\bbr_{+}^{\infty}$ that are not locally  compact. 

As yet another pathology, we show by an example that doubly commuting isometric representations of $\bbr_{+}^{\infty}$ need not admit Wold decomposition. 

The organisation of this paper is described below.

After this introduction, in Section 2, we record our new proof of Cooper's theorem, and prove Thm. \ref{main_intro}.  In Section 3, we consider the Fell topology on the set of equivalence classes of irreducible isometric representations of a semigroup $S$. We follow \cite{Ernest} and make use of railway representations to define the Fell topology. We show that if we restrict  to the doubly commuting case and when $S=\bbr_{+}^{d}$ with $d$ finite, then the Fell topology is $T_0$. In Section 4, we indicate the pathologies that appear when $d=\infty$.  We prove that the Fell topology on the set of equivalence classes of irreducible doubly commuting isometric representations of $\bbr_{+}^{\infty}$ is not $T_0$. We also give an example of a doubly commuting isometric representation of $\bbr_{+}^{\infty}$ which does not have a  Wold decomposition.

\textbf{Notation:} 
\begin{itemize}
    \item For a measure space  $(X,\clb,\mu)$, and for  $\phi \in L^{\infty}(X,\clb,\mu)$, $M_\phi$ stands for the multiplication operator on $L^2(X,\mu)$ defined by 
    \[
    M_\phi(f)=\phi f.
    \]
   % \item
    \item The Hilbert spaces  that we consider are assumed to be separable.
    \item  We denote $\bbn=\{1,2,\cdots\}$, $\bbn_0=\bbn \cup \{0\}$,  and $\bbr_{+}=[0,\infty)$.
        \item We set $\bbr^\infty=\{(x_n) \in \bbr^\bbn: \textrm{$x_n=0$ eventually}\}$ and   endow $\bbr^\infty$ with the inductive limit topology. For $d \in \{1,2,\cdots\} \cup \{\infty\}$, $\{e_i\}_{i=1}^{d}$ denotes the standard basis for $\bbr^d$.
  \end{itemize}

 \section{Cooper's theorem}

Let $\cla$ be a $C^{*}$-algebra, and let $E$ be a Hilbert $\cla$-module. For  $d \in \mathbb{N}$, let   $I_d=\{1,2, , \cdots , d\}$.  Let  
$T=\{T_{ \underline{t} }\}_{ \underline{t} 
\in \bbr^d_+ }$ be a strictly continuous semigroup of adjointable operators (also called a strictly continuous representation of $\bbr_{+}^{d}$) on $E.$ For  each $i \in I_d$ and $t \in \bbr _+,$   denote $T_{t\underline{e}_i}$ by $T_t^{(i)},$  where $e_i$ is the $i$-th standard basis vector in $\bbr^d$. A strictly continuous representation  $T$ is said to be \emph{doubly commuting} if, for all  distinct  $i,j \in I_d,$  $$T^{(i)}_{s}T_{t} ^{(j)*}=T_{t} ^{(j)*}T^{(i)}_{s} \ \mbox{for all }\, s,t \in \bbr_+  .    $$
The strictly continuous semigroup $T$ is said to be   strongly pure if $\displaystyle \lim _{t \rightarrow \infty}T_{t} ^{(i)*} = 0 $ in the strong operator topology (SOT) for all $i \in I_d.$  The representation $T$ is  said to be isometric if $T_{\underline{t}}$ is an isometry for all $\underline{t} \in \bbr_+^d.$
Since  we consider only strictly (strongly)  continuous isometric representations on Hilbert modules (Hilbert spaces), we often drop the adjective `strictly (strongly) continuous' and merely call them isometric representations.

For  $d \in \mathbb{N},$  define an isometric representation $S:=\{S_{\underline{t}}\}_{\underline{t} \in \bbr_+^d}$ on the Hilbert  space  $L^2(\bbr_+^d)$ by 
\begin{align}S_{\underline{t}}f(\underline{x}):=\begin{cases}
		f(\underline{x}-\underline{t}), & \textrm{ if  $\underline{x}-\underline{t} \in \bbr_+^d$,} \\
			0, & \textrm{otherwise}.
		\end{cases}
		  \end{align}
    It is straightforward to verify that $S$ is doubly commuting and for each $i \in I_d$, $\{S_{t}^{(i)}\}_{t \geq 0}$ is pure. We call $S=\{S_{\ut}\}_{\ut \in \bbr_+^d}$ the shift semigroup of $\bbr_{+}^{d}$ on $L^2(\bbr_+^d)$. Let $$\mathbb{H}^d_+:= \{ \underline{z} \in \bbc^d \, : \, Re(z_i)>0 \textrm{~for all $i \in I_d$}\}.$$ For $\underline{z} \in \mathbb{ H}^d_+,$ define the unit vector  $f_{\underline{z}}$  in $L^2(\bbr_+^d)$ by 
 \begin{align}\label{SO}
 f_{\underline{z}}(\underline{x})=  \sqrt{2^d \prod_{i \in I_d} Re(z_i)} e^{-\langle \underline{z},\underline{x}\rangle }.
 \end{align}Observe that, for  $\underline{t} \in \bbr_{+}^d,$
 \begin{align}\label{S5}
 S_{\underline{t}}^*f_{\underline{z}}=e^{-\langle \underline{z},\underline{t} \rangle } f_{\underline{z}}.
 \end{align} 
 The vector $f_{\underline{z}}$ and the above equation will play a crucial role in what follows.

 Let $\cla$ be a separable $C^{*}$-algebra, and consider a Hilbert  $\mathcal{A}$-module $K$ which we assume is countably generated.  Let $L^2(\bbr_+^d)  \otimes K$ denote the   external tensor product  of  the Hilbert modules $L^2(\bbr_+^d)$ (viewed as a Hilbert $\mathbb{C}$-module) and $K$.  The representation $S \otimes 1_{K} = \{S_{\underline{t}} \otimes 1\}_{\underline{t} \in \bbr_+^d}$   is a  doubly commuting isometric representation of $\bbr_{+}^{d}$ on the Hilbert $\mathcal{A}$-module $L^2(\bbr_+^d) \otimes K$. Moreover, it is strongly pure.   In this section,  we  will demonstrate  that every  doubly commuting isometric representation  $V=\{V_{ \underline{t}}\}_{ \underline{t} \in \bbr^d_+ }$ on a Hilbert $\mathcal{A}$-module $E$ which is strongly pure   is unitarily equivalent to the representation $S \otimes 1_{K}=\{S_{\underline{t}} \otimes 1_K\}_{\underline{t} \in \bbr_+^d}$ for some Hilbert $\cla$-module $K$.

 The proof of the following proposition is included for completeness as it is probably known. 
 \begin{prop} \label{PP1}
    Let $T=\{T_{\underline{t}}\}_{\underline{t} \in \mathbb{R}_+^{d }} $ be a semigroup of bounded linear operators on a separable Banach space $E$. Suppose that $T$ is weakly continuous, i.e. for every $\phi \in E^{*}$, the map 
    \[
    \bbr_{+}^{d} \ni \ut \to \phi(T_{\ut}\xi) \in \bbc\]
    is continuous for every $\xi \in E$. Assume that $T$ is $\mathbb{N}^d$-periodic,  i.e. $T_{\underline{t}+\underline{n}}=T_{\underline{t}}$ for all $\underline{n}\in \mathbb{N}^{d }.$  For $\underline{n}
    \in \mathbb{Z}^d,$ let  \[\ E_{\underline{n}}: = \{\xi \in E :  T_{\underline{t}} \xi = e^{2\pi i \left <\underline{n}, \underline{t}\right > }\xi  \,\, \mbox{for all} \,\, \underline{t} \in \mathbb{R}_+^{d}\}.\] Then, 
    \[E= span   \overline { \{E_{\underline{n}} \, : \, \underline{n} \in \mathbb{Z}^d  \}}.\]
    In particular, $E_{\underline{n}}$ is non-zero for some $\underline{n} \in \bbz^d$. 
    \end{prop}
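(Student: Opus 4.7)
The plan is to exploit the $\bbn^d$-periodicity to view $T$ as a (weakly continuous) representation of the compact abelian group $\bbt^d=\bbr^d/\bbz^d$, and then to do Fourier analysis in its characters. Setting $\underline{t}=\underline{0}$ in the relation $T_{\underline{t}+\underline{n}}=T_{\underline{t}}$ gives $T_{\underline{n}}=T_{\underline{0}}=I$ for every $\underline{n}\in\bbn^d$, and consequently $T_{\underline{t}}T_{\underline{1}-\underline{t}}=T_{\underline{1}}=I$ for $\underline{t}\in[0,1)^d$, so each $T_{\underline{t}}$ is invertible. Thus $\{T_{\underline{t}}\}$ factors through a weakly continuous representation of $\bbt^d$, and the subspaces $E_{\underline{n}}$ are exactly the isotypical components for the character $\underline{t}\mapsto e^{2\pi i\langle\underline{n},\underline{t}\rangle}$.

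For each $\underline{n}\in\bbz^d$ and $\xi\in E$, the natural candidate for the spectral projection onto $E_{\underline{n}}$ is
\[
P_{\underline{n}}\xi:=\int_{[0,1]^d} e^{-2\pi i\langle\underline{n},\underline{t}\rangle}\,T_{\underline{t}}\xi\,d\underline{t},
\]
interpreted as a Bochner integral in $E$. To justify existence, I would combine the weak continuity of $\underline{t}\mapsto T_{\underline{t}}\xi$ with the separability of $E$ to invoke Pettis's theorem and obtain strong measurability; two successive applications of the uniform boundedness principle (first to $\{T_{\underline{t}}\xi\}_{\underline{t}\in[0,1]^d}$ for fixed $\xi$, then to $\{T_{\underline{t}}\}_{\underline{t}\in[0,1]^d}$) yield $\sup_{\underline{t}\in[0,1]^d}\|T_{\underline{t}}\|<\infty$, so the integrand is Bochner integrable. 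A change of variable using the $\bbn^d$-periodicity shows that $T_{\underline{s}}P_{\underline{n}}\xi=e^{2\pi i\langle\underline{n},\underline{s}\rangle}P_{\underline{n}}\xi$ for every $\underline{s}\in\bbr_+^d$, i.e.\ $P_{\underline{n}}\xi\in E_{\underline{n}}$.

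The argument closes by Hahn-Banach. Set $F:=\overline{\mbox{span}\{E_{\underline{n}}:\underline{n}\in\bbz^d\}}$ and suppose $F\neq E$. Choose $\phi\in E^{*}$ vanishing on $F$ with $\phi(\xi_0)\neq 0$ for some $\xi_0\in E$. The scalar function $g(\underline{t}):=\phi(T_{\underline{t}}\xi_0)$ is continuous on $\bbt^d$, and its $\underline{n}$-th Fourier coefficient equals $\phi(P_{\underline{n}}\xi_0)$, which vanishes since $P_{\underline{n}}\xi_0\in E_{\underline{n}}\subset F$. By the uniqueness of Fourier series on $\bbt^d$ (equivalently, the Stone-Weierstrass density of trigonometric polynomials in $C(\bbt^d)$), $g\equiv 0$, whence $\phi(\xi_0)=g(\underline{0})=0$, a contradiction. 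The \emph{in particular} assertion is then immediate, since if $E\neq 0$ at least one $E_{\underline{n}}$ must be non-zero for their closed linear span to exhaust $E$.

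The main technical hurdle is the rigorous handling of the vector-valued integral given only weak continuity of $T$; the separability hypothesis on $E$ is essential here, precisely so that Pettis's theorem promotes weak measurability to strong measurability and the Bochner integral is legitimate. Everything else is a formal consequence of Fourier analysis on the compact torus.
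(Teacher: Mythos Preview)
Your proof is correct and follows essentially the same route as the paper's: define the spectral projections $P_{\underline{n}}\xi=\int_{[0,1]^d}e^{-2\pi i\langle\underline{n},\underline{t}\rangle}T_{\underline{t}}\xi\,d\underline{t}$, check that $P_{\underline{n}}\xi\in E_{\underline{n}}$ by a periodicity-based change of variable, and then close with the Hahn--Banach/Fourier-uniqueness argument applied to the continuous scalar function $\underline{t}\mapsto\phi(T_{\underline{t}}\xi)$. The only notable difference is that you are more scrupulous about the existence of the vector-valued integral (invoking Pettis's theorem via separability and uniform boundedness), a point the paper takes for granted; your preliminary observation that $T$ factors through $\bbt^d$ is pleasant context but not strictly needed for the argument.
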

 \begin{proof}
For $\underline{n} \in \mathbb{Z}^d$, define a linear operator  $P_{\underline{n}}$ on $E$ by
\[P_{\underline{n}}:= \int_{[0, 1]^d }e^{-2\pi i \left < \underline{n}, \underline{t}\right > } T_{\underline{t}}   \,d\underline{t}.\]
 Since $T$ is periodic,  we have for   $\xi \in E $ and $\underline{s} \in \mathbb{R}_+^{d },$ $$T_{\underline{s} }P_{\underline{n}}\xi = \int_{[0, 1]^d }e^{-2\pi i \left < \underline{n},\underline{t}\right > } T_{\underline{t}+\underline{s}}\xi   \,d\underline{t}=e^{2\pi i \left <\underline{n},\underline{s}\right > } \int_{[0, 1]^d }e^{-2\pi i \left < \underline{n},\underline{t}\right > } T_{\underline{t}} \xi  \,d\underline{t}=e^{2\pi i \left <\underline{n},\underline{s}\right > }P_{\underline{n}}\xi.$$ 
Therefore, $Ran(P_{\underline{n}}) \subset E_{\underline{n}}$.  It is clear that if $\xi \in E_{\underline{n}}$, then $P_{\underline{n}}\xi=\xi$. Hence,    $E_{\underline{n}}= Ran (P_{\underline{n}})$ for all $\underline{n}
 \in \mathbb{Z}^d$. %Therefore, it is enough to  show  $P_{\underline{n} } \neq 0$ for some $\underline{n} \in \mathbb {Z}^d.$ 
 Next, we prove that $span \{ E_{\underline{n}} \, : \, \underline{n} \in \mathbb{ Z}^d\}$
 is dense in $E.$ If not, then  there exists a non-zero linear functional $\phi \in E^*$  that vanishes on $span \{ E_{\underline{n}} \, : \, \underline{n} \in \mathbb{ Z}^d\}.$ Then, for every $\underline{n} \in \mathbb{Z}^d$ and $\xi \in E,$  $$\int_{[0,1]^d} e^{-2\pi i \left < \underline{n},\underline{t}\right > }\phi(T_{\underline{t}}\xi)\, d\underline{t}=\phi(P_{\underline{n}} \xi)=0. 
 $$
 Since $T$ is weakly continuous, and since the Fourier coefficients of  the continuous function \[ [0,1]^{d} \ni \underline{s} \mapsto  \phi(T_{\underline{s}} \xi)  \in \mathbb{C}\]   vanish for every  $\xi \in E,$  $ \phi(T_{\underline{s}} \xi)=0$ for every $\underline{s} \in [0,1 ]^{d}$ and $\xi \in E.$ In particular, for $\underline{s}=0,$ we get $\phi(\xi)=0$ for all $\xi \in E.$ This contradicts  the fact that $\phi$ is nonzero. Therefore, $E=\overline{span \{ E_{\underline{n}} \, : \, \underline{n} \in \mathbb{ Z}^d\}}$.
 \end{proof}

 Fix a separable $C^{*}$-algebra $\cla$, and a countably generated Hilbert $\cla$-module $E$. Suppose $V=\{ V_{ \underline{t}}\}_{ \underline{t} \in \bbr^d_+ }$ is a  strictly continuous, doubly commuting isometric representation  on the Hilbert $\mathcal{A}$-module $E$. We further assume that $V$ is strongly pure, i.e. for every $i$, $V_{te_i}V_{te_i}^{*} \to 0$ (in SOT) as $t \to \infty$. The isometric representation $V$ is fixed until further mention.

  \begin{lemma}
  \label{chaning the order}
  For $\underline{s},\underline{t} \in \bbr_+^d,$ there exist $\underline{p},\underline{q} \in \bbr_+^d$ such that $\underline{t}-\underline{s}=\underline{p}-\underline{q}$, and  
   \begin{align}\label{DOC} V_{\underline{s}}^*V_{\underline{t}}=V_{\underline{p}}V_{\underline{q}}^* \end{align}
   \end{lemma}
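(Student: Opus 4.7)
The plan is to reduce the statement to the one-parameter case coordinate by coordinate, using the doubly commuting hypothesis to shuffle the relevant factors into a convenient order.

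First I would handle the case $d=1$. For $s,t \in \bbr_+$, if $t \geq s$ then by the semigroup law $V_t = V_{t-s}V_s$, so $V_s^*V_t = V_{t-s}$, which has the form $V_p V_q^*$ with $p=t-s$ and $q=0$. If $t<s$, the same reasoning gives $V_s^*V_t = V_{s-t}^*$, which has the form $V_pV_q^*$ with $p=0$ and $q=s-t$. In both cases $p-q=t-s$ and $p,q \geq 0$. Explicitly, with $x_+ := \max(x,0)$, I can write $V_s^*V_t = V_{(t-s)_+} V_{(s-t)_+}^*$.

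Next, for general $d$, decompose $\underline{s}=\sum_{i=1}^d s_ie_i$ and $\underline{t}=\sum_{i=1}^d t_ie_i$. Since $V$ is a semigroup, the one-parameter families $\{V_{se_i}\}_{s\geq 0}$ pairwise commute, and hence $V_{\underline{s}} = V_{s_1e_1}V_{s_2e_2}\cdots V_{s_de_d}$, with any ordering yielding the same result. Taking adjoints I get
\[
V_{\underline{s}}^*V_{\underline{t}} = V_{s_de_d}^* \cdots V_{s_1e_1}^* V_{t_1e_1} \cdots V_{t_de_d}.
\]
Now I invoke the doubly commuting hypothesis: for $i \neq j$, the factor $V_{s_ie_i}^*$ commutes with $V_{t_je_j}$. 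This lets me move each $V_{t_je_j}$ to the left past all $V_{s_ie_i}^*$ with $i\neq j$ until it sits beside $V_{s_je_j}^*$. The outcome is a factorisation
\[
V_{\underline{s}}^*V_{\underline{t}} = \prod_{i=1}^d V_{s_ie_i}^* V_{t_ie_i},
\]
where the product can be taken in any order, again by commutativity (after the $d=1$ identifications below).

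Finally, I apply the one-dimensional case to each factor: $V_{s_ie_i}^*V_{t_ie_i} = V_{p_ie_i}V_{q_ie_i}^*$ with $p_i=(t_i-s_i)_+$ and $q_i=(s_i-t_i)_+$. Setting $\underline{p}=\sum p_ie_i$ and $\underline{q}=\sum q_ie_i$, I have $\underline{p}-\underline{q}=\underline{t}-\underline{s}$ and $\underline{p},\underline{q}\in \bbr_+^d$. One more application of the doubly commuting property (to move all $V_{p_ie_i}$ to the left of all $V_{q_je_j}^*$, using $i\neq j$ cases) together with the commutativity of the $V_{p_ie_i}$ among themselves and of the $V_{q_je_j}^*$ among themselves yields
\[
\prod_{i=1}^d V_{p_ie_i}V_{q_ie_i}^* = \Bigl(\prod_{i=1}^d V_{p_ie_i}\Bigr) \Bigl(\prod_{i=1}^d V_{q_ie_i}^*\Bigr) = V_{\underline{p}}V_{\underline{q}}^*,
\]
which is the desired equality. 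The only thing to be careful about is the bookkeeping of which commutation relation is used at which step; there is no real obstacle, since all reorderings involve either (i) factors from the semigroup along a single axis (trivially commuting) or (ii) an isometry along one axis and a co-isometry along a different axis (covered precisely by doubly commuting).
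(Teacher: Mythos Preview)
Your proof is correct and follows the same approach as the paper: the paper defines $\underline{p}$ and $\underline{q}$ via the index set $I=\{i:t_i\geq s_i\}$, setting $p_i=t_i-s_i$ for $i\in I$ (zero otherwise) and $q_i=s_i-t_i$ for $i\notin I$ (zero otherwise), which is exactly your $(t_i-s_i)_+$ and $(s_i-t_i)_+$, and then simply invokes doubly commuting without spelling out the reordering steps. Your write-up supplies the details the paper omits.
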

   \textit{Proof.} Let $I=\{i \in I_d:t_i \geq s_i\}$. Set $\underline{p}:=\sum_{i \in I}(t_i-s_i)e_i$ and $\underline{q}:=\sum_{i \in I^c}(s_i-t_i)e_i$.  Thanks to the fact that $V$ is doubly commuting, $V_{\underline{s}}^{*}V_{\ut}=V_{\underline{p}}V_{\underline{q}}^{*}$. \hfill $\Box$.

  For $\underline{z}  \in \mathbb{H}^d_+, $ define the submodules $K_{\underline{z}}$ and $L_{\underline{z}}$ as  $$K_{\underline{z}}:=\{  \xi \in E \, : \, V_{\underline{t}}^*\xi= e^{-\langle \underline{z},\underline{t}\rangle }\xi \, \mbox { for all } \, \underline{t} \in \bbr_+^d\},$$ and
  $$L_{\underline{z}}:= span \overline{ \{ V_{\underline{t}}\xi \, : \, \xi \in K_{\underline{z}}, \underline{t} \in \bbr_+^d\}}.$$
  Note that $K_{\underline{z}}$ is invariant for the representation  $V^*=\{ V^*_{ \underline{t}}\}_{ \underline{t} \in \bbr^d_+ },$ i.e. $V^*_{ \underline{t}}K_{\underline{z}} \subset K_{\underline{z}} $ for all  $\underline{t} \in \bbr_+^d$.
   By Lemma \ref{chaning the order}, $L_{\underline{z}}$ is reducing for $V$, i.e. $L_{\underline{z}}$ is invariant under both the representations  $V$ and $V^*.$   To prove the main theorem (Theorem \ref{thm1}), we  show that for all $\underline{z} \in \mathbb{H}_+^{d},$  $ K_{\underline{z}} $ is nonzero    and $L_{\underline{z}}$ is independent of $\underline{z}.$
  %First, we  need  to prove   if $K_{{\underline{z}}_0}$ is nonzero for some ${\underline{z}}_0,$ then  $K_{\underline{z}} \neq 0$ for all $\underline{z} \in \mathbb{H}_+$    in the following Proposition.

   \begin{prop} \label{PP2}
       If $K_{{\underline{z}}_0} \neq 0$ for some ${\underline{z}}_0 \in \mathbb{H}_+^{d}$, then $K_{\underline{z}} \neq 0$ for all $\underline{z} \in \mathbb{H}^d_+.$
   \end{prop}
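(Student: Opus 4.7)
The plan is to build an intertwining unitary
\[
U_0 : L^2(\bbr_+^d) \otimes K_{\underline{z}_0} \longrightarrow L_{\underline{z}_0}, \qquad V_{\underline{t}} U_0 = U_0(S_{\underline{t}} \otimes 1),
\]
and then obtain the desired non-zero element of $K_{\underline{z}}$ as $U_0(f_{\underline{z}} \otimes \xi_0)$ for any fixed non-zero $\xi_0 \in K_{\underline{z}_0}$. The reason this should succeed is that on the shift model, the exponential $f_{\underline{z}}$ already sits in the analogous $K$-space by \eqref{S5}, so transporting it through $U_0$ yields the required eigenvector on the module side.

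On the total set $\{S_{\underline{s}} f_{\underline{z}_0} \otimes \xi : \underline{s} \in \bbr_+^d,\ \xi \in K_{\underline{z}_0}\}$, I would define $U_0(S_{\underline{s}} f_{\underline{z}_0} \otimes \xi) := V_{\underline{s}} \xi$ and check well-definedness and isometry by comparing $\cla$-valued inner products on both sides. For $\xi,\eta \in K_{\underline{z}_0}$ and $\underline{s},\underline{t} \in \bbr_+^d$, Lemma \ref{chaning the order} gives $V_{\underline{s}}^*V_{\underline{t}} = V_{\underline{p}}V_{\underline{q}}^*$ with $\underline{t}-\underline{s}=\underline{p}-\underline{q}$; the eigenvalue relations $V_{\underline{q}}^*\eta = e^{-\langle \underline{z}_0,\underline{q}\rangle}\eta$ and $V_{\underline{p}}^*\xi = e^{-\langle \underline{z}_0,\underline{p}\rangle}\xi$ then yield
\[
\langle V_{\underline{s}}\xi,V_{\underline{t}}\eta\rangle \;=\; e^{-\langle \overline{\underline{z}_0},\underline{p}\rangle-\langle \underline{z}_0,\underline{q}\rangle}\,\langle \xi,\eta\rangle.
\]
The identical formula for $\langle S_{\underline{s}} f_{\underline{z}_0}, S_{\underline{t}} f_{\underline{z}_0}\rangle_{L^2}$ is obtained by applying the same calculation to the shift semigroup $S$, using that $f_{\underline{z}_0}$ plays the role of a unit vector in $K^{(S)}_{\underline{z}_0}$. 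A short density argument (if $g \in L^2(\bbr_+^d)$ is orthogonal to every $S_{\underline{s}} f_{\underline{z}_0}$, then differentiating the resulting integral identity forces $g=0$) shows $\{S_{\underline{s}} f_{\underline{z}_0}\}_{\underline{s}}$ is total in $L^2(\bbr_+^d)$. Consequently $U_0$ extends to an isometry whose range is dense in $L_{\underline{z}_0}$ by the very definition of $L_{\underline{z}_0}$, so $U_0$ is unitary onto $L_{\underline{z}_0}$.

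Since $L_{\underline{z}_0}$ is reducing for $V$, the intertwining $V_{\underline{t}}U_0 = U_0(S_{\underline{t}} \otimes 1)$ passes to adjoints: $V_{\underline{t}}^* U_0 = U_0(S_{\underline{t}} \otimes 1)^*$. Given $\underline{z} \in \mathbb{H}_+^d$, pick a non-zero $\xi_0 \in K_{\underline{z}_0}$ and set $\eta := U_0(f_{\underline{z}} \otimes \xi_0) \in L_{\underline{z}_0}$. Since $U_0$ is isometric, $\eta \neq 0$, and by \eqref{S5},
\[
V_{\underline{t}}^*\eta \;=\; U_0\bigl((S_{\underline{t}} \otimes 1)^*(f_{\underline{z}} \otimes \xi_0)\bigr) \;=\; U_0\bigl(e^{-\langle \underline{z},\underline{t}\rangle}f_{\underline{z}} \otimes \xi_0\bigr) \;=\; e^{-\langle \underline{z},\underline{t}\rangle}\,\eta,
\]
so $\eta \in K_{\underline{z}}$ and $K_{\underline{z}} \neq 0$.

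The main obstacle is the inner-product book-keeping in the module setting: one must confirm that the scalar factor appearing on the $E$-side matches the $L^2$-pairing exactly. The cleanest way to handle this is to note that both sides are instances of a single formula driven by Lemma \ref{chaning the order} together with the $K$-space eigenvalue condition, and then apply it symmetrically to $V$ and to $S$. The rest is routine, though some care is required in transitioning from the intertwining on $V$ to the one on $V^*$, which is exactly where the reducibility of $L_{\underline{z}_0}$ (a consequence of Lemma \ref{chaning the order}) is invoked.
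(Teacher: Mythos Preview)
Your proposal is correct and follows essentially the same approach as the paper: both arguments build an intertwining unitary between the shift model and a reducing submodule of $E$ via the inner-product computation driven by Lemma~\ref{chaning the order}, and then read off a non-zero element of $K_{\underline{z}}$ from the known eigenvector $f_{\underline{z}}$ on the shift side. The only cosmetic difference is that the paper works with the cyclic submodule $W$ generated by a single $\xi \in K_{\underline{z}_0}$ (with $\mathcal{A}$-coefficients), whereas you use the full submodule $L_{\underline{z}_0}$; the underlying mechanism is identical.
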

 \begin{proof}

Assume that $K_{\underline{z}_0}$ is nonzero for some ${\underline{z}}_0 \in \mathbb{H}^d_+.$  Let $\xi$ be a nonzero vector in $K_{\underline{z}_0}$. Define the   submodule $W$  by   $$W:= \overline {span \{ V_{\underline{t}} \xi a \,  : \,a \in \mathcal{A}, \underline{t} \in \bbr_+^d \}}.$$ 

Using Eq. \ref{DOC} and the fact that $\xi \in K_{{\underline{z}}_0},$ we see that $W$ is      reducing for $V.$ Let $L_{\xi}$ be the submodule of $E$ generated by $\xi,$  i.e. $L_{\xi}=\overline{span\{\xi a \, : \, a \in \mathcal{A}\}}.$ We claim that $V|_{W}$ is unitarily equivalent to the  shift representation  $S \otimes 1_{L_{\xi}}=\{ S_{\underline{t}} \otimes 1\} _{\underline{t} \in \bbr_+^d}$ on $L^2(\bbr_+^d) \otimes L_{\xi}.$ The proof will be over once we show this. For,  we can then restrict $V$ to the submodule $W$ and assume  without loss of generality that $V$ is the shift representation  $S \otimes 1=\{ S_{\underline{t}} \otimes 1_{L_{\xi}}\} _{\underline{t} \in \bbr_+^d}$ on $L^2(\bbr_+^d) \otimes L_{\xi}.$ However,  it follows from Eq. \ref{S5} that  the subspace $K_{\underline{z}} $ for the shift representation  $S \otimes 1=\{ S_{\underline{t}} \otimes 1\} _{\underline{t} \in \bbr_+^d}$ is nonzero  for every $\underline{z} \in \mathbb{H}_+^d$.

Let $\underline{s}, \underline{t} \in \bbr_+^d$. Thanks to Lemma \ref{chaning the order}, there exists $\underline{p},\underline{q} \in \bbr_{+}^{d}$ such that $\underline{t}-\underline{s}=\underline{p}-\underline{q}$ and $V_{\underline{s}}^{*}V_{\ut}=V_{\underline{p}}V_{\underline{q}}^{*}$. Then, for $a,b \in A$, 
\begin{align*}
 \langle  V_{\underline{s}} \xi a, V_{\underline{t}} \xi b \rangle &= \langle \xi a, V_{\underline{s}}^{*}V_{\ut}  \xi b \rangle \\
&=\langle \xi a|V_{\underline{p}}V_{\underline{q}}^{*}(\xi b)\rangle \\
&= \langle V_{\underline{p}}^{*}(\xi a)|V_{\underline{q}}^{*}(\xi b)\rangle \\
&=e^{-(\langle \underline{z}_0,\underline{p}\rangle+\overline{\langle \underline{z}_0,\underline{q}\rangle})} \langle \xi a|\xi b\rangle. 
\end{align*}
By a similar computation, using Eq. \ref{S5}, we get 
\[
\langle S_{\underline{s}}f_{\underline{z_0}} \otimes \xi a|S_{\underline{t}}f_{\underline{z_0}}\otimes \xi b \rangle =e^{-(\langle \underline{z}_0,\underline{p}\rangle+\overline{\langle \underline{z}_0,\underline{q}\rangle})} \langle \xi a|\xi b\rangle. 
\]
Hence, for $a,b \in \cla$, $\underline{s},\underline{t} \in \bbr_{+}^{d}$,
\begin{equation}
    \label{existence of unitary}
     \langle  V_{\underline{s}} \xi a, V_{\underline{t}} \xi b \rangle=\langle S_{\underline{s}}f_{\underline{z_0}} \otimes \xi a|S_{\underline{t}}f_{\underline{z_0}}\otimes \xi b \rangle.
\end{equation}

Note that $\{ V_{\underline{t}} \xi a \,  : \,a \in \mathcal{A},  \underline{t} \in \bbr_+^d \}$ and $\{S_{\underline{t}}f_{{\underline{z}}_0} \otimes \xi a\, : \, a \in \mathcal{A}, \underline{t} \in \bbr_+^d\}$ are total sets in $W$ and $L^2(\bbr_+^d) \otimes L_{\xi}$, respectively. Thus, thanks to Eq. \ref{existence of unitary},   there exists a unitary module map $U: W \rightarrow L^2(\bbr_+^d) \otimes L_{\xi}$ such that \[UV_{\underline{s}} \xi a =S_{\underline{s}}f_{\underline{z}_0} \otimes \xi a \]  for all $a \in \mathcal{A}$ and $\underline{s} \in \bbr_+^d.$     Moreover, for $\underline{t} \in \bbr_+^d$, 
$$ UV_{\underline{t}} (V_{\underline{s}} \xi a )=UV_{\underline{t}+\underline{s}} \xi a =S_{\underline{t}+\underline{s}}f_{{\underline{z}}_0} \otimes \xi a =(S_{\underline{t}} \otimes 1)U(V_{\underline{s}} \xi a),$$
 for all $ a\in \mathcal{A} $ and $\underline{s} \in \bbr_+^d$.
Since $\{ V_{\underline{t}} \xi a \,  : \,a \in \mathcal{A}, \underline{t} \in \bbr_+^d \}$ is a total set in $W$,  $UV_{\underline{t}}=(S_{\underline{t}} \otimes 1)U$ for all $\underline{t} \in \bbr_+^d$, i.e. $U$ intertwines the representations $V|_{W}$ and $S \otimes I_{L_{\xi}}.$
 \end{proof}

\begin{remark}\label{DV}
    
Let $\{V_1, V_2, \cdots , V_d\} $ be a  family of isometries on  $E$ such that each $V_i$ is pure for every $i$, i.e. $V_i^{*n} \rightarrow 0$ in the strong operator topology $(SOT)$. Suppose that $\{V_1,V_2,\cdots,V_d\}$ is doubly commuting, i.e. $V_iV_j=V_jV_i$ for all $i,j$ and $V_{i}^{*}V_j=V_jV_{i}^{*}$ for $i \neq j$. 
Note that
 $$\sum_{n \in \mathbb{Z}_+}(V_i^{n}V_i^{*n}-V_{i}^{n+1}V_{i}^{*n+1})=\lim_{n \rightarrow \infty } (I-V_i^{n}V_i^{*n})=I.$$  From this, we can decompose $E$ as
    \begin{align}\label{ONE}
    E= \bigoplus _{n \in \mathbb{Z}_+}V_i^{n}  (ker V_i^*).
    \end{align}
    for each $i \in I_d$.

    For each  $ i \in I_d,$  by  the definition of doubly commuting  isometries,  the submodule  $ker V_i^{*}$ is $V_j$-reducing   for  all $j \in I_d$ with $i \neq j$, and  the restriction of $V_j$ to $ker V_i^{*}$ is a pure isometry.  Thus,  by  induction and by applying  Eq. \ref{ONE},  we get  $$E= \bigoplus _{\underline{n} \in \mathbb{Z}^d_+}V_{\underline{n}}  ( \bigcap_{i \in I_d} ker V_i^{*}).$$   Hence, $\bigcap_{i \in I_d} ker V_i^{*}$ is non-zero .
    \end{remark}

\begin{prop}\label{PP3}
For every $\underline{z} \in \mathbb{H}_+^d, K_{\underline{z}} \neq 0.$
\end{prop}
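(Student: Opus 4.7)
The plan is to apply Proposition \ref{PP2}, which reduces the claim to producing a \emph{single} $\underline{z}_0 \in \mathbb{H}_+^d$ with $K_{\underline{z}_0} \neq 0$. I will build an explicit eigenvector from a ``wandering'' element supplied by the discrete theory. First, I invoke Remark \ref{DV} applied to the doubly commuting family of pure isometries $\{V^{(i)}_1\}_{i=1}^d$ to obtain that $F := \bigcap_{i=1}^d \ker V^{(i)*}_1$ is nonzero, and I fix a nonzero $\xi \in F$. The decisive property of $\xi$ is that $V^{(i)*}_s \xi = V^{(i)*}_{s-1}V^{(i)*}_1\xi = 0$ for every $s \geq 1$ and every $i$.

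Next, extend the semigroup to a two-sided family $\{W_{\underline{u}}\}_{\underline{u}\in\bbr^d}$ by declaring, coordinate by coordinate, $W^{(i)}_{u_i} := V^{(i)}_{u_i}$ when $u_i \geq 0$ and $W^{(i)}_{u_i} := V^{(i)*}_{-u_i}$ when $u_i < 0$, and setting $W_{\underline{u}} := \prod_i W^{(i)}_{u_i}$. Double commutation makes the product unambiguous and yields, by a short case analysis on the sign of $u_j$, the key intertwining relation
\[
V^{(j)*}_t \, W_{\underline{u}} \;=\; W_{\underline{u} - t e_j}, \qquad t \geq 0,\ \underline{u}\in\bbr^d.
\]
For $\underline{z} \in \mathbb{H}_+^d$, the candidate eigenvector is the Bochner integral
\[
\eta_{\underline{z}} := \int_{[-1,\infty)^d} e^{-\langle \underline{z}, \underline{u}\rangle}\, W_{\underline{u}}\,\xi \, d\underline{u} \;\in\; E,
\]
which converges absolutely because $\|W_{\underline{u}}\xi\| \leq \|\xi\|$ and the exponential weight is integrable over $[-1,\infty)^d$. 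Carrying the intertwiner inside the integral and changing variables by $-t$ in the $j$-th coordinate gives $V^{(j)*}_t \eta_{\underline{z}} = e^{-z_j t}\eta_{\underline{z}}$, provided one checks that the extra boundary strip $v_j \in [-1-t,-1)$ contributes nothing; on that strip $W^{(j)}_{v_j}\xi = V^{(j)*}_{-v_j}\xi = 0$ since $-v_j \geq 1$. Combining over $j$ yields $V^*_{\underline{t}}\eta_{\underline{z}} = e^{-\langle \underline{z},\underline{t}\rangle}\eta_{\underline{z}}$, so $\eta_{\underline{z}} \in K_{\underline{z}}$. This boundary-vanishing and change-of-variables step is, in my view, the technical heart of the argument.

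To establish $\eta_{\underline{z}_0} \neq 0$ for some $\underline{z}_0$, I argue by contradiction. If $\eta_{\underline{z}} = 0$ for every $\underline{z} \in \mathbb{H}_+^d$, then for any $\zeta \in E$ and any bounded linear functional $\phi$ on $\mathcal{A}$, the scalar function $\underline{u}\mapsto \phi(\langle W_{\underline{u}}\xi,\zeta\rangle)$ on $[-1,\infty)^d$ is continuous (by strict continuity of $V$) and has vanishing Laplace transform throughout $\mathbb{H}_+^d$. Classical injectivity of the Laplace transform forces the function to vanish identically; specialising $\underline{u} = 0$ and $\zeta = \xi$ gives $\phi(\langle \xi,\xi\rangle) = 0$ for every $\phi$, so $\xi = 0$, a contradiction. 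Hence $\eta_{\underline{z}_0} \neq 0$ for some $\underline{z}_0$, and Proposition \ref{PP2} completes the proof.
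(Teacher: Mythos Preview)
Your argument is correct and takes a genuinely different route from the paper. Both proofs start from the same wandering vector $\xi\in\bigcap_i\ker V^{(i)*}_1$ furnished by Remark~\ref{DV}, and both finish by invoking Prop.~\ref{PP2}. The difference is in how a continuous eigenvector is produced. The paper first builds the \emph{discrete} eigenvector $\sum_{\underline{n}\in\bbz_+^d}e^{-\langle\underline{z},\underline{n}\rangle}V_{\underline{n}}\xi$, showing that the space $E_0=\{\eta:V^*_{\underline{n}}\eta=e^{-\langle\underline{z},\underline{n}\rangle}\eta\text{ for }\underline{n}\in\bbz_+^d\}$ is nonzero; nonvanishing of this sum is immediate by pairing against $\xi$. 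It then observes that $T_{\underline{t}}:=e^{\langle\underline{z},\underline{t}\rangle}V^*_{\underline{t}}$ restricts to a $\bbz^d$-periodic semigroup on $E_0$ and applies the Fourier-analytic Prop.~\ref{PP1} to extract a nonzero element of $K_{\underline{z}+2\pi i\underline{n}}$ for some $\underline{n}\in\bbz^d$.

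Your construction bypasses Prop.~\ref{PP1} entirely: the two-sided family $W_{\underline{u}}$ and the integral over $[-1,\infty)^d$ (rather than $[0,\infty)^d$) are arranged so that the translated boundary strip lands in the region where the wandering condition $V^{(j)*}_s\xi=0$ for $s\ge1$ kills the extra term, yielding an explicit element of $K_{\underline{z}}$ for \emph{every} $\underline{z}$ in one stroke. The trade-off is that nonvanishing is no longer for free---the integrand on $(-1,0)^d$ involves small powers of $V^{(j)*}$ applied to $\xi$, which need not vanish---so you appeal to injectivity of the multivariable Laplace transform instead. This is a clean exchange: you drop the periodic/Fourier machinery of Prop.~\ref{PP1} at the cost of one classical analytic fact. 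Your verification of the operator identity $V^{(j)*}_tW_{\underline{u}}=W_{\underline{u}-te_j}$ (relying on double commutation to reorder the factors) and the boundary-vanishing step are both correct.
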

\begin{proof}
    In view of  Prop. \ref{PP2}, it is sufficient to show that $K_{\underline{z}} \neq 0$ for some $\underline{z} \in \mathbb{H}_+^d.$ Let $\underline{z} \in \mathbb{H}_{+}^{d}$. 
    Let
    \begin{align}\label{DSV}
    E_0=\{ \xi \in E \, : \, V^*_{\underline{n}}\xi = e^{-\langle \underline{z}, \underline{n} \rangle} \xi \textrm{~for all~}\underline{n} \in \mathbb{Z}^d_+  \}.\end{align}
    First, we  prove that $E_0$ is non-zero. Since $V$ is strongly pure, for each $i \in I_d, $  $V^{(i)}_1$ is a pure isometry and $\{V^{(1)}_1,V^{(2)}_1, \cdots , V^{(d)}_1\}$ is doubly commuting. By Remark \ref{DV},
    there exists $\xi \in \bigcap_{i \in I_d} ker V_1^{(i)*}$ such that $\xi \neq 0$. It is easily verifiable that $\sum_{\underline{n} \in \mathbb{Z}^d_+} e^{-\langle \underline{\beta, n} \rangle }V_{\underline{n}} \xi \in E_0$. Hence,  $E_0$ is nonzero.
    
     Note that the submodule $E_0$  is  invariant for the representation  $V^*=\{ V^*_{ \underline{t}}\}_{ \underline{t} \in \bbr_+^d}$. For $\ut \in \bbr_{+}^{d}$,  define an  operator $T_{\underline{t}}: E_0 \rightarrow E_0$ by
     \begin{align}\label{DSV1}
     T_{\underline{t}}\xi= e^{\langle \underline{z} , \underline{t} \rangle }V^*_{\underline{t}} \xi,  \,\,\, \xi \in E_0.\end{align}
     
     Then, $T=\{T_{\underline{t}}\}_{\underline{t} \in \bbr_+^d}$ is a strongly continuous semigroup on  the Banach space $E_0.$ Moreover, $T$ is $\mathbb{N}^d$-periodic.   By Prop. \ref{PP1}, there exists a nonzero vector $\xi \in E_0$ and $\underline{n} \in \bbz ^d$ such that \[T_{\underline{t}} \xi= e^{-2\pi i \langle \underline{n},\underline{ t}  \rangle }\xi\] for all $\underline{t} \in \bbr_+^d,$  i.e.  $V^*_{\underline{t}} \xi= e^{- \langle\underline{z}+2\pi i \underline{n}, \underline{t} \rangle }\xi.$ This shows that   $K_{\underline{z} +2\pi i \underline{n}} \neq 0.$
    \end{proof}

\begin{prop}\label{IND}
For  $\underline{z}_1, \underline{z}_2  \in \mathbb{H}_+^d, L_{\underline{z}_1}=L_{\underline{z}_2}.$ 
\end{prop}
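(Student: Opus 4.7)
My plan is to prove $L_{\underline{z}_1}\subseteq L_{\underline{z}_2}$, from which the reverse inclusion, and hence equality, follows by symmetry. The idea is to reuse the unitary equivalence constructed inside the proof of Proposition~\ref{PP2}: each cyclic submodule generated by a single vector of $K_{\underline{z}_1}$ is unitarily equivalent to a shift representation on $L^2(\bbr_+^d)\otimes L_\xi$, and inside such a shift representation the analogue of $L_{\underline{z}}$ fills up the whole space for every $\underline{z}\in\mathbb{H}_+^d$.

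Concretely, fix a non-zero $\xi\in K_{\underline{z}_1}$, which exists by Proposition~\ref{PP3}, and put
\[
W_\xi:=\overline{\mathrm{span}}\{V_{\underline{t}}\xi a:\underline{t}\in\bbr_+^d,\ a\in\cla\}.
\]
Since $\xi a\in K_{\underline{z}_1}$ for every $a\in\cla$, we have $W_\xi\subseteq L_{\underline{z}_1}$, and $W_\xi$ is reducing for $V$. The proof of Proposition~\ref{PP2}, applied with $\underline{z}_0=\underline{z}_1$, supplies a unitary module map $U_\xi:W_\xi\to L^2(\bbr_+^d)\otimes L_\xi$ satisfying $U_\xi V_{\underline{t}}U_\xi^{\ast}=S_{\underline{t}}\otimes 1$ and $U_\xi(V_{\underline{t}}\xi a)=S_{\underline{t}}f_{\underline{z}_1}\otimes \xi a$. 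Using Eq.~\ref{S5}, a direct check shows that $U_\xi^{-1}(f_{\underline{z}_2}\otimes\xi a)\in K_{\underline{z}_2}\cap W_\xi$ for every $a\in\cla$, so $K_{\underline{z}_2}\cap W_\xi$ is non-trivial.

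The crucial technical input is the density statement
\[
\overline{\mathrm{span}}\{S_{\underline{t}}f_{\underline{z}_2}:\underline{t}\in\bbr_+^d\}=L^2(\bbr_+^d).
\]
I would prove it by a Laplace transform type argument: if $g\in L^2(\bbr_+^d)$ is orthogonal to every $S_{\underline{t}}f_{\underline{z}_2}$, then up to a non-vanishing prefactor
\[
H(\underline{t}):=\int_{\underline{x}\geq\underline{t}}g(\underline{x})\,e^{-\langle\overline{\underline{z}}_{2},\underline{x}\rangle}\,d\underline{x}=0
\]
for every $\underline{t}\in\bbr_+^d$; differentiating once in each coordinate forces $g(\underline{t})\,e^{-\langle\overline{\underline{z}}_{2},\underline{t}\rangle}=0$, and hence $g=0$ a.e. Making the multi-variable differentiation rigorous, via absolute continuity of $H$ and the iterated fundamental theorem of calculus, is the step I expect to need the most care; it is the main obstacle.

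Granted this density, the image under $U_\xi$ of the closed submodule generated by $\{V_{\underline{t}}\eta:\eta\in K_{\underline{z}_2}\cap W_\xi,\ \underline{t}\in\bbr_+^d\}$ contains the span of $\{S_{\underline{t}}f_{\underline{z}_2}\otimes\xi a:\underline{t}\in\bbr_+^d,\ a\in\cla\}$, whose closure is $\overline{\mathrm{span}}\{S_{\underline{t}}f_{\underline{z}_2}\}\otimes L_\xi=L^2(\bbr_+^d)\otimes L_\xi$. Thus this submodule equals $W_\xi$; but by the very definition of $L_{\underline{z}_2}$ it is contained in $L_{\underline{z}_2}$, which gives $W_\xi\subseteq L_{\underline{z}_2}$. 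Finally, $L_{\underline{z}_1}$ equals the closed linear span of $\{W_\xi:\xi\in K_{\underline{z}_1}\}$, since every generator $V_{\underline{t}}\xi$ of $L_{\underline{z}_1}$ lies in $W_\xi$ (using an approximate unit of $\cla$ if $\cla$ is non-unital), so $L_{\underline{z}_1}\subseteq L_{\underline{z}_2}$, and symmetry yields $L_{\underline{z}_1}=L_{\underline{z}_2}$.
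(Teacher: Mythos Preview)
Your argument is correct and follows essentially the same route as the paper: both reduce to showing $K_{\underline{z}_1}\subset L_{\underline{z}_2}$ by transporting to the shift model via the unitary of Proposition~\ref{PP2}, using that $U_\xi^{-1}(f_{\underline{z}_2}\otimes\xi a)\in K_{\underline{z}_2}$ together with the density of $\{S_{\underline{t}}f_{\underline{z}_2}:\underline{t}\in\bbr_+^d\}$ in $L^2(\bbr_+^d)$. The only difference is cosmetic---you package the conclusion as $W_\xi\subset L_{\underline{z}_2}$ rather than $\xi\in L_{\underline{z}_2}$---and you supply a Laplace-transform proof of the density, which the paper simply asserts (and already used implicitly in Proposition~\ref{PP2}).
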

\begin{proof}
     Let $\underline{z}_1, \underline{z}_2  \in \mathbb{H}_+^d. $  Since $L_{\underline{z}_i}, i=1,2 ,$ is a reducing submodule for  $V,$  it is sufficient to show that  $ K_{\underline{z}_1} \subset L_{\underline{z}_2} .$ Let $\xi  \in K_{\underline{z}_1}$, and define the submodule $W_{\xi}$  of $E$ by $$W_{\xi}:=span \overline{ \{ V_{\underline{t}} \xi a \,  : \,a \in \mathcal{A}, \underline{t} \in \bbr_+^d \} }.$$
     Then, thanks to Lemma \ref{chaning the order}, $W_{\xi}$ is  reducing for the representation $V$. 
Let $L_{\xi}: =\overline{span\{\xi a  \, : \, a \in \mathcal{A}\}}$. As in Lemma \ref{PP2}, (using Eq. \ref{existence of unitary}), we see that there exists a unitary  $U:W_{\xi} \rightarrow L^2(\bbr_+^d)\otimes L_\xi$ such that \[U(V_{\underline{t}}\xi a )= S_{\underline{t}}f_{\underline{z}_1} \otimes  \xi a\]
for $\underline{t} \in \bbr_{+}^{d}$. Moreover, $U$  intertwines the representations $V|_{W_\xi}$ on $W_{\xi}$ and $S \otimes 1_{L_{\xi}}$ on $L^2(\mathbb{R}_+^d) \otimes L_{\xi},$ i.e.  $ UV_{\underline{t}}\eta=(S_{\underline{t}} \otimes 1)U\eta$ for all $\eta \in W_{\xi}$. 

Since $span \{S_{\underline{t}}f_{\underline{z}_2}:  \underline{t} \in \mathbb{R}_+^d\}$ is dense in $ L^2(\bbr_+^d),$ there exists a sequence $(h_{\underline{t}_n})_{n \in \bbn}$ in $L^2(\bbr_+^d)$ converging to $f_{\underline{z}_1}$ and is of the form 

$$h_{\underline{t}_{n}}=\sum_{m=0}^{N_{\underline{t}_n}}\alpha_{\underline{t}_m}S_{\underline{t}_m}f_{\underline{z}_2}, \hspace{0.6cm}\alpha_{\underline{t}_m} \in \mathbb{C}, \underline{t}_m\in\bbr_+^d\, \mbox{and}\,  N_{\underline{t}_n} \in \mathbb{N}_0.  $$
   Now,
   $$
   \xi a =  U^{*}(f_{\underline{z}_1} \otimes  \xi a)
   =\lim_{ n \rightarrow \infty } \sum_{m=0}^{N_{\underline{t}_n}}\alpha_{\underline{t}_m}U^*(S_{\underline{t}_m}\otimes 1).(f_{\underline{z}_2} \otimes \xi a).$$
   This simplifies to
   $$
\xi a=\lim_{ n \rightarrow \infty } \sum_{m=0}^{N_{\underline{t}_n}}\alpha_{\underline{t}_m}V_{\underline{t}_m}U^*(f_{\underline{z}_2} \otimes \xi a),
   $$
   where the last equality follows from the fact that  $U$ intertwines the representation $V|_{W_\xi}$ and $S \otimes 1_{L_{\xi}}$. Furthermore, since  $$V^*_{\underline{t}}U^*(f_{\underline{z}_2} \otimes \xi a)=U^* (S^*_{\underline{t}}f_{\underline{z}_2} \otimes \xi a) =e^{-\langle \underline{z}_2, \underline{t} \rangle }U^*(f_{\underline{z}_2} \otimes \xi a)$$ for all $\underline{t} \in \bbr_+^d,$ we conclude that   $U^*(f_{\underline{z}_2} \otimes \xi a) \in K_{\underline{z}_2}.$ This implies that $\xi a \in L_{\underline{z}_2} $ for all $a \in \mathcal{A}.$ For an approximate identity $\{e_\lambda\}_{\lambda \in \Lambda}$ of $\cla$, $\xi e_\lambda \to \xi$. Hence, $\xi \in L_{\underline{z}_2}$. Therefore,  $K_{\underline{z}_1} \subset L_{\underline{z}_2}.$  This completes the proof.
    \end{proof}

 We next prove Thm. \ref{main_intro}. In fact, we show that we can take $K=K_{\underline{z}}$. 
 
 \textit{Notation:}  For $\underline{\epsilon} \in \{0,1\}^{d}$, let $|\underline{\epsilon}|=\sum_{i=1}^{d}\epsilon_i$.
\begin{theorem}\label{thm1}
     Keep the foregoing notation. For every $\underline{z} \in \mathbb{H}_+^d,$ there exists a   unitary  operator $U: E \rightarrow L^2(\bbr_+^d)  \otimes K_{\underline{z}} $ such that 
 \begin{enumerate}
     \item  for $\underline{t} \in \bbr_+^d, UV_{\underline{t}}=(S_{\underline{t}} \otimes 1)U,$ and 
      \item for $\xi \in K_{\underline{z}}$, $U(\xi )=f_{\underline{z}} \otimes \xi$. 
 \end{enumerate}
     \end{theorem}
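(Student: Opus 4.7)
The plan is to mirror the three-step strategy laid out in the introduction, noting that Prop.\ \ref{PP3} and Prop.\ \ref{IND} already supply the first two steps, namely $K_{\underline{z}} \neq 0$ and the $\underline{z}$-independence of $L_{\underline{z}}$. What remains is (a) to construct a module unitary on $L_{\underline{z}}$ intertwining $V|_{L_{\underline{z}}}$ with the shift $S \otimes 1$, and (b) to show $L_{\underline{z}} = E$. Step (b) is where the Hilbert module setting diverges from the Hilbert space case; rather than using orthogonal complements, I will pull back the Wold-type decomposition of Remark \ref{DV}.

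For step (a), I would observe that the inner-product computation producing Eq.\ \ref{existence of unitary} in the proof of Prop.\ \ref{PP2} is insensitive to the fact that only a single generating vector was used. The same manipulation, using Lemma \ref{chaning the order} and the identity $V_{\underline{r}}^{*}\xi = e^{-\langle \underline{z},\underline{r}\rangle}\xi$ for $\xi \in K_{\underline{z}}$, yields
\[\langle V_{\underline{s}}\xi_1,\, V_{\underline{t}}\xi_2 \rangle \;=\; \langle S_{\underline{s}}f_{\underline{z}} \otimes \xi_1,\; S_{\underline{t}}f_{\underline{z}} \otimes \xi_2 \rangle\]
for all $\xi_1,\xi_2 \in K_{\underline{z}}$ and $\underline{s},\underline{t} \in \bbr_+^d$. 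Since the sets $\{V_{\underline{t}}\xi : \xi \in K_{\underline{z}},\, \underline{t} \in \bbr_+^d\}$ and $\{S_{\underline{t}}f_{\underline{z}} \otimes \xi : \xi \in K_{\underline{z}},\, \underline{t} \in \bbr_+^d\}$ are total in $L_{\underline{z}}$ and $L^2(\bbr_+^d) \otimes K_{\underline{z}}$ respectively, the relation defines a unitary module map $U_{0} : L_{\underline{z}} \to L^2(\bbr_+^d) \otimes K_{\underline{z}}$ sending $V_{\underline{t}}\xi \mapsto S_{\underline{t}}f_{\underline{z}} \otimes \xi$. The intertwining property $U_{0} V_{\underline{t}} = (S_{\underline{t}} \otimes 1)U_{0}$ and the identity $U_{0}(\xi) = f_{\underline{z}} \otimes \xi$ for $\xi \in K_{\underline{z}}$ are immediate from this formula on the total set.

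For step (b), I would fix $\xi$ in the submodule $K := \bigcap_{i \in I_d} \ker V_{1}^{(i)*}$ appearing in Remark \ref{DV} and show $\xi \in L_{\underline{z}}$. Since the individual adjoints $V_{1}^{(i)*}$ mutually commute and each annihilates $\xi$, one has $V_{\underline{m}}^{*}\xi = 0$ for every $\underline{m} \in \bbz_+^d \setminus \{\underline{0}\}$; by Lemma \ref{chaning the order}, this forces $\{V_{\underline{m}}\xi\}_{\underline{m} \in \bbz_+^d}$ to be an orthogonal family. For $t > 0$, put
\[\phi_{t} \;:=\; \sum_{\underline{m} \in \bbz_+^d} e^{-t|\underline{m}|}\, V_{\underline{m}} \xi,\qquad |\underline{m}|:= m_1+\cdots+m_d.\]
The series converges in norm by orthogonality, and a direct computation based on Lemma \ref{chaning the order} shows $V_{\underline{n}}^{*}\phi_{t} = e^{-t|\underline{n}|}\phi_{t}$ for every $\underline{n} \in \bbz_+^d$, so $\phi_{t}$ lies in $E_{0}(t\underline{e})$ in the notation of Eq.\ \ref{DSV}, where $\underline{e}=(1,\dots,1)$. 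By the same use of Prop.\ \ref{PP1} as in the proof of Prop.\ \ref{PP3}, $E_{0}(t\underline{e}) = \overline{\mathrm{span}}\{K_{t\underline{e}+2\pi i \underline{n}} : \underline{n} \in \bbz^d\}$, and Prop.\ \ref{IND} then gives $\phi_{t} \in L_{t\underline{e}} = L_{\underline{z}}$. As $t \to \infty$, orthogonality yields
\[\|\phi_{t} - \xi\|^{2} \;=\; \bigg(\frac{1}{(1-e^{-2t})^{d}}-1\bigg)\|\xi\|^{2} \;\longrightarrow\; 0,\]
so $\xi \in L_{\underline{z}}$ since $L_{\underline{z}}$ is closed. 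Hence $K \subset L_{\underline{z}}$; combining this with the $V$-invariance of $L_{\underline{z}}$ and the decomposition $E = \bigoplus_{\underline{m} \in \bbz_+^d} V_{\underline{m}}(K)$ of Remark \ref{DV} gives $E = L_{\underline{z}}$, so $U := U_{0}$ is the required unitary.

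The main obstacle is step (b). In a Hilbert space one would restrict $V$ to $L_{\underline{z}}^{\perp}$, apply Prop.\ \ref{PP3} there, and derive a contradiction via Prop.\ \ref{IND}. In a general Hilbert module, however, $L_{\underline{z}}^{\perp} = 0$ does not force $L_{\underline{z}} = E$, and arguing complementation of $L_{\underline{z}}$ directly is awkward. The Wold-type direct sum of Remark \ref{DV} is genuinely complemented in the module setting, and the Laplace-type series $\phi_{t}$ turns this into the desired density statement without any recourse to orthogonal complements.
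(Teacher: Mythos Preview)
Your proof is correct and follows essentially the same architecture as the paper's: construct $U$ on $L_{\underline{z}}$ via the inner-product identity, then show $E=L_{\underline{z}}$ by proving $\bigcap_i \ker V_1^{(i)*}\subset L_{\underline{z}}$ through the Laplace-type series landing in $E_0$, Prop.~\ref{PP1}, and Prop.~\ref{IND}, and finally invoke Remark~\ref{DV}. The only difference is cosmetic: to extract $\xi$ from the series, the paper uses a single $\eta=\sum_{\underline{n}}e^{-\langle\underline{z},\underline{n}\rangle}V_{\underline{n}}\xi$ together with the algebraic inversion $\xi=\sum_{\underline{\epsilon}\in\{0,1\}^d}(-1)^{|\underline{\epsilon}|}e^{-\langle\underline{z},\underline{\epsilon}\rangle}V_{\underline{\epsilon}}\eta$, whereas you use the family $\phi_t$ and pass to the limit $t\to\infty$.
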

    \begin{proof} 
Let  $\underline{z} \in \mathbb{H}_{+}^{d}$. Let $E_0$ be the submodule  of $E$ 
and  $T=\{T_{\underline{t}}\}_{\underline{t} \in \bbr_+^d}$  the representation   defined as in Eq. \ref{DSV}  and Eq. \ref{DSV1}, respectively.  By  applying Prop. \ref{PP1} to the representation $T$ on $E_0$, we see that   $ span    \{(E_0)_{\underline{n}} \, : \, \underline{n} \in \mathbb{Z}^d  \}$ is dense in $E_0$, where  \[(E_0)_{\underline{n}}: = \{\xi \in E_0 :  T_{\underline{t}} \xi = e^{2\pi i \left <\underline{ n}, \underline{t}\right > }\xi  \,\, \mbox{for all} \, \, \underline{t} \in \mathbb{R}_+^{d}\} .\]
For $ \underline{n} \in \mathbb{Z}^d$ and  $\xi \in (E_0)_{\underline{n}}$, we have  $$V^*_{\underline{t}}\xi= e^{-\langle \underline{z}, \underline{t} \rangle }T_{\underline{t}}\xi=e^{- \langle \underline{z} +2\pi i \underline{n}, \underline{t} \rangle }\xi, \,\,  \,\,   \underline{t}\in \bbr_+^d.$$ This implies that $(E_0)_{\underline{n}} \subset  K_{\underline{z} +2\pi i \underline{n}}.$ Furthermore, By Prop. (\ref{IND})
$$(E_0)_{\underline{n}} \subset  K_{\underline{z} +2\pi i \underline{n}}\subset L_{\underline{z} +2\pi i \underline{n}} =L_{\underline{z}}, \, \, \mbox{for all}\,\,  \underline{z} \in \mathbb{H}^d_+ \, \mbox{and}\, \underline{n}\in \mathbb{Z}^d.$$
Therefore,  $E_0 \subset L_{\underline{z}}$ for every $\underline{z} \in \mathbb{H}_+^d.$

Let  $\xi \in \bigcap_{i \in I_d} ker V_1^{(i)*}$, and   define   $\eta:= \sum_{\underline{n} \in \mathbb{Z}^d_+} e^{-\langle \underline{z}, \underline{n} \rangle }V_{\underline{n}}\xi$.  It is routine to verify that  $ \eta \in E_0$  and   $\displaystyle \xi=\sum_{\underline{\epsilon}\in \{0,1\}^{d}}(-1)^{|\underline{\epsilon}|}e^{-\langle \underline{z}|\underline{\epsilon}\rangle}V_{\underline{\epsilon}}\eta$.  Hence,  
$\xi \in  \overline{span \{V_{\underline{n}}E_0 \, : \, \underline{n} \in \mathbb{Z}_+^d \}}\subset L_{\underline{z}}$ which implies that  $\bigcap_{i \in I_d} ker V_1^{(i)*} \subset L_{\underline{z}}.$ Since the submodule $L_{\underline{z}}$ is reducing for the representation $V,$ by Remark \ref{DV}, $E= \bigoplus _{\underline{n} \in \mathbb{Z}^d_+}V_{\underline{n}}  ( \bigcap_{i \in I_d} ker V_i^{*}) \subset  L_{\underline{z}}.$ Therefore, $E=L_{\underline{z}}$ for every $\underline{z} \in \mathbb{H}_{+}^{d}$.

Fix $\underline{z} \in \mathbb{H}_{+}^{d}$. A  calculation similar to the one used to deduce  Eq. \ref{existence of unitary} shows that   for $\xi, \eta \in K_{\underline{z}}$ and $\underline{s}, \underline{t} \in \mathbb{R}_+^d,$ we  have
$$\langle V_{\underline{s}}\xi, V_{\underline{t}}\eta\rangle =\langle  S_{\underline{s}}f_{\underline{z}} \otimes \xi,  S_{\underline{t}}f_{\underline{z}} \otimes \eta \rangle. $$

Since $E=L_{\underline{z}}$,  the set $\{V_{\underline{s}}\xi \, : \, \xi \in K_{\underline{z}}, \underline{s} \in \mathbb{R}_+^d\}$ is total in $E$. Also,   $\{ S_{\underline{s}}f_{\underline{z}} \otimes \xi \, : \, \xi \in K_{\underline{z}}, \underline{s} \in \mathbb{R}_+^d\}$ is total in $L^2(\mathbb{R}_+^d) \otimes K_{\underline{z}}$.  It follows from  the preceding equation that there exists a  unitary  map $U: E \rightarrow L^2(\mathbb{R}_+^d) \otimes K_{\underline{z}}$ such  that $$U(V_{\underline{s}}\xi)=S_{\underline{s}}f_{\underline{z}} \otimes \xi \,\,\,\,\mbox{for}\, \, \xi \in K_{\underline{z}}, \,\underline{s} \in \mathbb{R}_+^d.$$ Furthermore, for $\underline{t} \in \mathbb{R}_+^d,$  we  have $$UV_{\underline{t}} (V_{\underline{s}} \xi  )=UV_{\underline{t}+\underline{s}} \xi  =S_{\underline{t}+\underline{s}}f_{{\underline{z}}} \otimes \xi  =(S_{\underline{t}} \otimes 1)U(V_{\underline{s}} \xi ),$$
 where  $\xi \in K_{\underline{z}}$ and $ \underline{s} \in \bbr_+^d.$ Since $\{ V_{\underline{s}} \xi  \,  : \, \xi \in K_{\underline{z}}, \underline{s} \in \bbr_+^d \}$ is a total set in  $E, $  it follows that $UV_{\underline{t}}=(S_{\underline{t}} \otimes 1)U$ for all $\underline{t} \in \bbr_+^d$. The proof is over. 
\end{proof}

    In the Hilbert space setting, we can replace strict continuity of $V$ by strong continuity. Notice that in the course of the proof, the only place where we required strict continuity is in Prop. \ref{PP3}, where we needed strict continuity to ensure that  $\{V_{\ut}^*\}_{\ut \in \bbr_{+}^{d}}$ on the Banach space $E_0$ is weakly continuous in order to apply Prop. \ref{PP1}. 
    However, if $\{V_{\ut}\}_{\ut \in \bbr_+^d}$ is a strongly continuous semigroup of isometries on a Hilbert space $\clh$, then $\{V_{\ut}^{*}\}_{\ut \in \bbr_{+}^{d}}$ is weakly continuous. The rest of the arguments do not require any modification. Thus, we get the following theorem which we call Cooper's theorem. The case $d=1$ (due to Cooper (\cite{Cooper}))   is called Cooper's theorem in the literature.

    \begin{theorem}
    \label{Cooper}
    Let $V=\{V_{\ut}\}_{\ut \in \bbr_+^d}$ be a strongly continuous doubly semigroup of isometries on a separable Hilbert space $\clh$. Suppose that $V$ is strongly pure. Then, there exists a Hilbert space $\clk$ and a unitary operator $U:L^2(\bbr_+^d)\otimes \clk \to \clh$ such that 
    \[
    U(S_{\ut} \otimes 1)U^*=V_{\ut}
    \]
    for every $\ut \in \bbr_+^d$. 
    \end{theorem}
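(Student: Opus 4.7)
The plan is to apply Theorem \ref{thm1} with $\cla = \bbc$ and $E = \clh$; in that specialization a Hilbert $\cla$-module is exactly a separable Hilbert space, an adjointable operator is just a bounded operator with its Hilbert space adjoint, and the conclusion of Theorem \ref{thm1} is precisely the conclusion we want here. The only hypothesis that does not match verbatim is continuity: Theorem \ref{thm1} was stated for a strictly continuous semigroup, whereas here we are given only strong continuity of $V$.

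I would therefore inspect the proof of Theorem \ref{thm1} and locate the single step in which strict (rather than strong) continuity is actually used. All of Lemma \ref{chaning the order}, Prop. \ref{PP2}, Remark \ref{DV}, Prop. \ref{IND}, and the final intertwining construction proceed through algebraic identities, inner product computations on total sets of the form $\{V_{\underline{s}}\xi : \xi \in K_{\underline{z}},\ \underline{s} \in \bbr_+^d\}$, and orthogonal decompositions based on the Wold-type identity in Remark \ref{DV}; none of these are sensitive to the choice between strong and strict continuity once one is working on a Hilbert space. The one sensitive step is in Prop. \ref{PP3}: the argument there applies Prop. \ref{PP1} to the rescaled semigroup $T_{\ut} = e^{\langle \underline{z},\ut\rangle}V_{\ut}^*$ on the invariant subspace $E_0$, and Prop. \ref{PP1} requires $T$ to be weakly continuous.

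The key observation is that, on a Hilbert space, strong continuity of $V$ automatically yields weak continuity of $V^*$: for $\xi,\eta \in \clh$,
\[
\langle V_{\ut}^*\eta, \xi\rangle = \langle \eta, V_{\ut}\xi\rangle,
\]
and the right-hand side is continuous in $\ut$ because $\ut \mapsto V_{\ut}\xi$ is norm continuous. Consequently $T$ is weakly continuous on the closed invariant subspace $E_0 \subseteq \clh$, Prop. \ref{PP1} applies, and the conclusion of Prop. \ref{PP3}, namely $K_{\underline{z}} \neq 0$ for some (hence by Prop. \ref{PP2} every) $\underline{z} \in \mathbb{H}_+^d$, is recovered.

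With this single replacement inside Prop. \ref{PP3}, the remainder of the proof of Theorem \ref{thm1} transfers verbatim, yielding a Hilbert space $\clk := K_{\underline{z}}$ (for any chosen $\underline{z} \in \mathbb{H}_+^d$) and a unitary $U: L^2(\bbr_+^d) \otimes \clk \to \clh$ intertwining $S_{\ut} \otimes 1$ with $V_{\ut}$. I do not expect any additional obstacle; the only nontrivial verification is the passage from strong continuity of $V$ to weak continuity of $V^*$, which as displayed above is immediate on a Hilbert space.
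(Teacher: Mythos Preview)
Your proposal is correct and matches the paper's argument essentially verbatim: the paper explicitly notes that the only place strict continuity was used is in Prop.~\ref{PP3} (to apply Prop.~\ref{PP1} to $\{V_{\ut}^*\}$), and that on a Hilbert space strong continuity of $V$ gives weak continuity of $V^*$, so the proof of Theorem~\ref{thm1} goes through unchanged with $\cla=\bbc$.
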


\subsection{Wold Decomposition for doubly commuting semigroups of isometries} 
For the rest of this paper, we restrict ourselves to the Hilbert space setting, and we do not consider Hilbert modules. 
In this subsection, we discuss the Wold decomposition of doubly commuting semigroup of isometries which allows us to deduce the structure of irreducible doubly commuting representations. If $S$ is a topological semigroup and $V=\{V_s\}_{s \in S}$ is a strongly continuous semigroup of isometies, we call $V$ \emph{irreducible} if $V$ has no reducing subspaces, or equivalently, $\{V_s,V_{s}^{*}:s \in S\}^{'}=\bbc$. 
The proof of the following lemma, which is well known, is included for completeness. 
 \begin{lemma}
 \label{commutant}
     For every $d \geq 1$, the isometric representation $\{S_{\underline{t}}\}_{t \in \bbr_{+}^{d}}$ on $L^2(\bbr_{+}^{d})$ is irreducible. 
 \end{lemma}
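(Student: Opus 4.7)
The plan is to prove irreducibility via Theorem \ref{thm1} (Cooper's theorem), which has just been established, by identifying a simple cyclic vector for the shift semigroup. The key observation is that for the shift representation $S$, the eigenspace $K_{\underline{z}}$ (defined before Prop.~\ref{PP2}) is one-dimensional, spanned by $f_{\underline{z}}$. I would verify this directly: if $f \in L^2(\bbr_+^d)$ satisfies $S_{\underline{t}}^* f = e^{-\langle \underline{z},\underline{t}\rangle} f$ for all $\underline{t} \in \bbr_+^d$, then $f(\underline{x}+\underline{t}) = e^{-\langle \underline{z},\underline{t}\rangle} f(\underline{x})$ for a.e.\ $\underline{x}$, which forces $f$ to be a scalar multiple of $e^{-\langle \underline{z},\cdot\rangle}$, i.e.\ of $f_{\underline{z}}$.

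Next, I would invoke Theorem \ref{thm1} applied to the shift semigroup itself. Since $S$ is a strongly continuous, strongly pure, doubly commuting isometric representation on $L^2(\bbr_+^d)$, Theorem \ref{thm1} (in fact the intermediate conclusion $E = L_{\underline{z}}$ appearing in its proof) gives that $L_{\underline{z}} = \overline{\mathrm{span}}\{S_{\underline{t}}\xi : \xi \in K_{\underline{z}}, \underline{t} \in \bbr_+^d\} = L^2(\bbr_+^d)$. Combined with the previous step, this says that $\{S_{\underline{t}} f_{\underline{z}} : \underline{t} \in \bbr_+^d\}$ is total in $L^2(\bbr_+^d)$; equivalently, $f_{\underline{z}}$ is a cyclic vector for $S$.

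Now suppose $T \in \{S_{\underline{t}}, S_{\underline{t}}^* : \underline{t} \in \bbr_+^d\}'$. Since $T$ commutes with every $S_{\underline{t}}^*$, it preserves the joint eigenspace $K_{\underline{z}}$. Because $K_{\underline{z}} = \bbc f_{\underline{z}}$ is one-dimensional, there exists $\lambda \in \bbc$ with $T f_{\underline{z}} = \lambda f_{\underline{z}}$. Using that $T$ commutes with each $S_{\underline{t}}$, one computes
\[
T(S_{\underline{t}} f_{\underline{z}}) = S_{\underline{t}} T f_{\underline{z}} = \lambda S_{\underline{t}} f_{\underline{z}}
\]
for every $\underline{t} \in \bbr_+^d$. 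Since $\{S_{\underline{t}} f_{\underline{z}} : \underline{t} \in \bbr_+^d\}$ is total, $T = \lambda I$, and hence the commutant is trivial, proving irreducibility.

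The only potentially nontrivial step is the identification of $K_{\underline{z}}$ as one-dimensional; the rest is a direct application of Theorem \ref{thm1}. An alternative route, avoiding Cooper's theorem, would be to observe that the projections $S_{\underline{t}} S_{\underline{t}}^*$ are the multiplication operators by the characteristic functions of the translated quadrants $\underline{t} + \bbr_+^d$, which generate $L^\infty(\bbr_+^d)$ as a MASA on $L^2(\bbr_+^d)$; any $T$ in the commutant is then a multiplication operator $M_\phi$ whose commutation with $S_{\underline{t}}$ forces $\phi$ to be translation-invariant, hence a.e.\ constant. Either approach works, but since Cooper's theorem is freshly available, the cyclic-vector argument above is the cleanest.
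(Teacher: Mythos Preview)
Your primary argument is correct, but it is not the route the paper takes. Amusingly, the paper's proof is precisely your ``alternative route'': it first reduces to $d=1$ (since $S$ is a tensor product of one-parameter shifts), observes that any $T$ in the commutant commutes with the range projections $S_tS_t^{*}=M_{1_{[t,\infty)}}$ and hence with all of $L^\infty(\bbr_+)$ acting by multiplication, so $T=M_\phi$; then the covariance $S_t^{*}M_\phi S_t=M_\phi$ forces $\phi(x+t)=\phi(x)$ a.e., so $\phi$ is constant. This is entirely elementary and independent of the Cooper machinery.

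Your cyclic-vector approach via Theorem~\ref{thm1} is a legitimate and rather clean alternative: once you know $K_{\underline{z}}=\bbc f_{\underline{z}}$ and $L_{\underline{z}}=L^2(\bbr_+^d)$ (both of which are available at this point in the paper, the latter being the intermediate step in the proof of Theorem~\ref{thm1}), the commutant computation is immediate. There is no circularity, since Theorem~\ref{thm1} is proved without invoking Lemma~\ref{commutant}. The trade-off is that your argument imports the full strength of Cooper's theorem for a statement that admits a direct two-line proof; the paper's choice keeps the lemma self-contained and also makes the reduction to $d=1$ transparent, which your approach does not exploit.
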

 \textit{Proof.} Let $M:=\{S_{\underline{t}}, S_{\underline{t}}^{*}: \underline{t} \in \bbr_{+}^{d}\}^{'}$. We prove that $M=\bbc$. Clearly, it suffices to prove when $d=1$. 
 Let $d=1$, and let $T \in M$ be given. Then, $T$ commutes with $E_t:=S_tS_t^{*}=M_{1_{[0,t]}}$ for every $t \in \bbr_{+}$. This implies that $T$ commutes with $M_{1_{[a,b]}}$ for every $a,b \in (0,\infty)$. Hence, $T$ commutes with $\{M_{\phi}: \phi \in L^{\infty}(\bbr_+)\}$. Thus, $T$ must be a multiplication operator $M_{\phi}$ for some $\phi \in L^{\infty}(\bbr_+)$. The equation
 \begin{equation}
     \label{covariance}
     S_{t}^{*}M_\phi S_t= S_{t}^{*}TS_t=T=M_\phi
 \end{equation}
 for every $t>0$ implies that for every $t>0$, $\phi(x+t)=\phi(x)$ for almost all $x$. Hence, $\phi$ is constant which implies that $T$ is a scalar. This completes the proof. \hfill $\Box$

Let $V=\{ V_{ \underline{t}}\}_{ \underline{t} \in \bbr^d_+ }$ be a  strongly continuous, doubly commuting isometric representation  on a Hilbert space $\mathcal{H}$.  
For  each $i \in I_d,$ define  the projection $P_i$  on $\mathcal{H}$ by $P_i= \lim_{t \rightarrow \infty} V_{t} ^{(i)}V_{t} ^{(i)*}$ (in  SOT) which exists as $\{V_{t}^{(i)}V_{t}^{(i)*}\}$ is decreasing. Using the fact that $V$ is doubly commuting, we have 
\begin{align}\label{COMM} V_{t} ^{(i)}V_{t} ^{(i)*}V_{s} ^{(j)}V_{s} ^{(j)*}=V_{s} ^{(j)}V_{s} ^{(j)*}V_{t} ^{(i)}V_{t} ^{(i)*}, \, \, s,t \in \bbr_+, \,  i, j \in I_d\,\, \mbox{with}\,\, i \neq j.\end{align}

From this commutation relation, it follows that the projections $P_i$ and $P_j$ commute:
  \begin{align*} P_iP_j=P_jP_i \, \, \, i, j \in I_d.\end{align*}  
%the projections $\{ P_i \, : \, i \in I_d\}$ commute with each other and the Range of $ P_i=\bigcap _{t \in \bbr_+} V_{t} ^{(i)}(\mathcal{H})$ is reducing for the representation  $V.$
For $\alpha  \subset I_d$, define a   projection
     $P_{\alpha}$   on $\mathcal{H}$  by  \begin{align}\label{PROJ}
P_{\alpha}= \prod_{ i \in \alpha}(I-P_{i})\prod_{ j \notin  \alpha}P_{j}.\end{align}
From the commutation relation in Eq. \ref{COMM} and the definition of $P_{\alpha}$ in Eq. \ref{PROJ}, it follows that that $V_{\underline {t}}P_{\alpha}=P_{\alpha}V_{\underline{t}}$ for all $\underline{t} \in \bbr_+^d.$   Thus, $P_{\alpha}\mathcal{H}$ is a reducing subspace for the representation $V$. Let $\alpha_1, \alpha_2 \subset I_d$ be such that $\alpha_1 \neq \alpha_2;$  one can readily verify that $P_{\alpha_1}P_{\alpha_2}=0,$  and  the identity operator  $I$ can be expressed as 
$$I= \prod_{i \in I_d}(P_i \oplus (I-P_i))=\sum _{\alpha \subset I_d}P_{\alpha}=\bigoplus _{\alpha \subset I_d}P_{\alpha}.$$ Consequently, the Hilbert space $\mathcal{H}$ decomposes as
$\mathcal{H}=\bigoplus _{\alpha \subset I_d}\mathcal{H}_{\alpha},$ where $\mathcal{H}_{\alpha}=P_{\alpha}\mathcal{H}.$ 

%Next, define a representation $V_{\alpha}=\{ V_{\alpha}(\underline{t})\}_{\underline{t} \in \bbr_+^r}$  of $\bbr_+^r$  on $\mathcal{H}$ by     \begin{align*} V_{\alpha}(\underline{t})=V_{t_1} ^{(\alpha_1)}V_{t_2} ^{(\alpha_2)} \cdots V_{t_r} ^{(\alpha_r)}, \hspace{1cm} \underline{t}=(t_1, t_2, \cdots, t_r) \in \bbr_+^r.\end{align*}

For $i \in \alpha$ and $\xi \in \mathcal{H}_{\alpha},$ we have $$ \| V_{t} ^{(i)*}P_{\alpha}\xi\|^2=\langle V_{t} ^{(i)}V_{t} ^{(i)*}\xi,\xi\rangle \rightarrow \langle P_{i}P_{\alpha}\xi, \xi \rangle =0$$ as $t \to \infty$.
For $i \notin   \alpha$ and $t \in \bbr_+,$ $V_{t} ^{(i)}V_{t} ^{(i)*}P_{j}=P_{j}V_{t} ^{(i)}V_{t} ^{(i)*}=\lim_{s \rightarrow \infty} V_{s+t} ^{(j)}V_{s+t} ^{(j)*}= P_{j}$ for all $ j \notin \alpha,$ so the semigroup $ \{V^{(i)}_t\}_{t \in \bbr_+}$ acts unitarily on $P_j\clh$. Moreover, since $V^{(i)}_t$ leaves each $P_j$ invariant for $j \notin \alpha$ and due to the commutation relation in Eq. \ref{COMM},  it also preserve $P_{\alpha}\clh$, and thus,  $V^{(i)}_t\mathcal{H}_{\alpha}=\mathcal{H}_{\alpha}.$  Therefore, we conclude the following:

\begin{enumerate}
\item for $i \in \alpha,$ the semigroup $ \{V^{(i)}_t\}_{t \in \bbr_+}$ is strongly pure on $\mathcal{H}_{\alpha},$ and 
\item for $i \notin \alpha, \{V^{(i)}_t\}_{t \in \bbr_+}$ is a  unitary representation on $\mathcal{H}_{\alpha}.$
\end{enumerate}

Combining Thm. \ref{Cooper}, Lemma \ref{commutant} with the above discussion, we get the following theorem.

 \begin{theorem} \label{WDCV}
 $V=\{V_{ \underline{t} }\}_{ \underline{t} 
\in \bbr^d_+ }$ be a strongly continuous doubly commuting   semigroup of isometries   on  a Hilbert space $\mathcal{H}.$ Then,  there exist  unique $2^d$ orthogonal $V$-reducing subspaces $\{\mathcal{H}_{\alpha} \, : \, \alpha \subset I_d\}$ such that    
\begin{enumerate}[(1)]
\item $\mathcal{H}=\bigoplus _{\alpha \subset I_d}\mathcal{H}_{\alpha};$ 
\item   for each $\alpha  \subset I_d,  $ the representation $V_{\alpha}|_{\mathcal{H}_{\alpha}}$ is strongly pure and doubly commuting, and  the representation  $V_{\alpha^c}|_{\mathcal{H}_{\alpha}}$ is unitary, where for $\beta \subset I_d$, $V_{\beta}$  is the representation of $\bbr_{+}^{\beta}$ defined by  $V_{\beta}(\underline{t})=\prod_{i \in \beta}V^{(i)}_{t_i},\,\,\,\,\,\,  \underline{t}=(t_i)_{i \in \beta } \in \bbr_+^{\beta}.$
\end{enumerate}

Suppose that $\clh$ is separable. Then, for each $\alpha\subset I_d,$ there exists a Hilbert space $\mathcal{K}_{\alpha}$ and a strongly continuous unitary representation $W_{\alpha^c}=\{W_{\alpha^c}(\underline{t})\}_{\underline{t} \in \bbr_+^{\alpha^c}}$ on $\mathcal{K}_{\alpha}$ such that the restriction of $V=(V_{\alpha}, V_{\alpha^c})$ to  $\mathcal{H}_{\alpha}$ is unitarily equivanet  to  $(S\otimes 1_{\mathcal{K}_{\alpha}}, 1 \otimes W_{\alpha^c })$  on $L^2(\bbr_+^{\alpha}) \otimes \clk_{\alpha}.$

If $V$ is irreducible, then there exists a unique $\alpha \in I_d$ such that $\clh_\alpha \neq 0$, and in this case, $\clk_\alpha$ is one-dimensional and $W_{\alpha^c}$ is given by a character of $\bbr^{\alpha^c}$.
 \end{theorem}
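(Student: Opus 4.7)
The plan is to assemble the observations already laid out in the discussion preceding the statement, and then invoke Cooper's theorem (Theorem \ref{Cooper}) together with Lemma \ref{commutant} for the refinements in the separable and irreducible cases.

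For parts (1) and (2), the work is essentially complete above. The projections $P_\alpha$ defined in Eq.~\ref{PROJ} form a family of mutually orthogonal projections summing to the identity, each commuting with every $V_{\underline{t}}$; thus setting $\clh_\alpha := P_\alpha \clh$ yields the required orthogonal direct sum decomposition into $V$-reducing subspaces. Uniqueness is immediate because each $P_i$ is uniquely characterised as the SOT-limit of $V_t^{(i)}V_t^{(i)*}$, and the $P_\alpha$ are polynomial expressions in the $P_i$. The two computations carried out just before the theorem statement then identify $V_\alpha|_{\clh_\alpha}$ as strongly pure and each $V^{(j)}|_{\clh_\alpha}$ as unitary for $j \notin \alpha$; double commutation is inherited from $V$.

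For the separable refinement, I would apply Theorem \ref{Cooper} to the strongly pure, doubly commuting isometric representation $V_\alpha|_{\clh_\alpha}$ of $\bbr_+^\alpha$, producing a Hilbert space $\clk_\alpha$ and a unitary $U_\alpha : \clh_\alpha \to L^2(\bbr_+^\alpha) \otimes \clk_\alpha$ intertwining $V_\alpha|_{\clh_\alpha}$ with $\{S_{\underline{t}} \otimes 1\}_{\underline{t} \in \bbr_+^\alpha}$. Since the unitary semigroup $V_{\alpha^c}|_{\clh_\alpha}$ doubly commutes with $V_\alpha|_{\clh_\alpha}$, after conjugation by $U_\alpha$ each $V_{\alpha^c}(\underline{s})$ becomes a unitary on $L^2(\bbr_+^\alpha) \otimes \clk_\alpha$ commuting with every $S_{\underline{t}} \otimes 1$ and every $S_{\underline{t}}^* \otimes 1$. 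Combining Lemma \ref{commutant} with the standard commutation theorem (the commutant of $A \otimes 1$ on a tensor product $\clh_1 \otimes \clh_2$ equals $A' \otimes B(\clh_2)$), such an operator must take the form $1 \otimes W_{\alpha^c}(\underline{s})$ for a uniquely determined unitary $W_{\alpha^c}(\underline{s})$ on $\clk_\alpha$. The semigroup law and strong continuity of $W_{\alpha^c}$ transfer from $V_{\alpha^c}$ automatically.

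For the irreducibility statement, the mutually orthogonal reducing decomposition $\clh = \bigoplus_\alpha \clh_\alpha$ forces exactly one summand $\clh_\alpha$ to be nonzero once $V$ is irreducible. On that summand, the commutant of $V$ consists (by the commutant step above) of operators of the form $1 \otimes T$ with $T \in B(\clk_\alpha)$ commuting with the abelian family $\{W_{\alpha^c}(\underline{s}) : \underline{s} \in \bbr_+^{\alpha^c}\}$. Since the $W_{\alpha^c}(\underline{s})$ themselves lie in this commutant, triviality forces $\dim \clk_\alpha = 1$; then $W_{\alpha^c}$ is a strongly continuous semigroup of unimodular scalars on $\bbr_+^{\alpha^c}$, which extends uniquely (via $W_{\alpha^c}(-\underline{s}) := \overline{W_{\alpha^c}(\underline{s})}$ on each one-parameter subgroup) to a character of $\bbr^{\alpha^c}$. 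The main obstacle, slight as it is, is the tensor-product commutant step: one must upgrade Lemma \ref{commutant} on $L^2(\bbr_+^\alpha)$ to the statement that on $L^2(\bbr_+^\alpha) \otimes \clk_\alpha$ the commutant of $\{S_{\underline{t}} \otimes 1, S_{\underline{t}}^* \otimes 1 : \underline{t} \in \bbr_+^\alpha\}$ is exactly $1 \otimes B(\clk_\alpha)$. This is a routine application of the commutation theorem for von Neumann algebras, but it is the only place in the proof where anything beyond the observations collected in the preceding discussion is really required.
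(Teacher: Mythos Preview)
Your proposal is correct and follows essentially the same approach as the paper, which simply states that the theorem follows by ``combining Thm.~\ref{Cooper}, Lemma~\ref{commutant} with the above discussion''; you have faithfully fleshed out precisely those details, including the tensor-commutant step the paper leaves implicit. The only place your write-up is slightly thin is the uniqueness argument---to complete it you should note that on any competing summand $\clh_\alpha'$ the purity/unitarity hypotheses force $P_i|_{\clh_\alpha'}$ to be $0$ or $I$ according as $i\in\alpha$ or $i\notin\alpha$, whence $P_\alpha|_{\clh_\alpha'}=I$ and $\clh_\alpha'\subset P_\alpha\clh$---but this is exactly the consequence of your observation that the $P_i$ are intrinsic to $V$.
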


 \section{Fell topology}
 Let $S$ be a  topological semigroup. We follow \cite{Ernest} to define the Fell topology on the `set of equivalence classes of irreducible isometric representations of S' by making use of railway representations.  Recall that an isometric representation $V$ of $S$ on a separable Hilbert space $\clh$ is said to be irreducible if the commutant $\{V_s,V_{s}^{*}:s \in S\}^{'}=\bbc$. For two isometric representations $V$ and $W$, we write $V \sim W$ if $V$ and $W$ are unitarily equivalent. 
\begin{definition}
An isometric representation $V$ of $S$ on a separable infinite dimensional Hilbert space $\clh$ is called a \textit{railway representation} if $V$ is unitarily equivalent to $\displaystyle \bigoplus_{n=1}^{\infty}V_{0}$ for an irreducible isometric representation $V_{0}$ of $S$. 
\end{definition}
If $V$ is a railway representation and $\displaystyle V=\bigoplus_{n=1}^{\infty}V_0$ for an irreducible representation $V_0$, then, by Schur's lemma, $V_0$ is unique (up to a unitary equivalence). We call $V_0$ the irreducible isometric representation that corresponds to $V$. 
Let $\clh$ be an infinite dimensional separable Hilbert space that is fixed once and for all. The set of (strongly continuous) railway representations of $S$ on $\clh$ is denoted by $\mathscr{R}(S,\clh)$.   By an abuse of notation, we write $\mathscr{R}(S,\clh)$ as $\mathscr{R}(S)$.  The quotient $\frac{\mathscr{R}(S)}{\sim}$ will be denoted by $\mathcal{R}(S)$. As railway representations and irreducible representations are in one-one correspondence, we call $\mathcal{R}(S)$  `the set of equivalence classes of irreducible isometric representations of $S$'. 

We introduce the following topology on $\mathscr{R}(S)$.

 Let $V\in \mathscr{R}(S)$, $\epsilon>0$, $K$ a compact subset  of $S$, and let $F$ be a non-empty finite subset  of $\clh$.  Define
 \[X(V, K, \epsilon, F):=\{ W\in \mathscr{R}(S): \textrm{ $\max_{\xi\in F}\Big(\sup_{x\in K}\big( \{||(V_{x}-W_{x})\xi||$ ,$ ||(V_{x}^{*}-W_{x}^{*})\xi||\}\big)\Big)<\epsilon$}\}.\] 
 The sets of the form $X(V, K, \epsilon,F )$ form a basis for a topology on $\mathscr{R}(S)$. The Fell topology on $\clr(S)$ is defined as the quotient topology.

 Let $[V]\in \clr(S)$, $\epsilon>0$, $K$ a compact subset  of $S$, and let $F \subset \clh$ be  a finite  set of unit vectors.   Define
 \[B([V], K, \epsilon, F):=\Big\{[W]\in \clr(S):\textrm{$\exists \widetilde{W} \sim W ~s.t. \max_{\xi\in F}(\sup_{x\in K}( ||V_{x}-\widetilde{W}_{x})\xi||, ||V_x^*\xi-\widetilde{W}_x^*\xi||)<\epsilon$}.\Big\}\] 
 Notice that the sets of the form $B([V], K, \epsilon, F)$ form a basis for  the Fell topology on $\clr(S)$.

%For the rest of this section, we shall assume that $S=\bbr^{d}_{+}$ for some $d\geq 1$. We assume that $d$ is finite.  
For $d \in \bbn \cup \{\infty\}$, 
Let \[\clr_{0}(\dplus):=\{[V] \in \clr(\dplus): \textrm{$V$ is doubly commuting}\}.\]  
%As doubly railway representations (up to unitary equivalences) are in one-one correspondence with irreducible doubly isometric representations (up to unitary equivalence), we call $\clr_0(\bbr_{+}^{d})$  the set of irreducible doubly commuting isometric representations. 

In this section, we assume that $d$ is finite.  We study $\clr_{0}(\dplus)$ with the subspace topology inherited from $\clr(\dplus)$. We call the subspace topology on $\clr_0(\dplus)$  the Fell topology on the set of equivalence classes of irreducible, doubly commuting representations of $\bbr_{+}^{d}$. We prove that $\mathcal{R}_0(\dplus)$ is $T_0$. 

%For the rest of this section, let $\{S_{t}\}_{t\geq 0}$ be the one parameter shift semigroup on $L^{2}(\bbr_{+})$. For $d \geq 1$,  $\{S_{\underline{t}}\}_{\ut \in \bbr_{+}^{d}}$ denotes the shift semigroup on $L^{2}(\bbr_{+}^{d})=\underbrace{L^2(\bbr_{+})\otimes L^2(\bbr_{+})\otimes \cdots \otimes L^2(\bbr_{+})}_{d~ \textrm{times}}$ defined by 
%\[
%S_{\underline{t}}=S_{t_1}\otimes S_{t_2}\otimes \cdots \otimes S_{t_d}.
%\]

For a  subset $A\subset I_d$, let $Y_{A}$ denote the set of all maps from $ A^c:=I_d \setminus A$ into $\bbr_{+}$. %Note that we may identify $Y_{A}$ with $\bbr^{d-\#A}$ as a set, where $\#A$ denotes the cardinality of $A$. 
%When $A=\{1,2,\cdots ,d\}$, we define $Y_{A}:=\{\emptyset\}$. 
For $A \subset I_d$ and  $\lambda\in Y_A$,  define an isometric representation $V^{(A,\lambda)}$ of $\dplus$ on $L^{2}(\dplus)$ by
\[V^{(A,\lambda)}_{\underline{t}}f(\underline{x}):=\begin{cases} e^{i \sum_{j\in A^c}t_{j}\lambda_{j}}f(\underline{x}-\sum_{k\in A} t_{k}e_{k}), & \textrm{ if $x-\sum_{k\in A}t_{k}e_{k}\in \dplus$},\\
0, & \textrm{ otherwise}\end{cases}\] for $\underline{t}=(t_1, t_2,\cdots , t_d)\in\dplus$. The usual convention of interpreting an empty sum as $0$ applies to the above formula when either $A$ or $A^c$ is empty. 

For $A \subset I_d$ and  $\lambda\in Y_A$, set $m:=|A|$, and   define an isometric representation $V^{(A,\lambda,m)}$ of $\dplus$ on $L^{2}(\bbr_{+}^{m})$ by
\[V^{(A,\lambda,m)}_{\underline{t}}f(\underline{x}):=\begin{cases} e^{i \sum_{j\in A^c}t_{j}\lambda_{j}}f(\underline{x}-\sum_{k\in A} t_{k}e_{k}), & \textrm{ if $x-\sum_{k\in A}t_{k}e_{k}\in \bbr_{+}^{m}$},\\
0, & \textrm{ otherwise}\end{cases}\] for $\underline{t}=(t_1, t_2,\cdots , t_d)\in\dplus$. Note that $V^{(A,\lambda,m)}\otimes 1_{L^{2}(\bbr_{+}^{d-m})}$ is unitary equivalent to $V^{(A,\lambda)}$. 

Let $\clk$ be an infinite dimensional Hilbert space, and we take $L^2(\bbr_{+}^{d}) \otimes \clk$ to be the Hilbert space $\clh$ that is fixed so far. 

\begin{theorem}
   With the foregoing notation, the map 
    \[\Psi:\coprod_{A \subset \{1,2\cdots ,d\}}Y_A \to [V^{(A,\lambda)}\otimes 1_{\clk}] \in \clr_{0}(\dplus)\] is a bijection.
    Also, the Fell topology on $\clr_0(\dplus)$ is $T_0$.\end{theorem}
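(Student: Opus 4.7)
The plan is to establish bijectivity of $\Psi$ first, and then derive the $T_0$ property of the Fell topology on $\clr_0(\bbr_+^d)$.

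For bijectivity, I would begin by verifying that $V^{(A,\lambda,m)}$ on $L^2(\bbr_+^m)$ is irreducible: in the $A$-directions it is the shift semigroup, whose commutant is trivial by Lemma \ref{commutant}, while in the $A^c$-directions it acts by scalars. Consequently $V^{(A,\lambda)} \otimes 1_\clk$ is unitarily equivalent to $V^{(A,\lambda,m)}$ tensored with the identity on the infinite-dimensional separable space $L^2(\bbr_+^{d-m}) \otimes \clk$, so it is a railway representation and $\Psi$ is well-defined. Surjectivity follows at once from Theorem \ref{WDCV}: an irreducible doubly commuting representation is equivalent to $V^{(\alpha,\lambda,|\alpha|)}$ with $\alpha \subset I_d$ the unique subset with $\clh_\alpha \neq 0$ and $\lambda$ encoding the character $W_{\alpha^c}$ on the one-dimensional $\clk_\alpha$. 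Injectivity follows from two unitary invariants: $A$ is recovered from the projections $P_i = \lim_t V_{te_i}V_{te_i}^*$ (which equal $0$ on $A$ and the identity on $A^c$), and $\lambda$ is recovered from the scalar values $V_{te_j} = e^{it\lambda_j}I$ for $j \in A^c$.

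For the $T_0$ property, I would show that the specialization preorder on $\clr_0(\bbr_+^d)$ is antisymmetric. The driving observation is a scalar rigidity: for $j \in A^c$, the operator $V^{(A,\lambda)}_{te_j} = e^{it\lambda_j}I$ is scalar and therefore fixed under every unitary conjugation. Assume $[V_1] \in \overline{\{[V_2]\}}$ with $V_i = V^{(A_i,\lambda_i)} \otimes 1_\clk$. If some index $i$ lies in $A_1 \cap A_2^c$, then for any unit $\xi$ one has $\|V_{1,Te_i}^* \xi\| \to 0$ as $T \to \infty$ by purity in direction $i$, while $\|\widetilde{V}_{2,Te_i}^* \xi\| = 1$ for every unitary equivalent $\widetilde{V}_2$ of $V_2$ by scalar rigidity. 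Picking $T$ with $\|V_{1,Te_i}^* \xi\| < 1/4$, the basic neighborhood $B([V_1], \{Te_i\}, 1/4, \{\xi\})$ of $[V_1]$ contains no unitary equivalent of $V_2$, contradicting $[V_1] \in \overline{\{[V_2]\}}$. Hence $A_1 \subset A_2$; if additionally $[V_2] \in \overline{\{[V_1]\}}$, the same argument with $V_1$ and $V_2$ interchanged gives $A_2 \subset A_1$, so $A_1 = A_2 = A$.

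Once $A_1 = A_2 = A$, for each $j \in A^c$ the approximation hypothesis combined with scalar rigidity yields $\sup_{t \in [0,1]}|e^{it\lambda_{1,j}} - e^{it\lambda_{2,j}}| \leq \epsilon$ for every $\epsilon > 0$, which forces $\lambda_{1,j} = \lambda_{2,j}$. Thus $(A_1,\lambda_1) = (A_2,\lambda_2)$, the preorder is antisymmetric, and $\clr_0(\bbr_+^d)$ is $T_0$. I expect the main conceptual step to be isolating this scalar rigidity in the unitary directions; once that is recognised, the remainder is a clean quantitative contrast between the pure shifts and the unitary characters, and the bijection follows from Theorem \ref{WDCV} with almost no additional work.
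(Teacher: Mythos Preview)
Your proposal is correct and follows essentially the same route as the paper: bijectivity via Theorem~\ref{WDCV} and Lemma~\ref{commutant}, and $T_0$ via the contrast between the pure shift directions and the scalar unitary directions (your ``scalar rigidity''). The only cosmetic difference is that you package the $T_0$ argument as antisymmetry of the specialization preorder, whereas the paper argues directly by exhibiting, for each pair $(A,\lambda)\neq(B,\mu)$, a basic open set containing one point and not the other; the underlying estimates and the separating neighbourhoods are the same.
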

\begin{proof}
    Let $V$ be a doubly commuting irreducible representation of $\dplus$ on a Hilbert space $K$. Thanks to Thm. \ref{WDCV}, Thm. \ref{Cooper} and the fact that an irreducible unitary representation of $\bbr^n$ is  one dimensional and is given by a character of $\bbr^n$, it follows that there exists a unique $A \subset I_d$ and a unique $\lambda \in Y_A$ such that, up to a unitary equivalence, $K=L^{2}(\bbr_{+}^{m})$ (where $m=|A|$) and $V_{\underline{t}}=V^{(A,\lambda,m)}$ for $\underline{t} \in \bbr_{+}^{d}$.
    \iffalse
    is given by 
    \[V_{\underline{t}}f(\underline{x}):=\begin{cases} e^{i \sum_{j\in A^c}t_{j}\lambda_{j}}f(\underline{x}-\sum_{k\in A} t_{k}e_{k}), & \textrm{ if $x-\sum_{k\in A}t_{k}e_{k}\in \bbr_{+}^{m}$},\\
0, & \textrm{ otherwise.}\end{cases}\]
\fi
Since $V^{(A,\lambda,m)}\otimes 1_{L^2(\bbr_+^{d-m})}$ is unitarily equivalent to $V^{(A,\lambda)}$, it follows that  a doubly commuting railway representation of $\dplus$ is of the form $V^{(A,\lambda)}\otimes 1_{\clk}$ for a unique $A \subset I_d$ and a unique $\lambda\in Y_{A}$. Also, for $A \subset I_d$ and $\lambda \in Y_A$, thanks to Lemma \ref{commutant}, $V^{(A,\lambda,|A|)}$ is irreducible. Hence,    $V^{(A,\lambda)}\otimes 1_{\clk}$ is a railway representation for every $A \subset I_d$ and $\lambda \in Y_A$.  Hence, $\Psi$ is a bijection. 

Next, we show that the Fell topology on $\clr_{0}(\dplus)$ is $T_0$. Let $A, B \subset I_d$, let $\lambda \in Y_A$ and $\mu \in Y_B$ be given. Denote $V^{(A,\lambda)}\otimes \clk$ by $V$ and $V^{(B,\mu)}\otimes 1_\clk$ by $W$.

Suppose $k \in A \cap B^{c}$. Let $\xi$ be a unit vector in $L^{2}(\bbr_{+}^{d}) \otimes \clk$. We claim that there exists $t>0$ such that $[W] \notin B([V],\{te_k\},\frac{1}{2},\{\xi\})$. Otherwise, for every $t>0$, there exists a unitary operator $U_t \in B(L^2(\bbr_+^d)\otimes \clk)$ such that \[
||V_{te_k}^*\xi-U_{t}W_{te_k}^{*}U_{t}^{*}\xi||<\frac{1}{2}.
\]
However, $W_{te_k}=e^{i\mu_k t}$. Hence, for every $t>0$,
\[
1=||e^{-i\mu_kt}\xi||=||U_tW_{te_k}^{*}U_t^*\xi|| \leq ||U_tW_{te_k}^{*}U_t^{*}\xi-V_{te_k}^{*}\xi||+||V_{te_k}^{*}\xi||<\frac{1}{2}+||V_{te_k}^{*}\xi||.
\]
Notice that since $k \in A$, $\{V_{se_k}\}_{s \geq 0}$ is pure. Thus, letting $t \to \infty$ in the above inequality, we get $1 \leq \frac{1}{2}$ which is a contradiction. 

Thus, if $A \cap B^{c} \neq \emptyset$, then we can find an open set that contains $[V]$ but does not contain $[W]$. 
For the same reason, if $A^{c} \cap B \neq \emptyset$, we can find an open set that contains $[W]$ but  does not contain $[W]$. 

Now, consider the case $A=B$. Suppose there exists $k>0$ such that $\lambda_k \neq \mu_k$. Choose $t>0$ such that \[||e^{i\lambda_k t}-e^{i\mu_k t}|| \geq \frac{1}{2}.\] Let $\xi \in L^2(\bbr_{+}^{d})\otimes \clk$ be a unit vector. Then,  $[W] \notin B([V],\{te_k\},\frac{1}{2},\{\xi\})$. Hence, if $\lambda \neq \mu$, then there exists an open set that contains $[V]$ but that does not contain $[W]$. 

 We can now conclude  that given two distinct points $[V],[W] \in \mathcal{R}_0(\bbr_{+}^{d})$,  there exists an open set which contains one of them but not the other. Hence, $\mathcal{R}_0(\bbr_{+}^{d})$ is not $T_0$. 
\end{proof}

The following proposition says a bit more regarding the Fell topology. We can compute the closure of each point explicitly. 
\begin{prop}
\label{closure}
    For a subset $A \subset I_d$ and $\lambda \in Y_A$, we have 
    \[\overline{\{[V^{(A,\lambda)} \otimes 1_{\clk}]\}}=\{[V^{(B,\mu)}\otimes 1_{\clk}]: B \subset A, \mu \in Y_{B} \textrm{~such that $\mu|_{A^c}=\lambda$}\}.\]
\end{prop}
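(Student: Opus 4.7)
The plan is to prove both inclusions, writing $V = V^{(A,\lambda)} \otimes 1_\clk$ and $W = V^{(B,\mu)} \otimes 1_\clk$. The inclusion $\subseteq$ is a variant of the $T_0$ argument from the preceding theorem: suppose $[W] \in \overline{\{[V]\}}$. If some $k \in B \cap A^c$ existed, then $W_{te_k}$ is a pure shift in the $k$-th coordinate (so $\|W^*_{te_k}\xi\| \to 0$ for any unit $\xi$), while $V_{te_k}$ and every unitary conjugate thereof is multiplication by $e^{i\lambda_k t}$, a scalar of modulus one. For large $t$ this forces $\|W^*_{te_k}\xi - UV^*_{te_k}U^*\xi\| \geq 1/2$ for every unitary $U$, contradicting $[V] \in B([W], \{te_k\}, 1/2, \{\xi\})$; hence $B \subseteq A$. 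Likewise, if $\mu_k \neq \lambda_k$ for some $k \in A^c$, both $V_{te_k}$ and $W_{te_k}$ are scalar characters differing in phase, and choosing $t$ with $|e^{i\mu_k t} - e^{i\lambda_k t}| \geq 1$ yields a separating neighbourhood. Thus $\mu|_{A^c} = \lambda$.

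For $\supseteq$, fix $B \subsetneq A$ and $\mu \in Y_B$ with $\mu|_{A^c} = \lambda$ (the case $B = A$ gives $\mu = \lambda$ trivially). Given a basic Fell neighbourhood $B([W], K, \epsilon, F)$, the task is to produce a unitary $U$ on $\clh$ with $\|UV_xU^*\xi - W_x\xi\| < \epsilon$ and the analogous adjoint bound for every $\xi \in F$ and $x \in K$. The strategy is to approximate $W$'s character action in each direction $k \in A \setminus B$ (where $W_{te_k}$ is scalar multiplication by $e^{i\mu_k t}$) by $V$'s shift action (which is $S_t^{(k)}$) using long oscillating bumps: the unit vector
\[
\phi_n^{(k)}(x_k) := \frac{1}{\sqrt{n}}\, e^{-i\mu_k x_k}\, \mathbf{1}_{[n,2n]}(x_k) \in L^2(\bbr_+)
\]
satisfies $S_t^{(k)} \phi_n^{(k)} = e^{i\mu_k t} \widetilde\phi_{n,t}^{(k)}$ with $\widetilde\phi_{n,t}^{(k)}$ supported on $[n+t, 2n+t]$; a direct computation gives $\|S_t^{(k)}\phi_n^{(k)} - e^{i\mu_k t}\phi_n^{(k)}\|^2 = 2t/n$, and the same bound for $S_t^{(k)*}$.

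Let $\clh_F$ be the smallest $W$-reducing subspace of $\clh$ containing $F$. By Thm.~\ref{WDCV} together with the irreducibility of $V^{(B,\mu,|B|)}$, there is a multiplicity Hilbert space $\clk_F$ and a unitary $\Phi : L^2(\bbr_+^{|B|}) \otimes \clk_F \to \clh_F$ intertwining $V^{(B,\mu,|B|)} \otimes 1_{\clk_F}$ with $W|_{\clh_F}$. Fix a unit vector $\beta \in L^2(\bbr_+^{|A^c|})$, set $\phi_n := \bigotimes_{k \in A\setminus B} \phi_n^{(k)}$, and choose an isometry $\iota : \clk_F \to \clk$ with infinite-codimensional range. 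Using the factorisation $\clh = L^2(\bbr_+^{|B|}) \otimes L^2(\bbr_+^{|A\setminus B|}) \otimes L^2(\bbr_+^{|A^c|}) \otimes \clk$, define the isometry
\[
J_n : L^2(\bbr_+^{|B|}) \otimes \clk_F \to \clh, \qquad J_n(\alpha \otimes w) = \alpha \otimes \phi_n \otimes \beta \otimes \iota(w).
\]
Unwinding the actions of $V_x$ and $V^{(B,\mu,|B|)}_x$ on a pure tensor $\alpha \otimes w$, the $B$-shifts agree, the $A^c$-character prefactors cancel via $\mu|_{A^c} = \lambda$, and the remaining discrepancy sits entirely in the $A \setminus B$ factor, where the bump estimate yields
\[
\|V_x \circ J_n - J_n \circ (V^{(B,\mu,|B|)}_x \otimes 1_{\clk_F})\|_{\mathrm{op}} = O(n^{-1/2})
\]
uniformly for $x \in K$, with the analogous bound for adjoints. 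Extend $J_n \circ \Phi^{-1}$ to a unitary $U^* : \clh \to \clh$; this is possible by dimension counting since $\mathrm{range}(J_n)$ has infinite codimension in $\clh$, and one may arrange $\clh_F$ to have infinite codimension via a preliminary doubling $\clh \cong \clh \oplus \clh$ exploiting the infinite multiplicity of $\clk$. For $\xi \in F \subseteq \clh_F$ and $x \in K$,
\[
\|W_x\xi - UV_xU^*\xi\| = \|U^*W_x\xi - V_xU^*\xi\| = \bigl\|J_n(V^{(B,\mu,|B|)}_x \otimes 1)\Phi^{-1}\xi - V_xJ_n\Phi^{-1}\xi\bigr\| = O(n^{-1/2}),
\]
which is $<\epsilon$ for $n$ sufficiently large; the adjoint case is identical.

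The main technical obstacle is setting up $J_n$ so that the three coordinate groups ($B$, $A\setminus B$, $A^c$) behave correctly under both representations and the hypothesis $\mu|_{A^c}=\lambda$ can be invoked to align character prefactors, leaving only the $A\setminus B$ factor for the bump approximation. Once $J_n$ is in place, extending it to a unitary is routine, and the final estimate reduces to controlling the bump error uniformly on the compact set $K$.
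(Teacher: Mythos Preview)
The paper explicitly does \emph{not} prove this proposition: immediately after stating it, the authors write ``We do not prove the above proposition but only compute the closure of the singleton $\{[V^{(A,\lambda)}\otimes 1_{\clk}]\}$ when $d=1$ and $A=\{1\}$,'' and then give only Prop.~\ref{shift_dense}. So your attempt goes well beyond what the paper does, and the core ideas are sound. The $\subseteq$ direction is correct and mirrors the $T_0$ argument. For $\supseteq$, your bump vectors $\phi_n^{(k)}$, the operator $J_n$, and the estimate $\|V_xJ_n-J_n(V^{(B,\mu,|B|)}_x\otimes 1)\|=O(n^{-1/2})$ are all right; this is a clean multivariable substitute for the paper's one-variable approximant $g(x)=\sqrt{2\delta}\,e^{-(\delta+i\lambda)x}$.

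There is, however, one genuine gap: the extension of $J_n\circ\Phi^{-1}$ to a unitary $U^*:\clh\to\clh$. You need $\dim\clh_F^\perp=\dim\mathrm{range}(J_n)^\perp=\aleph_0$, but for a finite $F$ it can happen that $\clh_F=\clh$ (take $\xi\in F$ with full Schmidt rank in $L^2(\bbr_+^{|B|})\otimes\clk_0$). Your ``preliminary doubling $\clh\cong\clh\oplus\clh$'' does not fix this as stated: the set $F$ is fixed in $\clh$, and replacing $W$ by $W\oplus W$ via a unitary does not force $F$ into one summand. Two clean repairs: (i) first approximate each $\xi\in F$ within $\epsilon/4$ by a vector of finite Schmidt rank, so that the new $\clk_{F'}$ is finite-dimensional and $\clh_{F'}^\perp$ is infinite-dimensional; the extra $\epsilon/4$ is absorbed by the triangle inequality. (ii) Imitate the paper's device in Prop.~\ref{shift_dense}: assume $F$ orthonormal, pick an arbitrary unitary sending $\xi_j$ into the $j$-th summand of $\bigoplus_{n}L^2(\bbr_+^{|B|})$, then act summand-by-summand; this leaves infinitely many summands free and makes the extension automatic. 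Either way the rest of your argument goes through unchanged.
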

We can also see using the above proposition that the Fell topology on $\mathcal{R}_0(\bbr_{+})$ is not $T_0$. For, a space being $T_0$ is the same as saying that  distinct points have distinct closures. We do not prove the above proposition but only compute the closure of the singleton $\{[V^{(A,\lambda)}\otimes 1_{\clk}]\}$ when $d=1$ and $A=\{1\}$. This forms the content of the following proposition.

\begin{prop}    
\label{shift_dense} Let $\{S_t\}_{t \geq 0}$ be the shift semigroup on $L^2(\bbr_+)$. 
For $t \geq 0$, let $\widetilde{S}_t=S_t \otimes 1_{\clk}$.  Let $\lambda\in\bbr$, $\epsilon>0$ and $ a>0 $ be given. Suppose $\mathcal{F}$ is a finite set  of unit vectors in $ L^{2}(\bbr_{+})\otimes\clk$. Then, there exists a unitary $W$ on $L^{2}(\bbr_{+})\otimes\clk$ such that for $\xi \in \mathcal{F}$ and $t \in [0,a]$, 
\[\max\Big\{||(e^{i\lambda t}1_{L^{2}(\bbr_{+})\otimes\clk}-W\widetilde{S}_{t}W^{*})\xi||,||(e^{-i\lambda t}1_{L^{2}(\bbr_{+})\otimes\clk}-W\widetilde{S}_{t}^{*}W^{*})\xi||\Big\}<\epsilon.\] 
Hence, $[\widetilde{S}]$ is dense in $\mathcal{R}_0(\bbr_+)$. 
\end{prop}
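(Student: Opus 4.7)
The plan is to exploit the spectral identity $S_t^{*}f_z = e^{-zt}f_z$ from Eq. \ref{S5} together with the infinite multiplicity supplied by the fixed infinite-dimensional space $\clk$. The key computation, which I would carry out first, is that for $z = r + i\lambda$ with $r > 0$ small, the unit vector $f_z$ is an approximate eigenvector for both $S_t$ and $S_t^*$ with eigenvalues $e^{i\lambda t}$ and $e^{-i\lambda t}$ respectively. A direct inner product calculation, using $\|S_t f_z\| = 1$ and the adjoint identity $\langle S_t f_z, f_z\rangle = \langle f_z, S_t^* f_z\rangle = e^{-\overline{z}t}$, yields
\[
\|S_t f_z - e^{i\lambda t} f_z\|^2 = 2(1 - e^{-rt}) \qquad \text{and} \qquad \|S_t^* f_z - e^{-i\lambda t} f_z\| = 1 - e^{-rt},
\]
both of which tend to $0$ as $r \to 0^+$, uniformly for $t \in [0, a]$.

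Next, to handle the finite set $\mathcal{F} = \{\xi_1, \ldots, \xi_n\}$, I let $m := \dim\mathrm{span}(\mathcal{F})$, fix an orthonormal basis $\{v_1, \ldots, v_m\}$ of $\mathrm{span}(\mathcal{F})$, and write $\xi_j = \sum_{k=1}^m c_{jk}v_k$, so that $\sum_k |c_{jk}|^2 = 1$. Because $\clk$ is infinite-dimensional, I pick orthonormal $e_1, \ldots, e_m \in \clk$ and set $u_k := f_{r+i\lambda} \otimes e_k$; these are orthonormal in $L^2(\bbr_+)\otimes\clk$. Since $L^2(\bbr_+)\otimes\clk$ is a separable infinite-dimensional Hilbert space, the isometry $v_k \mapsto u_k$ extends to a unitary $W$ on $L^2(\bbr_+)\otimes\clk$ with $W^*v_k = u_k$ (by fixing any unitary between the two separable infinite-dimensional orthogonal complements).

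With this $W$, I have $W^*\xi_j = \sum_k c_{jk}u_k$ and
\[
\widetilde{S}_t u_k - e^{i\lambda t}u_k = (S_t f_{r+i\lambda} - e^{i\lambda t} f_{r+i\lambda}) \otimes e_k
\]
(and analogously for $\widetilde{S}_t^*$). Applying Cauchy--Schwarz to $\sum_k c_{jk}(\widetilde{S}_t u_k - e^{i\lambda t}u_k)$ together with $\sum_k |c_{jk}|^2 = 1$ gives
\[
\|W\widetilde{S}_t W^*\xi_j - e^{i\lambda t}\xi_j\| \leq \sqrt{m}\cdot\sqrt{2(1-e^{-ra})}, \qquad \|W\widetilde{S}_t^* W^*\xi_j - e^{-i\lambda t}\xi_j\| \leq \sqrt{m}\,(1-e^{-ra}),
\]
for all $t \in [0,a]$ and all $\xi_j \in \mathcal{F}$, and choosing $r>0$ sufficiently small makes both bounds less than $\epsilon$, yielding the desired $W$.

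For the density statement, any compact $K \subset \bbr_+$ lies in $[0,a]$ for some $a$, so the construction places $[\widetilde{S}]$ inside every basic neighborhood $B([V^{(\emptyset,\lambda)}\otimes 1_\clk], K, \epsilon, \mathcal{F})$. Combined with the classification (implicit in Thm. \ref{WDCV}) that $\clr_0(\bbr_+) = \{[\widetilde{S}]\} \cup \{[V^{(\emptyset,\lambda)}\otimes 1_\clk] : \lambda \in \bbr\}$, this shows $\overline{\{[\widetilde{S}]\}} = \clr_0(\bbr_+)$. There is no serious obstacle: the main ingredients (the explicit eigenvector identity, the infinite multiplicity of $\clk$, and elementary extension of an isometry between finite-dimensional subspaces to a unitary) are routine, and the only point requiring care is passing from a possibly non-orthonormal $\mathcal{F}$ to the orthonormal frame $\{v_k\}$ and controlling the resulting sum by Cauchy--Schwarz.
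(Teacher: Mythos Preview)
Your proof is correct and follows essentially the same approach as the paper: both use the approximate eigenvector $f_{r+i\lambda}$ and exploit the infinite multiplicity of $\clk$ to build a unitary sending the given vectors to copies of $f_{r+i\lambda}$ in orthogonal slots. The only cosmetic differences are that the paper simply declares ``without loss of generality $\mathcal{F}$ is orthonormal'' and routes through a direct-sum decomposition $\bigoplus_n L^2(\bbr_+)$, whereas you pass explicitly to an orthonormal basis of $\mathrm{span}(\mathcal{F})$ and bound via Cauchy--Schwarz; in fact your $\sqrt{m}$ factor is unnecessary, since the vectors $(S_t f_z - e^{i\lambda t}f_z)\otimes e_k$ are mutually orthogonal and Pythagoras gives the exact norm $\|S_t f_z - e^{i\lambda t}f_z\|$ directly.
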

\begin{proof}  
Choose $\delta>0$ such that \[\sup_{t\in[0,a]}|1-e^{-\delta t}|<\frac{\epsilon^{2}}{2}.\]
Let \[g(x):=\sqrt{2\delta}e^{-(\delta+i\lambda)x}\] for $x\in\bbr_{+}$. Note that $g$ is a unit vector in $L^{2}(\bbr_{+})$, and $S_{t}^{*}g=e^{-(\delta+i\lambda t)}g$. Hence, for $t\in[0.a]$, 
\begin{align*}
||e^{i\lambda t}g-S_{t}g||^{2}=& || e^{i\lambda t}||^{2}+||g||^{2}-2 Re(e^{i\lambda t}\int_{0}^{\infty}S_{t}^{*} g(x) \overline{g(x)} dx)\\
=& 2(1-Re(e^{i\lambda t} e^{-i\lambda t}e^{-\delta t}||g||^{2})\\
=& 2(1-e^{-\delta t})< \epsilon^{2}
\end{align*}
It may similarly be observed that for $t \in [0,a]$,
\[||e^{-i\lambda t}g-S^{*}_{t}g||^{2}<\epsilon^{2}.\]

Suppose $\clf=\{\xi_1, \xi_2 , \cdots , \xi_{r}\}$. We can assume without loss of generality that $\clf$ is orthonormal.  
 Let $U_{1}$ be a unitary  from $L^{2}(\bbr_{+})\otimes \clk$ onto $\displaystyle \oplus_{n=1}^{\infty}L^{2}(\bbr_{+})$ such that $U_1\xi_j$ lies in the $j$-th summand. We identify $L^{2}(\bbr_{+})\otimes \clk$ with $$\cll:=\oplus_{n=1}^{\infty}L^{2}(\bbr_{+})$$ under this unitary, and $\xi_{j}$ with $g_j:=U_{1}\xi_{j}$ for $j=1,2,\cdots ,r$. Let $\{S^{(n)}_{t}\}_{t\geq 0}$ denote the shift semigroup on the $n$-th summand of $\cll$, and  let
 \[S^{'}_{t}:=\bigoplus_{n=1}^{\infty} S^{(n)}_{t}.\] for $t\in\rp$.

For $j=1,2,\cdots ,r$, let $W_{j}:L^{2}(\bbr_{+})\to L^{2}(\bbr_{+})$ be a unitary such that $W_{j}g_{j}=g$. Let $W:=\bigoplus_{n=1}^{\infty}W_j$, where $W_{j}:=1_{L^{2}(\bbr_{+})}$ for $j\geq r+1$. Then, \[\max_{j\in\{1,2,\cdots ,r\}}(\sup_{t\in[0,a]}(||(W^{*}S^{'}_{t}W-e^{i\lambda t}1_{\cll})g_{j}||))=
\sup_{t\in K}(||(S_{t}-e^{i\lambda t}1_{L^{2}(\bbr_{+})})g||)<\epsilon.\] Similarly,
\[\max_{j\in\{1,2,\cdots ,r\}}(\sup_{t\in[0,a]}(||W^{*}S^{'*}_{t}W-e^{-i\lambda t}1_{\cll})g_{j}||))=||(S_{t}^{*}-e^{-i\lambda t}1_{L^{2}(\bbr_{+})})g||<\epsilon\] Since $S^{'}$ is unitarily equivalent to $\{S_{t}\otimes 1_{\clk}\}_{t\geq 0}$, the proposition is proved.
\end{proof}

\section{Pathologies when $d=\infty$}
In this section, we indicate the pathologies that occur when $d=\infty$. Let \[
\mathbb{R}^{\infty}:=\{(x_n) \in \bbr^\bbn: x_n=0 \textrm{~eventually}\}.
\]
We consider $\mathbb{R}^{\infty}$ as the inductive limit $\displaystyle \lim_{n}(\bbr^n,\iota_n)$, where the connecting map $\iota_n:\bbr^n \to \bbr^{n+1}$ is given by $(x_1,x_2,\cdots,x_n) \to (x_1,x_2,\cdots,x_n,0)$. Recall that a subset $K \subset \bbr^{\infty}$ is compact if and only if there exists $n \geq 1$ such that $K$ is a compact subset of $\bbr^{n}$. After an abuse of notation, we view $\bbr^n$ as a subset of $\bbr^\infty$.
Let $\bbr_{+}^{\infty}:=\{(x_n) \in \bbr^{\infty}:x_n \geq 0\}$. Note that $\bbr_{+}^{\infty}$ is a closed subsemigroup of $\bbr_{+}^{\infty}$. Let $e_i=(0,0,\cdots,1,0,\cdots)$, where $1$ occurs at the $i$-th position. 

As in the finite case ($d<\infty$), an isometric representation $V=\{V_{\underline{t}}\}_{\underline{t} \in \bbr_{+}^{\infty}}$ (which is always assumed to be strongly continuous) is said to be \textbf{doubly commuting} if \[V_{te_i}^{*}V_{se_j}=V_{se_j}V_{te_i}^{*}\] for every $s,t \geq 0$ and $i \neq j$. For an isometric representation $V=\{V_{\underline{t}}\}_{\underline{t} \in \bbr_{+}^{\infty}}$ and for $i \in \bbn$, we set 
\[
V_{t}^{(i)}=V_{te_i}
\]
for $t\geq 0$. 

We show some pathological properties concerning the structure of isometric representations, even the doubly commuting ones, of $\bbr_{+}^{\infty}$. In particular, we show that $\mathcal{R}_0(\bbr_{+}^{\infty})$ is not $T_0$ and also prove that  Wold decomposition fails. First, we show that $\mathcal{R}(\bbr_+^{\infty})$ is not $T_0$  by exhibiting two distinct points in $\clr_{0}(\rp^{\infty})$ which have the same closure in $\clr_{0}(\rp^{\infty})$.   In the process, we also construct a continuum of strongly pure, irreducible, doubly commuting isometric representations of $\bbr_{+}^{\infty}$. This is in contrast to the finite case, where Cooper's theorem states that there is only one strongly pure, irreducible, doubly commuting isometric representation

 Let us first fix some notation. 
 Let $\gamma$ be the standard Gaussian distribution on $\bbr$, i.e. $\gamma$ is the probability measure on $\bbr$ given by
\[\gamma(E)=\frac{1}{\sqrt{2\pi}}\int_{E}e^{\frac{-x^{2}}{2}}dx.\] Let $\mu$ be the product measure $\displaystyle \otimes_{n=1}^{\infty}\gamma$ on $\bbr^\bbn$. 
For $\underline{x} \in \bbr^n$, the map $\bbr^n \ni \underline{y} \to \underline{y}+\underline{x} \in \bbr^n$ will be denoted by $T_{\underline{x}}^{(n)}$. For $\underline{x} \in \bbr^\bbn$, let $T_{\underline{x}}:\bbr^\bbn \to \bbr^\bbn$ be defined by 
\[
T_{\underline{x}}(\underline{y})=\underline{x}+\underline{y}.
\]
In what follows, we consider $\bbr^\bbn$ as a measure space endowed with the probability measure $\mu=\otimes_{k=1}^{\infty}\gamma$.  The Lebesgue measure on $\bbr^n$ will be denoted by the same letter $\lambda$ for every $n$. For $n \geq 1$, let $\mu^{(n)}=\bigotimes_{k=1}^{n}\gamma$ which is a probability measure on $\bbr^n$. 

\begin{remark}
For $\underline{x} \in \bbr^\bbn$, the measure $\mu$ is quasi-invariant for $T_{\underline{x}}$, i.e. $\mu \circ T_{\underline{x}}^{-1}$ and $\mu$ are absolutely continuous w.r.t. each other if and only if $\underline{x} \in \ell^2(\bbn)$. This is well known and can be proved by applying Prop. III-1-2. of \cite{Neveu}.
\end{remark}

Let $X$ denote the set of all sequences $(a_{n})_{n\in\bbn}\in \bbr^{\bbn}$ such that $a_{n}\leq 0$ for each $n\in\bbn$ and
\[{\prod}_{n=1}^{\infty}\frac{1}{\sqrt{2\pi}}\int_{a_{n}}^{\infty}e^{\frac{-x^{2}}{2}}dx>0.\]
\begin{lemma} The set $X$ is non-empty.
\end{lemma}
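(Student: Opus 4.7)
The goal is just to exhibit one sequence $(a_n)_{n \in \bbn}$ with $a_n \leq 0$ for which the infinite product
\[
\prod_{n=1}^{\infty} p_n, \quad \text{where } p_n := \frac{1}{\sqrt{2\pi}} \int_{a_n}^{\infty} e^{-x^2/2}\,dx,
\]
is strictly positive. Since $a_n \leq 0$ implies $p_n \in [\tfrac{1}{2}, 1]$, I will use the standard fact that for a sequence $(p_n) \subset (0,1]$ the infinite product $\prod p_n$ converges to a positive limit if and only if $\sum_{n=1}^{\infty} (1 - p_n) < \infty$. Thus the problem reduces to choosing $(a_n)$ with $a_n \leq 0$ such that
\[
\sum_{n=1}^{\infty} q_n < \infty, \qquad q_n := 1 - p_n = \frac{1}{\sqrt{2\pi}} \int_{-\infty}^{a_n} e^{-x^2/2}\,dx.
\]

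\noindent The main tool is the classical Gaussian tail estimate: for every $t > 0$,
\[
\frac{1}{\sqrt{2\pi}} \int_{-\infty}^{-t} e^{-x^2/2}\,dx \;\leq\; \frac{1}{t\sqrt{2\pi}} e^{-t^2/2},
\]
which follows by bounding $e^{-x^2/2}$ against $\frac{|x|}{t} e^{-x^2/2}$ on $(-\infty, -t]$ and integrating explicitly. Applying this with $t = n$, I take $a_n := -n$; then
\[
q_n \;\leq\; \frac{1}{n \sqrt{2\pi}} e^{-n^2/2},
\]
and the right-hand side is manifestly summable (comparison with a geometric series, say). Hence $\sum_n q_n < \infty$, so $\prod_n p_n > 0$, so $(a_n) \in X$ and $X \neq \emptyset$.

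\noindent The entire argument is routine; there is no real obstacle. The only thing to be slightly careful about is the equivalence between positivity of $\prod p_n$ and summability of $1 - p_n$, which I would briefly justify via $\log p_n = \log(1 - q_n)$ and the elementary inequalities $-2 q_n \leq \log(1 - q_n) \leq -q_n$ valid for $q_n \in [0, 1/2]$ (which holds here since $a_n \leq 0$ forces $q_n \leq 1/2$, and in fact $q_n \to 0$ for our choice). No additional machinery is needed.
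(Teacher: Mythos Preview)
Your proof is correct. Both you and the paper reduce to showing $\sum \log p_n$ converges (equivalently $\sum (1-p_n)<\infty$), but the constructions differ: the paper first prescribes target values $t_n = e^{t^{2n+1}}\in(\tfrac12,1)$ for which $\sum\log t_n$ is a convergent geometric-type series, and then invokes the intermediate value theorem to choose $a_n<0$ with $p_n=t_n$; you instead fix the explicit sequence $a_n=-n$ and use the standard Gaussian tail bound $\frac{1}{\sqrt{2\pi}}\int_{-\infty}^{-n}e^{-x^2/2}\,dx\le \frac{1}{n\sqrt{2\pi}}e^{-n^2/2}$ to see directly that $\sum(1-p_n)<\infty$. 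Your route is more concrete (one sees an explicit element of $X$) and avoids the IVT step; the paper's route avoids the tail estimate but is less explicit. Either way the lemma is routine.
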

\begin{proof}It suffices to exhibit a sequence $a=(a_{n})\in\bbr^{\bbn}$ with $a_{n}<0$ for each $n\in\bbn$ such that
\[\sum_{n=1}^{\infty}\log\Big(\frac{1}{\sqrt{2\pi}}\int_{a_{n}}^{\infty} e^{\frac{-x^{2}}{2}}dx\Big)\] converges.

Note that if $t<0$, then $\frac{1}{2}<\frac{1}{\sqrt{2\pi}}\int_{t}^{\infty} e^{\frac{-x^{2}}{2}}dx<1$. Let $-\log2<t<0$. For each $n\in\bbn$, let $t_{n}:=e^{t^{2n+1}}$. Then, $\frac{1}{2}<t_{n}<1$, and $\sum_{n=1}^{\infty}\log(t_{n})$ converges.
For $n\in\bbn$, choose $a_{n}<0$ such that \[\frac{1}{2}<\frac{1}{\sqrt{2\pi}}\int_{a_{n}}^{\infty} e^{\frac{-x^{2}}{2}}dx=t_{n}.\]
Such a choice is possible since the function
\[s \to \frac{1}{\sqrt{2\pi}}\int_{s}^{\infty} e^{\frac{-x^{2}}{2}}dx \] is continuous. Now, it is clear that the series
\[\sum_{n=1}^{\infty}\log(\frac{1}{\sqrt{2\pi}}\int_{a_{n}}^{\infty} e^{\frac{-x^{2}}{2}}dx)=\sum_{n=1}^{\infty} t_{n}\] converges. This completes the proof.
\end{proof}

Let $a:=(a_1,a_2,\cdots) \in X$. Let \[A:=\prod_{n=1}^{\infty}[a_n,\infty)=\{(x_{n})\in\rn: \textrm{ $x_{n}\geq a_{n}$}\}.\] Then, $A$ has positive measure.  We consider $L^2(A)$ as a subspace of $L^2(\bbr^\bbn)$. For $n \geq 1$, we let 
\begin{align*}
    A_{n]}:&=\{\underline{x} \in \bbr^n: x_i \geq a_i \textrm{~for all $i$}\}, \textrm{~and} \\
    A_{(n}&=\{\underline{x} \in \bbr^\bbn: x_1 \geq a_{n+1},x_2 \geq a_{n+1},\cdots\}.
\end{align*}

For $\underline{x}=(x_{1},x_{2},\cdots , x_{k},\cdots)\in\rpinf$, define $T_{\underline{x}}: \bbr^{\bbn}\to \bbr^{\bbn}$ by
\[T_{\underline{x}}(y_{1},y_{2},\cdots, y_{k},y_{k+1},\cdots ):= (y_{1}+x_{1},y_{2}+{x_{2}},\cdots, y_{k}+x_{k},\cdots ).\]
Note that $T_{\underline{x}}$ is a measurable, invertible map and the measure $\mu$ is quasi-invariant for $T_{\underline{x}}$. 
Consider the Koopman operator $U_{\underline{x}}:L^{2}(\bbr^{\bbn})\to L^{2}(\bbr^{\bbn}) $  defined by
\[U_{\underline{x}}f=\sqrt{\frac{d(\mu \circ T_{\underline{x}}^{-1})}{d\mu}}f\circ T_{\underline{x}}^{-1}.\]

Since $A+\bbr_{+}^{\infty} \subset A$, $U_{\underline{x}}$ leaves $L^2(A)$ invariant for every $\underline{x} \in \bbr_{+}^{\infty}$. For $\underline{x} \in \bbr_{+}^{\infty}$,
let $V^{A}_{\underline{x}}$ denote the restriction of $U_{\underline{x}}$ to $L^{2}(A)$.
Then, $V^{A}:=\{V^{A}_{\underline{x}}\}_{\underline{x}\in\rpinf}$ is a doubly commuting isometric representation of $\rpinf$ on $L^{2}(A)$. Note that $V^{A}$ is strongly pure, i.e. for every $i$, $V_{te_i}^{A*} \to 0$ as $t\to \infty$ in SOT. 
For $\underline{x}=(x_{1}, x_{2}, \cdots , x_{k},0,0,\cdots)\in\rpinf$, $V^{A}_{\underline{x}}$ is given by
\[V^{A}_{\underline{x}}f(\underline{y})= \begin{cases}\sqrt{\displaystyle{\prod}_{n=1}^{k}e^{\frac{2x_{k}y_{k}-x_{k}^{2}}{2}}}f(y_{1}-x_{1}, y_{2}-x_{2},\cdots, y_{k}-x_{k}, y_{k+1}, \cdots), & \textrm{ if $\underline{y}-\underline{x}\in A$},\\
0, & \textrm{otherwise.}
\end{cases}\]
\begin{prop}
\label{irred_infinity}With the foregoing notation, the isometric representation $V^{A}$ is irreducible.
    
\end{prop}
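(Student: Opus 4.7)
The plan is to extend the argument of Lemma~\ref{commutant} to the present infinite-dimensional Gaussian setting. Let $T$ lie in the commutant $\{V^{A}_{\underline{x}}, V^{A*}_{\underline{x}} : \underline{x} \in \bbr_{+}^{\infty}\}'$; I will show $T$ is a scalar in three steps.

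\textbf{Step 1 (Reduction to a multiplication operator).} For each $i \in \bbn$ and $t \geq 0$, direct inspection of the formula for $V^{A}_{te_i}$ shows that its range equals $\{g \in L^{2}(A) : g = 0 \text{ on } \{y_i < a_i + t\}\}$, so the range projection $E_{t,i} := V^{A}_{te_i} V^{A*}_{te_i}$ is the multiplication operator $M_{1_{\{y_i \geq a_i + t\}}}$. Since $T$ commutes with every $E_{t,i}$, and since the half-cylinders $\{y_i \geq a_i + t\}$, $t \geq 0$, $i \in \bbn$ generate (via finite intersections, complements within $A$, and monotone limits) the product Borel $\sigma$-algebra of $A$ modulo $\mu|_A$-null sets, the von Neumann algebra generated by $\{E_{t,i}\}$ is all of $L^{\infty}(A)$ acting as multiplications. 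Hence $T = M_{\phi}$ for some $\phi \in L^{\infty}(A)$.

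\textbf{Step 2 (Coordinatewise translation invariance).} The intertwining $M_{\phi} V^{A}_{te_i} = V^{A}_{te_i} M_{\phi}$ combined with the explicit Koopman formula yields $\phi(\underline{y}) = \phi(\underline{y} - t e_i)$ for $\mu$-a.e.\ $\underline{y} \in A$ with $y_i \geq a_i + t$. Letting $t$ vary over $(0,\infty)$ shows that, for $\mu$-a.e.\ choice of the remaining coordinates, the slice $y_i \mapsto \phi(\underline{y})$ is essentially constant on $[a_i, \infty)$. Therefore, up to a null set, $\phi$ is measurable with respect to $\Sigma_i := \sigma(y_j : j \neq i)$ for every $i \in \bbn$.

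\textbf{Step 3 (Tail triviality).} Observe that $(A, \mu|_A / \mu(A))$ is a product of independent probability spaces, the $n$-th factor being $\gamma|_{[a_n,\infty)}$ suitably normalized; by Kolmogorov's $0$-$1$ law, the tail $\sigma$-algebra $\mathcal{T} := \bigcap_{N \geq 1} \sigma(y_j : j > N)$ is trivial modulo null sets. The principal task, which I expect to be the main obstacle, is to verify that $\phi$ is in fact $\mathcal{T}$-measurable. This does not follow from a single Fubini application but requires iterating Step 2: the case $i=1$ gives $\phi = f_1(y_2, y_3, \ldots)$ a.e.; combining with the case $i=2$ and a Fubini argument on the remaining factors yields $\phi = f_2(y_3, y_4, \ldots)$ a.e.; continuing inductively, $\phi$ is essentially a function of $(y_j)_{j > N}$ for every $N$, hence is $\mathcal{T}$-measurable. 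Consequently $\phi$ is $\mu|_A$-a.e.\ constant, and $T$ is a scalar. This establishes that $\{V^{A}_{\underline{x}}, V^{A*}_{\underline{x}}\}' = \bbc$, i.e.\ $V^{A}$ is irreducible.
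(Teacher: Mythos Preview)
Your argument is correct, and it reaches the same endpoint (Kolmogorov's zero--one law) as the paper, but the route is genuinely different. The paper never computes the range projections $V^{A}_{te_i}V^{A*}_{te_i}$ directly; instead, for each $k$ it factorises $L^{2}(A)\cong L^{2}(\bbr_{+}^{k})\otimes L^{2}(A_{(k})$ so that $V^{A}_{\underline t}$ becomes $S_{\underline t}\otimes 1$ for $\underline t\in\bbr_{+}^{k}$, and then invokes the irreducibility of the finite-dimensional shift (Lemma~\ref{commutant}) to force $T=1\otimes T_{k}$. This single manoeuvre simultaneously yields that $T$ commutes with all cylinder multiplications \emph{and} that the resulting $L^{\infty}$-function is independent of the first $k$ coordinates, so tail-measurability drops out without your iterative Fubini step. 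Your approach, by contrast, is more elementary and self-contained: identifying $E_{t,i}=M_{1_{\{y_i\geq a_i+t\}}}$ and observing that these generate $L^{\infty}(A)$ bypasses the tensor machinery and the appeal to Lemma~\ref{commutant} entirely, at the cost of having to argue translation invariance and tail-measurability coordinate by coordinate. Either method works; the paper's is more structural, yours is closer to a direct extension of the one-variable proof in Lemma~\ref{commutant}.
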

\begin{proof}
We denote $V^{A}_{\underline{t}}$ by $V_{\underline{t}}$. Denote the commutant of $\{V_{\ut}, V_{\ut}\}_{\ut\in\rpn}$ by $\mathcal{M}(V^{A})$.  For $\phi \in L^{\infty}(A)$, let $M_\phi$ be the multiplication operator on $L^2(A)$ defined by 
\[
M_{\phi}(\xi)=\phi \xi.
\]
Suppose $T\in \mathcal{M}(V^A)$.

\textit{Claim:} The operator $T$ commutes with $M_{\phi}$ for every $\phi \in L^{\infty}(A)$.

Note that the $\sigma$-algebra of  $\bbr^\bbn$ is generated by cylindrical sets of the form $\prod_{n\in\bbn} X_{n}$, where each $X_{n}$ is an interval in $\bbr$, and $X_{n}=\bbr$ except for finitely many $n\in\bbn$. Thus, it suffices to prove that $T$ commutes with the multiplication operators corresponding to the indicator functions of these cylindrical sets.

 Consider a cylindrical set of $L^{2}(\bbr^\bbn)$ of the form $X:=\prod_{n\in\bbn}X_{n}$,  where each $X_{n}$ is an interval in $\bbr$ and $X_{n}=\bbr$ except for finitely many $n$.  Let $k$ be such that $X_n=\bbr$ for $n \geq k+1$. % , and let $X_i:=[a_i,b_i]$ Without loss of generality, assume that $X_{n}=[0,a_{n}]$ for $n\leq k$ for $a_{n}>0$ aand $X_{n}=\rp$ for $n\geq k+1$. 
  Let $M_{X}$ denote the multiplication operator corresponding to the indicator function of $X$. 

Let $\displaystyle \mu^{(k)}=\otimes_{n=1}^{k}\mu$. We may identify $L^{2}(\bbr_{+}^{k}, \mu^{(k)})\otimes L^{2}(A_{(k},\mu)$ with $L^{2}(A,\mu)$ via the unitary $U:L^{2}(\bbr_{+}^{k},\mu^{(k)})\otimes L^{2}(A_{(k},\mu)\to  L^{2}(A,\mu)$ defined by 
\[
U(f \otimes g)(x_1,x_2,\cdots)=f(x_1-a_1,x_2-a_2,\cdots,x_n-a_k)g(x_{k+1},x_{k+2},\cdots).
\]
Let $W:L^{2}(\bbr_{+}^k, \mu^{(k)}) \to L^{2}(\bbr_{+}^k,\lambda)$ be the unitary defined by 
\[
Wf=\sqrt{\frac{d\mu^{(k)}}{d\lambda}}f.
\]
Set $W_0=(W \otimes 1)U^{*}$. 
Then, for each $\ut\in\rp^{k}$, 
\[W_0V^{A}_{\ut}W_0^{*}=S_{\ut}\otimes 1.\]
Since $\{S_{\ut}\}_{\ut\in\rp^{k}}$ is an irreducible isometric representation (Lemma \ref{commutant}), \begin{equation}
    \label{tail}
    W_0TW_0^*=1\otimes T_{k}
\end{equation} for some $T_{k}\in B(L^{2}(A_{(k},\mu))$. Note that 
\begin{equation}
    \label{initial}
    W_0M_XW_0^{*}=M_{\phi}\otimes 1
\end{equation}
for some $\phi \in L^{\infty}(\bbr_{+}^k)$. 
It follows from Eq. \ref{tail} and Eq. \ref{initial} that  $W_0TW_0^*$ and $W_0M_XW_0^*$ commutes. Hence, $T$ commutes with $M_X$. This proves the claim.

Since $T$ commutes with the multiplication operator $M_{f}$ for every $f\in L^{\infty}(A)$, $T$ must itself be a multiplication operator $M_g$ corresponding to some $g\in L^{\infty}(A)$.
We now prove that $g$ is constant almost everywhere. For $n \in \bbn$, let $\pi_n:A \to \bbr$ be the projection onto the $n$-th coordinate.  
For $n\in\bbn$, let $\clg_{n}$ be the smallest (complete) $\sigma$- algebra  which makes $\pi_{n},\pi_{n+1},\cdots$ measurable. Let \[\mathcal{F}_{\infty}:=\displaystyle \bigcap_{n\in \bbn} \clg_{n}.\] 
 
It follows from Eq. \ref{tail} that given $k \in \bbr$, there exists $g_k \in L^{\infty}(A_{(k})$ such that for almost all $\underline{x} \in \prod_{i=1}^{\infty}[a_i,\infty)$, \[
g(x_1,x_2,\cdots)=g_k(x_{k+1},x_{k+2},\cdots).
\] 
Thus, $g$ is $\mathcal{F}_\infty$-measurable.  By Kolmogorov's zero-one law, $g$ must be constant almost everywhere. In other words, $T=\lambda$ for some $\lambda\in\bbc$. The proof is over.
\end{proof}

Let $a=(a_{n})_{n\in\bbn}, b=(b_{n})_{n\in\bbn} \in X$. Let $A:=\displaystyle \prod_{n=1}^{\infty}[a_n,\infty)$ and $B:=\displaystyle \prod_{n=1}^{\infty}[b_n,\infty)$.
\begin{prop} 
\label{continuum of irrep} The isometric representations $V^{A}$ and $V^{B}$ are unitarily equivalent if and only if $(a_{n}-b_{n})_{n\in\bbn}\in l^{2}(\bbn)$.
\end{prop}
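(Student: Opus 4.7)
The plan is to split into the two implications, with the easy direction producing an explicit translation-type intertwiner and the hard direction reducing to the quasi-invariance dichotomy from the cited remark via measure-algebra duality.

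\textbf{``If'' direction.} Assume $\underline{c} := b - a \in \ell^2(\bbn)$. By the quasi-invariance recorded in the remark preceding the definition of $X$, the series $\sum_k c_k y_k$ converges $\mu$-a.s.\ and
\[
\rho_{\underline{c}}(y) := \frac{d(\mu \circ T_{\underline{c}}^{-1})}{d\mu}(y) = \exp\!\Big(\sum_k c_k y_k - \tfrac{1}{2}\sum_k c_k^2\Big)
\]
is a positive $\mu$-integrable density. Since $T_{-\underline{c}}$ is a bijection of $B$ onto $A$, the formula
\[
Wf(y) := \sqrt{\rho_{\underline{c}}(y)}\, f(y - \underline{c}), \qquad y \in B,
\]
defines a unitary $W : L^2(A) \to L^2(B)$, by a direct change of variable. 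The intertwining $W V^A_{\underline{x}} = V^B_{\underline{x}} W$ for every $\underline{x} \in \bbr_+^\infty$ then follows from the explicit formulas for $V^A_{\underline{x}}$, $V^B_{\underline{x}}$ together with the cocycle identity $\rho_{\underline{c}}(y)\,\rho_{\underline{x}}(y - \underline{c}) = \rho_{\underline{x}}(y)\,\rho_{\underline{c}}(y - \underline{x})$.

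\textbf{``Only-if'' direction.} Assume $W : L^2(A) \to L^2(B)$ is a unitary with $W V^A_{\underline{x}} = V^B_{\underline{x}} W$ for every $\underline{x} \in \bbr_+^\infty$. Taking adjoints, $W$ conjugates the range projection $V^A_{\underline{x}} V^{A*}_{\underline{x}}$ onto $V^B_{\underline{x}} V^{B*}_{\underline{x}}$. A direct computation identifies $V^A_{\underline{x}} V^{A*}_{\underline{x}} = M_{1_{A + \underline{x}}}$ on $L^2(A)$, and similarly on $L^2(B)$. As $\underline{x}$ varies over $\bbr_+^\infty$, the sets $A + \underline{x}$ generate (by an approximation argument on cylinders, analogous to the one used in the proof of Prop.~\ref{irred_infinity}) the full Borel $\sigma$-algebra of $A$, so the von Neumann algebra generated by these range projections equals the maximal abelian subalgebra $L^\infty(A)$. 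Hence $W$ implements a $*$-isomorphism $L^\infty(A) \to L^\infty(B)$, which is necessarily induced by a measure-class isomorphism $\psi : B \to A$ via $W M_\phi W^* = M_{\phi \circ \psi}$. Testing against the one-parameter families $\phi = 1_{\{y_k \geq a_k + t\}}$, $t \geq 0$, pins down $\psi(y)_k = y_k - c_k$ a.e.\ for every $k$, so $\psi = T_{-\underline{c}}$. The measure-class property then forces the pushforward $(T_{-\underline{c}})_*(\mu|_B)$ to be equivalent to $\mu|_A$; but if $\underline{c} \notin \ell^2$, the cited remark gives that $\mu$ and $\mu \circ T_{-\underline{c}}^{-1}$ are mutually singular on $\bbr^\bbn$, so we may pick a Borel $P \subset \bbr^\bbn$ with $\mu(P) = 1$ and $\mu(P + \underline{c}) = 0$; the set $E := P \cap A$ then satisfies $\mu(E) = \mu(A) > 0$ while $(T_{-\underline{c}})_*(\mu|_B)(E) = \mu((P + \underline{c}) \cap B) \leq \mu(P + \underline{c}) = 0$, a contradiction. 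Hence $\underline{c} \in \ell^2$.

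\textbf{Main obstacle.} The ``if'' direction is routine bookkeeping once the Radon--Nikodym density is in hand. The genuine content lies in the ``only-if'' direction, and the key non-trivial step is identifying the von Neumann algebra generated by the cylindrical projections $\{M_{1_{A + \underline{x}}} : \underline{x} \in \bbr_+^\infty\}$ with the full MASA $L^\infty(A)$: one must show that the orthant-like sets $A + \underline{x}$, which shift only finitely many coordinates at a time, still generate the Borel $\sigma$-algebra of $A$. Once this MASA identification is available, reading off the pointwise form $\psi = T_{-\underline{c}}$ from the coordinate-wise intertwinings, and invoking the quasi-invariance dichotomy to force $\underline{c} \in \ell^2$, are essentially automatic.
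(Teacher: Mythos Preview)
Your argument is correct and reaches the same key intermediate step as the paper---that the intertwiner satisfies $W M_\phi W^{*}=M_{\phi\circ T_{-\underline{c}}}$ for every $\phi\in L^{\infty}(A)$---but by a genuinely different mechanism. The paper fixes $n$, uses that both $V^{A}|_{\bbr_{+}^{n}}$ and $V^{B}|_{\bbr_{+}^{n}}$ are unitarily equivalent to the shift $S\otimes 1$ on $L^{2}(\bbr_{+}^{n})\otimes(\textrm{tail})$, and then invokes the irreducibility of $\{S_{\underline{t}}\}$ (Lemma~\ref{commutant}) to force the intertwiner to factor as $U_{2}^{*}U_{1}\otimes W_{1}$; the action on cylindrical multiplication operators is then read off by hand. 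You instead observe $V^{A}_{\underline{x}}V^{A*}_{\underline{x}}=M_{1_{A+\underline{x}}}$, argue that these projections generate the MASA $L^{\infty}(A)$, and appeal to the point-realisation theorem for $*$-isomorphisms of $L^{\infty}$ of standard measure spaces to produce the point map $\psi$, which you then pin down coordinate-wise. Your route is more conceptual and bypasses the explicit tensor bookkeeping, at the price of importing the spatial isomorphism theorem; the paper's route is entirely self-contained, reusing only the tools already built in Section~2 and the proof of Prop.~\ref{irred_infinity}. For the final step the paper computes the Hellinger-type limit from Neveu explicitly to extract $\sum c_{n}^{2}<\infty$, while you derive a contradiction from singularity---note that the remark you cite states only the equivalence criterion, so you are tacitly using the full Kakutani dichotomy (equivalent or singular) that underlies Neveu's Prop.~III-1-2.
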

\begin{proof} Suppose $\underline{c}:=(b_{n}-a_{n})\in l^{2}(\bbn)$. Then, $\mu$ is quasi-invarinat under the translation $T_{\underline{c}}$. Define the Koopman operator $U:L^{2}(\bbr^\bbn) \to L^{2}(\bbr^\bbn)$ by 
\[
Uf=\sqrt{\frac{d(\mu \circ T_{\underline{c}}^{-1})}{d\mu}}f \circ T_{\underline{c}}^{-1}.\]
Note that $U$ is a unitary operator, $U$ maps $L^2(A)$ onto $L^2(B)$ and intertwines $V^A$ and $V^B$. 

Conversely, suppose that $V^A$ and $V^B$ are unitarily equivalent. Let $U:L^2(A) \to L^2(B)$ be a unitary such that $UV^A_{\ut}U^*=V^B_{\ut}$ for every $\ut \in \bbr_{+}^{\infty}$. 

\textit{Claim:} For every $\phi \in L^{\infty}(A)$, \begin{equation}
    \label{twist}
    UM_{\phi}U^{*}=M_{\phi \circ T_{-\underline{c}}}.
\end{equation}
For $n \geq 1$, we identify $L^{\infty}(A_{n]})$ as a subalgebra of $L^{\infty}(A)$ via the map \[L^{\infty}(A_{n]}) \ni \phi \to \phi \circ \pi_{n]} \to L^{\infty}(A).\] Here, $\pi_{n]}$ is the projection onto the first $n$-coordinates. 
Note that $\bigcup_{n=1}^{\infty}L^{\infty}(A_{n]})$ is weak $*$-dense in $L^{\infty}(A)$. Thus, it suffices to verify Eq. \ref{twist} when $\phi \in L^{\infty}(A_{n]})$ for some $n$. 

Let $n \geq 1$ and let $\phi \in L^{\infty}(A_{n]})$ be given. Let $a^{(n)}=(a_1,a_2,\cdots,a_n)$, $b^{(n)}=(b_1,b_2,\cdots,b_n)$ and $c^{(n)}=a^{(n)}-b^{(n)}$. Let $U_1:L^{2}(A_{n]},\mu^{(n)}) \to L^{2}([0,\infty)^{n},\mu^{(n)})$ be the unitary defined by $U_1f(\underline{x})=f(\underline{x}+a^{(n)})$. Let $W:L^{2}([0,\infty)^{n},\mu^{(n)}) \to L^{2}([0,\infty)^{n},\lambda)$ be the unitary defined by \[Wf=\sqrt{\frac{d\mu^{(n)}}{d\lambda}}f.\] Define another unitary $U_2:L^{2}(B_{n]},\mu) \to L^2([0,\infty)^{n},\mu^{(n)})$ by $U_2(f)(\underline{x})=f(\underline{x}+b^{(n)})$. Let $U_3:L^{2}(A_{(n},\mu) \to L^2(B_{(n},\mu)$ be a unitary operator. 

Note that for $\ut \in \bbr_{+}^{n}$, \[
(WU_1 \otimes U_3)V^{A}_{\ut}(WU_1 \otimes U_3)^{*}=S_{\ut} \otimes 1\]
and \[
(WU_2 \otimes 1)UV_{\ut}^{A}U^*(WU_2 \otimes 1)^{*}=(WU_2 \otimes 1)V_{\ut}^{B}(WU_2 \otimes U_3)^{*}=S_{\ut} \otimes 1.\]

%(WU_2 \otimes 1)U(U_1^*W^* \otimes U_3^*)(S_{\ut}\otimes 1)(WU_1 \otimes U_3)(U_2^*W^* \otimes 1)=(WU_2 \otimes 1)UV_{\underline{t}}U^*(WU_2 \otimes 1)^*=(WU_2 \otimes 1)V_{\ut}^{B}(WU_2 \otimes 1)^{*}=S_{\ut}\otimes .\]
%=(WU_2 \otimes 1)V^{B}_{\ut}(WU_2 \otimes 1)^{*}=(WU_2 \otimes 1)U(S_{\ut}\otimes 1)U^{*}(WU_2 \otimes 1). 
%\]
Hence, $W_0:=(WU_1 \otimes U_3)U^{*}(WU_2 \otimes 1)^{*}$ lies in the commutant of $\{S_{\ut} \otimes 1, S_{\ut}^{*} \otimes 1: \ut \in \bbr_{+}^{n}\}$. Hence, $W_0$ is of the form $1 \otimes W_1$ for some unitary operator $W_1:L^2(A_{(n}) \to L^2(B_{(n})$.  This implies $U=U_2^*U_1 \otimes W_1^*U_3$. Calculate as follows to observe that 
\begin{align*}
UM_{\phi}U^{*}&=(U_2^*U_1 \otimes W_1^*U_3)(M_\phi \otimes 1)(U_1U_2^* \otimes U_3^*W_1)\\ 
&=M_{\phi \circ T_{-c^{(n)}}^{(n)}} \otimes 1\\
&=M_{\phi \circ T_{-\underline{c}}}.
\end{align*}
Therefore, Eq. \ref{twist} holds for every $\phi \in \bigcup_{n=1}^{\infty} L^{\infty}(A_{n]})$, and hence it holds for every $\phi \in L^{\infty}(A)$. 

Thus, for every Borel subset $E \subset A$, $UM_{1_{E}}U^{*}=M_{1_{E+\underline{c}}}$. This implies that $\mu(E)=0$ if and only if $\mu(E+\underline{c})=0$. Thus, the push-forward measure $\mu \circ T_{\underline{c}}^{-1}$ and $\mu$ are equivalent. 

It follows from Prop. III-1-2. of \cite{Neveu} that $I:=\displaystyle \lim_{n \to \infty}\int_{B_{n]}}\sqrt{\frac{d(\mu^{(n)} \circ T_{c^{(n)}}^{-1})}{d\mu}} d\mu >0$. Note that \[
\frac{d(\mu^{(n)}\circ T_{c^{(n)}}^{-1})}{d\mu^{(n)}}(\underline{x})=e^{\langle c^{(n)}|\underline{x}\rangle}e^{-\frac{||c^{(n)}||^{2}}{2}}.\]
A routine computation of the integrals involving Gaussian distributions shows that \[
I=\exp\Big(-\frac{\sum_{n=1}^{\infty}c_n^2}{8}\Big)\prod_{n=1}^{\infty}\int_{b_n}^{\infty}e^{-\frac{1}{2}(t+\frac{c_n}{2})^2}. 
\]
Hence, $\underline{c} \in \ell^2(\bbn)$. This completes the proof.
\end{proof}

\begin{theorem}
Let $\clh$ be an infinite dimensional Hilbert space. Let $\underline{a},\underline{b} \in X$ be such that $\underline{a}-\underline{b} \notin \ell^2(\bbn)$. Set $A:=\prod_{n=1}^{\infty}[a_n,\infty)$ and $B:=\prod_{n=1}^{\infty}[b_n,\infty)$. 
 The following statements are true:
\begin{enumerate}[(1)]
\item The singletons $\{[V^{A}\otimes 1_{\clh}]\}$ and $\{[V^{B}\otimes 1_{\clh}]\}$ have the same closure in $\clr_{0}(\rpinf)$, and
\item $\clr_{0}(\rpinf)$ is not a $T_{0}$- space.\end{enumerate}
\end{theorem}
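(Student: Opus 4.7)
The plan is to deduce (2) immediately from (1): by Prop \ref{continuum of irrep}, since $\underline{a}-\underline{b} \notin \ell^2(\bbn)$, the irreducible representations $V^A$ and $V^B$ are unitarily inequivalent, hence so are the corresponding railway representations $V^A \otimes 1_\clh$ and $V^B \otimes 1_\clh$ (by the uniqueness of the underlying irreducible described at the start of Section 3). Assertion (1) then produces two distinct points of $\clr_0(\rpinf)$ with identical closures, which directly contradicts the $T_0$ property.

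For (1), the strategy is to prove the stronger statement that for every $n \geq 1$, the restrictions $V^A \otimes 1_\clh|_{\rp^n}$ and $V^B \otimes 1_\clh|_{\rp^n}$ are unitarily equivalent as representations of the subsemigroup $\rp^n \subset \rpinf$. Granting this, every compact $K \subset \rpinf$ lies inside some $\rp^n$, so any basic Fell neighborhood $B([V^A \otimes 1_\clh], K, \epsilon, F)$ automatically contains $[V^B \otimes 1_\clh]$: the unitary intertwiner supplies a copy $\widetilde{W} \sim V^B \otimes 1_\clh$ with $\widetilde{W}_x = V^A_x \otimes 1$ holding \emph{exactly} on $K$, so the $F$-error vanishes. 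By symmetry each point lies in every open neighborhood of the other, and the closures agree.

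To prove the restricted equivalence I would reuse the identification already used inside Prop \ref{irred_infinity} and Prop \ref{continuum of irrep}. Writing $L^2(A,\mu) \cong L^2(A_{n]},\mu^{(n)}) \otimes L^2(A_{(n},\mu)$, the restriction of $V^A$ to $\rp^n$ acts as (translation on $A_{n]}$) $\otimes 1$. Translation by $a^{(n)}:=(a_1,\ldots,a_n)$ followed by the Gaussian-density unitary $Wf=\sqrt{d\mu^{(n)}/d\lambda}\,f$ converts this to $S_{\ut}\otimes 1$ on $L^2(\rp^n,\lambda)\otimes L^2(A_{(n})$; the same procedure for $B$, followed by tensoring with $1_\clh$, conjugates both restrictions to $S_{\ut}\otimes 1$ on $L^2(\rp^n) \otimes (L^2(A_{(n}) \otimes \clh)$ and on $L^2(\rp^n) \otimes (L^2(B_{(n}) \otimes \clh)$ respectively. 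Any unitary between the two tail factors -- which exists because both are separable infinite-dimensional Hilbert spaces -- produces the desired intertwiner, since the tail unitary need not intertwine anything given that the $\rp^n$-action is trivial on the tail.

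The whole argument is essentially bookkeeping with no serious analytic obstacle; the key point is simply that the ``head'' of the representation is, for each finite $n$, the standard shift-with-infinite-multiplicity picture of Cooper's theorem, independent of whether the ambient set is $A$ or $B$. The administrative issue that the Fell topology is set up on a fixed ambient separable infinite-dimensional Hilbert space is handled by fixing, once and for all, unitary identifications of $L^2(A) \otimes \clh$ and $L^2(B) \otimes \clh$ with that ambient space.
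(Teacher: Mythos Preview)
Your proposal is correct and matches the paper's own proof essentially step for step: the paper also reduces to a fixed $n$ via compactness, conjugates both restrictions to $S_{\ut}\otimes 1$ on $L^2(\bbr_+^n)\otimes(\text{tail}\otimes\clh)$ using the translation-plus-density unitaries, and then absorbs the tail factors into $\clh$ to obtain the intertwiner; part (2) is deduced from (1) via Prop.~\ref{irred_infinity} and Prop.~\ref{continuum of irrep} exactly as you indicate.
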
 
\begin{proof} 
\begin{enumerate}
     \item  Let $\epsilon>0$, $K\subset\rpinf$ be compact. % and $F\subset L^{2}(\rpn)$ be finite. 
    We recall that a compact subset of $\rpinf$ is necessarily finite dimensional, i.e. there exists $n\in\bbn$ such that $K\subset \bbr^{n}_{+}$. We may identify $L^2(A,\mu)$ with $L^{2}(A_{n]},\mu^{(n)})\otimes L^2(\displaystyle A_{(n},\mu)$. Let $a^{(n)}=(a_1,a_2,\cdots,a_n)$, and let $T_{a^{(n)}}:\bbr^n \to \bbr^n$ be the translation given by \[
    T_{a^{(n)}}(\underline{x})=a^{(n)}+\underline{x}.\]  Let 
    $U_{a^{(n)}}:L^{2}(\bbr^n,\mu^{(n)}) \to L^{2}(\bbr^n,\mu^{(n)})$ be the Koopman operator associated with $T_{-a^{(n)}}$. Then, $U_{a^{(n)}} \otimes 1$ maps $L^2(A)$ onto $L^{2}([0,\infty)^{n}\times A_{(n},\mu^{(n)}\otimes \mu)$, and we identify the latter Hilbert space with $L^{2}([0,\infty)^{n},\mu^{(n)})\otimes L^{2}(A_{(n},\mu)$. 
    
    Let $W:L^{2}([0,\infty)^{n},\mu^{(n)}) \to L^{2}([0,\infty)^{n},\lambda)$ be the unitary defined by \[Wf=\sqrt{\frac{d\mu^{(n)}}{d\lambda}}f.\] Set $W_0=(W \otimes 1)(U_{a^{(n)}}\otimes 1)$. A routine verification implies that for $\underline{t} \in \mathbb{R}_{+}^{n}$, 
    \[
    (W_0 \otimes 1)(V_{\underline{t}}^A \otimes 1_{\clh})(W_0 \otimes 1)^{*}=S_{\underline{t}} \otimes 1_{\cll_0},
    \]
    where $\cll_0=L^{2}(A_{(k},\mu)\otimes \clh$. After identifying $\cll_0$ with $\clh$ via a unitary, we see that there exists a unitary operator $W_1$ such that, for $\ut \in \bbr_{+}^{n}$, 
    \[
    W_1(V^A_{\ut}\otimes 1_{\clh})W_1^*=S_{\ut}\otimes 1_{\clh}.
    \]
    A similar argument shows that there exists a unitary $W_2$ such that, for $\ut \in \bbr_{+}^{n}$, 
    \[
    W_2(V^B_{\ut}\otimes 1_{\clh})W_{2}^{*}=S_{\ut} \otimes 1_{\clh}.
    \]
    Setting $W:=W_{1}^{*}W_2$, we see that 
    \[
    W(V^B_{\ut} \otimes 1_{\clh})W^*=V^A_{\ut}\otimes 1_{\clh}\]
    for every $\ut \in K$. 
     It is now clear that every basic open set containing $[V^{A}\otimes 1_{\clh}]$ also contains $[V^{B}\otimes 1_{\clh}]$ and vice versa. In other words, the closures of $\{[V^{A}\otimes 1_{\clh}]\}$ and $\{[V^{B}\otimes 1_{\clh}]\}$ in $\clr_{0}(\rpinf)$ are the same.
\item Thanks to Prop. \ref{irred_infinity} and Prop. \ref{continuum of irrep}, $V^{A}\otimes 1_{\clh}$ and $V^B \otimes 1_{\clh}$ are railway representations that are inequivalent. By (1), 
 $[V^{A}\otimes 1_{\clh}]$ and $[V^{B}\otimes 1_{\clh}]$ cannot be separated by open sets, and hence, it follows that $\clr_{0}(\rpinf)$ is not $T_0$.
\end{enumerate}
\end{proof}
We finish our paper by showing that Wold decomposition fails for doubly commuting isometric representations of  $\bbr_{+}^{\infty}$.
\begin{definition}
    Let $V=\{V_{\underline{t}}\}_{\underline{t} \in \mathbb{R}_+^{\infty} }$ be a strongly continuous, isometric representation of  the semigroup $\mathbb{R}_+^{\infty}$ on a Hilbert space $\mathcal{H}.$ The  representation $V$ is said to admit a Wold decomposition if the Hilbert space $\mathcal{H}$ decomposes as an orthogonal direct sum:
   \[
\mathcal{H} = \bigoplus_{g \in \{0,1\}^{\mathbb{N}}} \mathcal{H}_g,
\]
such that  each  \( \mathcal{H}_g \) is a reducing  subspace  for  $V$, and for each   \(i\in \mathbb{N},  \)  the restriction of the representation $ \{V^{(i)}_t\}_{t \in \bbr_+}$  to $ {\mathcal{H}_g} $  satisfies:

\begin{itemize}
  \item    if \( g(i) = 0 \), then $ \{V^{(i)}_t\}_{t \in \bbr_+}$   on $ {\mathcal{H}_g} $  is a \textbf{unitary} representation,
  \item if \( g(i) = 1 \), then  $ \{V^{(i)}_t\}_{t \in \bbr_+}$  on  $ {\mathcal{H}_g} $ is a \textbf{pure isometric} representation. 
\end{itemize}
\end{definition}

A similar definition applies to isometric representations $V=\{V_{\underline{n}}\}_{\underline{n} \in \mathbb{N}_0^{\infty} }$  of the semigroup $\mathbb{N}_0^{\infty}=\{(m_i): m_i \in \bbn_0, m_i=0 \textrm{~~eventually}\}$.

\begin{prop}
\label{failure of Wold}
There exists a  strongly continuous, doubly commuting isometric representation of the semigroup  $\bbr_{+}^{\infty}$ on a Hilbert space which does not admit a Wold decomposition. 
\end{prop}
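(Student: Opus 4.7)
The idea is to randomize the strongly pure representation $V^{A}$ constructed above using independent Bernoulli coin flips, so that each Wold projection $P_{i}:=\mathrm{SOT}\text{-}\lim_{t\to\infty}V^{(i)}_{t}V^{(i)*}_{t}$ becomes multiplication by the indicator of an independent fair coin. Then every candidate intersection projection $P_{g}$ ($g\in\{0,1\}^{\bbn}$) corresponds to the indicator of a single point in an atomless probability space and is therefore zero, while the ambient Hilbert space is not. Concretely, fix $\underline{a}\in X$, put $A=\prod_{n\geq 1}[a_{n},\infty)$, and let $V^{A}$ be the doubly commuting, strongly pure, strongly continuous isometric representation on $L^{2}(A,\mu)$ constructed in the previous section. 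Let $(\Omega,\nu)=(\{0,1\}^{\bbn},\bigotimes_{n\geq 1}\mathrm{Ber}(1/2))$, an atomless Borel probability space, set $\clh:=L^{2}(\Omega,\nu)\otimes L^{2}(A,\mu)$, and for each $i\in\bbn$ and $t\geq 0$ define
\[V^{(i)}_{t}\;:=\;M_{1_{\{y_{i}=0\}}}\otimes I\;+\;M_{1_{\{y_{i}=1\}}}\otimes V^{A}_{te_{i}}.\]
On the fiber at $y\in\Omega$ this is the identity if $y_{i}=0$ and the pure isometry $V^{A}_{te_{i}}$ if $y_{i}=1$.

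I first verify that $V_{\underline{t}}:=\prod_{i}V^{(i)}_{t_{i}}$ (a finite product since $\underline{t}\in\bbr_{+}^{\infty}$ is eventually zero) defines a strongly continuous, doubly commuting isometric representation on $\clh$. The isometry property and the semigroup law are immediate from the fiberwise description. Strong continuity reduces, via the inductive-limit topology on $\bbr_{+}^{\infty}$, to strong continuity on each $\bbr_{+}^{n}$, which follows from that of $V^{A}$ by dominated convergence. Double commutation across distinct $i\ne j$ is checked by expanding $V^{(i)}_{t}V^{(j)*}_{s}$ and $V^{(j)*}_{s}V^{(i)}_{t}$ into four terms indexed by $(y_{i},y_{j})\in\{0,1\}^{2}$; three of these contain a factor of the identity and commute trivially, while the remaining one reduces to the double commutation of $V^{A}$.

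The heart of the argument is the following computation: since $V^{A}$ is strongly pure in each direction, $V^{A}_{te_{i}}V^{A*}_{te_{i}}\to 0$ in SOT as $t\to\infty$, while $V^{(i)}_{t}V^{(i)*}_{t}$ equals the identity on the fiber $\{y_{i}=0\}$ for every $t$; hence $P_{i}=M_{1_{\{y_{i}=0\}}}\otimes I$. These mutually commuting projections correspond, via the product structure of $\nu$, to independent coin-flip-indicators. Writing $P_{0,i}=P_{i}$ (unitary part) and $P_{1,i}=I-P_{i}$ (pure part), the decreasing SOT-limit of the finite products gives
\[P_{g}\;=\;\lim_{n\to\infty}\prod_{i=1}^{n}P_{g(i),i}\;=\;M_{1_{B_{g}}}\otimes I,\qquad B_{g}=\{y\in\Omega:y_{i}=g(i)\text{ for every }i\in\bbn\},\]
which is a single point of $\Omega$; atomlessness of $\nu$ then forces $P_{g}=0$ for every $g\in\{0,1\}^{\bbn}$.

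Finally, suppose for contradiction that $V$ admits a Wold decomposition $\clh=\bigoplus_{g}\clh_{g}$. On each reducing subspace $\clh_{g}$, by definition $V^{(i)}$ is unitary when $g(i)=0$ and pure when $g(i)=1$; this forces $P_{i}|_{\clh_{g}}=I$ and $P_{i}|_{\clh_{g}}=0$ respectively, so $\clh_{g}\subseteq\mathrm{Ran}(P_{g})=\{0\}$ for every $g$, and hence $\clh=0$, contradicting $\clh\ne 0$. The only delicate point is the justification of the SOT-limits defining $P_{i}$ and $P_{g}$, which follows from the monotone convergence of decreasing nets of commuting projections, applied fiberwise and integrated via dominated convergence.
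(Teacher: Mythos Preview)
Your proof is correct and follows essentially the same strategy as the paper's: both construct a direct integral over $(\{0,1\}^{\bbn},\bigotimes\mathrm{Ber}(1/2))$ in which the $i$-th one-parameter semigroup is pure on the fiber $\{y_i=1\}$ and unitary on $\{y_i=0\}$, and then use atomlessness of the infinite product measure to force each putative Wold summand $\clh_g$ to lie over a single point and hence be zero. Two minor differences are worth noting: you take the unitary piece on the $\{y_i=0\}$ fibers to be the identity (a harmless special case of the paper's choice of an arbitrary unitary representation $U$ doubly commuting with $S$), and in the concluding step you argue directly with the projections $P_i$ and $P_g$, whereas the paper phrases the same containment $\clh_g\subset L^2(\{g\},\clh)$ by invoking the uniqueness part of the finite-$d$ Wold decomposition (Theorem~\ref{WDCV}) for each $d$. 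These are equivalent formulations of the same mechanism.
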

\begin{proof}
  Let $\nu$ be a probability measure on  the set $\{0,1\},$ and consider the infinite   product measure $\mu:=\displaystyle \otimes_{n=1}^{\infty} \nu$ on   $\{0,1\}^{\mathbb{N}}.$  For  each $x \in \{0,1\}^{\mathbb{N}},$ define $\clh(x)=\clh$, where $\mathcal{H}$ is a fixed Hilbert space.  The direct integral of  the  family of Hilbert spaces $\{ \mathcal{H}(x)\}_{x \in \{0,1\}^{\mathbb{N}}}$  with respect to  the measure $ \mu$  is $$\int^{\oplus}H(x)d\mu(x)=L^2(\{0,1\}^{\mathbb{N}}, \mathcal{H}).$$

 Let $S=\{S_{\underline{t}}\}_{\underline{t} \in \mathbb{R}_+^{\infty}}$  be a strongly continuous, strongly pure, doubly commuting isometric representation of $\mathbb{R}_+^{\infty}$ on the Hilbert space $\mathcal{H}$, and let $U=\{U_{\underline{t}}\}_{\underline{t} \in \mathbb{R}_+^{\infty}}$ be a  strongly continuous, unitary  representation of $\mathbb{R}_+^{\infty}$ on  $\mathcal{H}$ such that $U$  doubly commutes with $S.$ This means that for all $s, t \in \mathbb{R}_+,$
$$S_t^{(i)}U_s^{(j)}=U_s^{(j)}S_t^{(i)}  \,\, \textrm{and}\,\,S_t^{(i)*}U_s^{(j)}=U_s^{(j)}S_t^{(i)*}, \hspace{0.8cm} i,j \in \mathbb{N}.$$

Let  $x=(x_i) \in \{0,1\}^{\mathbb{N}}$,  $i \in \mathbb{N}$ and $t \in \bbr_+$. Define an isometry  on $\mathcal{H}(x)$ by
$$ V_{t}^{(i)}(x)=\begin{cases}S_t^{(i)}, & \textrm{ if $x_i=1$,}\\
U_t^{(i)}, & \textrm{if $x_i=0$.} 
\end{cases}$$
 
 Since $ V_{t}^{(i)}(x)$ is an isometry on each fiber  $\mathcal{H}(x),$ the decomposable operator $\int^{\oplus }V_{t}^{(i)}(x)d\mu(x)$  of the $\mu$-measurable operator valued function  $x \to V_{t}^{(i)}(x)$ is an isometry on $L^2(\{0,1\}^{\mathbb{N}}, \mathcal{H})$. We denote it  by $V_{t}^{(i)}.$  Since  $U$  doubly commutes with S,   the resulting semigroup   $V=\{V_{\underline{t}}\}_{\underline{t} \in \mathbb{R}_+^{\infty}} $  constructed from $\{ V_{t}^{(i)}\}_{t \in \bbr_+},  i \in \mathbb{N}$  defines a strongly continuous, doubly commuting isometric representation   of $\mathbb{R}_+^{\infty}$ on the Hilbert space  $L^2(\{0,1\}^{\mathbb{N}}, \mathcal{H}).$ 
 
 For any  $d \in \mathbb{N},$    the restricted family  $V^d:=\{V_{\underline{t}}\}_{\underline{t} \in \bbr_+^d}$ defines a strongly continuous, doubly commuting isometric representation of $\mathbb{R}_+^{d}$ on the Hilbert space   $L^2(\{0,1\}^{\mathbb{N}}, \mathcal{H})$.% associated with the coordinate wise representations  $\{V_{t}^{(i)}\}_{t \in \bbr_+}$ for $ i \in I_d,$ where $I_d \subset \mathbb{N}$ indexes the first d coordinates. 
 
 Applying  the  Wold decomposition theorem (Thm. \ref{WDCV}) to the representation $V^d$ on  the Hilbert space $L^2(\{0,1\}^{\mathbb{N}}, \mathcal{H})$,  we get an orthogonal  decomposition 
 \begin{align}\label{FWD} L^2(\{0,1\}^{\mathbb{N}}, \mathcal{H})= \bigoplus_{g \in \{0,1\}^d } L^2(X_g, \mathcal{H}),\end{align} where $X_g= \{ x \in \{0,1\}^{\mathbb{N}}\, : \, x_i=g(i), i \in I_d \}$ such that  for each  $g \in \{0,1\}^d:$ 
 \begin{enumerate}
 \item $L^2(X_g, \mathcal{H})$ is a  reducing subspace for the representation  $V^d.$ 
 \item For each $i \in I_d,$  the isometric representation   $\{V_{t}^{(i)}\}_{t \in \bbr_+}$  restricted to $L^2(X_g, \mathcal{H})$ is pure if $g(i)=1$     and unitary if $g(i)=0.$
 \end{enumerate}
  Furthermore,  for  every $ i \in I_d$ and $t \in \bbr_+,$ the isometry $V_{t}^{(i)}$ can be expressed as
 \begin{align}V_{t}^{(i)}= \bigoplus _{g \in \{0,1\}^d}V_{t}^{(i)}|_{L^2(X_g, \mathcal{H})}=\bigoplus _{g \in \{0,1\}^d}\int^{\oplus }_{ X_g }V_{t}^{(i)}(x)d\mu(x).\end{align}

Suppose that  the  strongly continuous, doubly commuting isometric representation $V$ over $\mathbb{R}_+^{\infty}$ admits a Wold decomposition, meaning that the Hilbert space $L^2(\{0,1\}^{\mathbb{N}}, \mathcal{H})$ decomposes as
\begin{align}\label{WDI} L^2(\{0,1\}^{\mathbb{N}}, \mathcal{H})= \bigoplus_{g \in \{0,1\}^{\mathbb{N}} } \mathcal{H}_g, \end{align}
where each  $\mathcal{H}_g$ is a reducing subspace of $V$ such that  the representation $\{V_{t}^{(i)}\}_{t \in \bbr_+}$  restricted to  ${\mathcal{H}_g}$ is pure  if $g(i)=1$   and unitary if $g(i)=0.$  

Now,  for $g \in \{0,1\}^{\mathbb{N}}$  and $d \in \mathbb{N}$, define $g_d$ as the restriction of $g$ to the index set  ${I_d} ,$ i.e.   $g_d=g|_{I_d}.$ By the  uniqueness of   Wold decomposition  (Thm. \ref{WDCV})  for  the  doubly commuting isometric representation $V^d$
on $L^2(\{0,1\}^{\mathbb{N}}),$ if follows that  $\mathcal{H}_g \subset  L^2(X_{g_d}, \mathcal{H})$ for all $d \in \mathbb{N}$, where $X_{g_d}$  is defined as in Eq. \ref{FWD}. Since $d \in \mathbb{N}$ is arbitrary,  we have  $\mathcal{H}_g \subset  L^2(X_{g}, \mathcal{H}),$
  where $$X_g= \{x= (x_i) \in \{0,1\}^{\mathbb{N}}\, :\, g(i)=x_i\}=\{(g(i))_{i \in \mathbb{N}}\}.$$   Observe  that $\mu (\{(g(i))_{ i \in \mathbb{N}}\})=0,$ which implies that $L^2(X_g, \mathcal{H})=0,$ and therefore $\mathcal{H}_g=\{0\}.$ This leads to the conclusion that $L^2(\{0,1\}^{\bbn}, \mathcal{H})=\{0\},$ which is a contradiction. Hence, $V$ does not admit a Wold decomposition. 
    %Furthermore , let $g \in \{0,1\}^{\mathbb{N}}$ and  $\mathcal{K}$ be reducing subspace for $V$ such that $V|_{\mathcal{K}}$ is pure if $g(i)=1$ and  is unitary if $g(i)=0.$ By the argument above, $\mathcal{K} \subset L^2(X_g, \mathcal{H}) , $  but  since $L^2(X_g, \mathcal{H})=\{0\},$ it follows  that $\mathcal{K}=\{0\}.$
  \end{proof} 

  \begin{remark}
  Let $\bbn_0^{\infty}=\{(n_i) \in \bbn_0^\bbn: \textrm{$n_i=0$ except for finitely many $i$}\}$.
        Let $S=\{S_{\underline{n}}\}_{\underline{n} \in \mathbb{N}^{\infty}}$  and  $U=\{U_{\underline{n}}\}_{\underline{n} \in \mathbb{N}^{\infty}}$ be  doubly commuting isometric representations of $\mathbb{N}^{\infty}_0$ on the Hilbert space $\mathcal{H}$  such that $S$ is strongly pure, $U$ is unitary and  $U$  doubly commutes with $ S.$ 
  
  For $x=(x_n) \in \{0,1\}^{\mathbb{N}}$  and $n \in \mathbb{N},$ define an isometry  on $\mathcal{H}(x)$ by
$$ V_{i}(x)=\begin{cases}S_i, & \textrm{ if $x_i=1$,}\\
U_i, & \textrm{if $x_i=0$.} 
\end{cases}$$
 Since $ V_{i}(x)$ is an isometry on $\mathcal{H}(x),$ the decomposable operator $\int^{\oplus }V_i(x)d\mu(x)$  of the $\mu$-measurable operator valued function  $x \to V_{i}(x)$ is an isometry on $L^2(\{0,1\}^{\mathbb{N}}, \mathcal{H})$ and it is denoted by $V_i.$ For $\underline{n} \in \bbn_0^{\infty}$, set  $V_{\underline{n}}=\prod_{i=1}^{\infty}V_i^{n_i}$. As $U$  doubly commutes with S, it follows that the  isometric representation $V=\{V_{\underline{n}}\}_{\underline{n} \in \mathbb{N}^{\infty}}$  of $\mathbb{N}^{\infty}$  on the Hilbert space  $L^2(\{0,1\}^{\mathbb{N}}, \mathcal{H})$ is  also doubly commuting. 

Arguing similarly as in Prop. \ref{failure of Wold}, together with the discrete analogue of Thm. \ref{WDCV}, we can show that the doubly commuting isometric representation  $V=\{V_{\underline{n}}\}_{\underline{n} \in \mathbb{N}^{\infty}}$ of $\mathbb{N}^{\infty}$ on $L^2(\{0,1\}^{\mathbb{N}}, \mathcal{H}),$ as defined above,  does not admit a Wold decomposition. 

 \end{remark}

\end{document}